\newtheorem{Def}{Definition}[section]
\newtheorem{Ass}{Assumption}[section]
\newtheorem{Lem}{Lemma}[section]
\newtheorem{Theo}{Theorem}[section]
\newtheorem{Prop}{Proposition}[section]
\newtheorem{Rem}{Remark}[section]
\numberwithin{equation}{section}
\newcommand{\hC}{\mathbb C}
\newcommand{\hD}{\mathbb D}
\newcommand{\hE}{\mathbb E}
\newcommand{\hN}{\mathbb N}
\newcommand{\hP}{\mathbb P}
\newcommand{\hR}{\mathbb R}
\newcommand{\rd}{\mathrm d}
\newcommand{\cC}{\mathcal C}
\newcommand{\cI}{\mathcal I}
\newcommand{\cL}{\mathcal L}
\newcommand{\cO}{\mathcal O}
\newcommand{\cS}{\mathcal S}
\newcommand{\cT}{\mathcal T}
\newcommand{\<}{\langle}
\renewcommand{\>}{\rangle}
\journalname{Potential Analysis}
\begin{document}

%\title{Insert your title here\thanks{Grants or other notes 
%about the article that should go on the front page should be
%placed here.
%% General acknowledgments should be placed at the end of the article.
%}}
%\subtitle{Do you have a subtitle?\\ If so, write it here}
%
%%\titlerunning{Short form of title}        % if too long for running head
\title{Mittag--Leffler Euler integrator and large deviations for stochastic space-time fractional diffusion equations
}

\titlerunning{Mittag--Leffler Euler integrator and large deviations for FSPDEs}

\author{
Xinjie Dai \and
Jialin Hong \and
Derui Sheng\textsuperscript{\,\Letter}
}

\institute{
Xinjie Dai (\email{xinjie@smail.xtu.edu.cn}) \at
School of Mathematics and Statistics, Yunnan University, Kunming 650504, Yunnan, China 
\and
Jialin Hong (\email{hjl@lsec.cc.ac.cn}) \at
LSEC, ICMSEC, Academy of Mathematics and Systems Science, Chinese Academy of Sciences, Beijing 100190, China; \\
School of Mathematical Sciences, University of Chinese Academy of Sciences, Beijing 100049, China 
\and 
Derui Sheng (\email{sdr@lsec.cc.ac.cn}) \at
LSEC, ICMSEC, Academy of Mathematics and Systems Science, Chinese Academy of Sciences, Beijing 100190, China; \\
School of Mathematical Sciences, University of Chinese Academy of Sciences, Beijing 100049, China 
}

\date{Received: date / Accepted: date}
% The correct dates will be entered by the editor

\maketitle

\begin{abstract}
Stochastic space-time fractional diffusion equations often appear in the modeling of the heat propagation in non-homogeneous medium. In this paper, we firstly investigate the Mittag--Leffler Euler integrator of a class of stochastic space-time fractional diffusion equations, whose super-convergence order is obtained by developing a helpful decomposition way for the time-fractional integral. Here, the developed decomposition way is the key to dealing with the singularity of the solution operator. Moreover, we study the Freidlin--Wentzell type large deviation principles of the underlying equation and its Mittag--Leffler Euler integrator based on the weak convergence approach. In particular, we prove that the large deviation rate function of the Mittag--Leffler Euler integrator $\Gamma$-converges to that of the underlying equation. 
%Include keywords and mathematical subject classification numbers as needed. 
\keywords{
Time-fractional SPDE \and 
Mittag--Leffler Euler integrator \and 
error analysis \and 
large deviations \and
rate function 
}
% \PACS{PACS code1 \and PACS code2 \and more}
\subclass{
%MSC code1 \and MSC code2 \and more
60H15 \and 60H35 \and 65M12}
% 60H15 Stochastic partial differential equations
% 60H35 Computational methods for stochastic equations
% 65M12 Stability and convergence of numerical methods
\end{abstract}

\section{Introduction}
\label{sec.Intro}

In this paper, we study the following stochastic space-time fractional diffusion equation driven by fractionally integrated Gaussian noise with $\alpha \in (0,1)$, $\beta \in (0,1]$ and $\gamma \in [0,1]$ (see \cite{Kang2021IMA}):\ 
\begin{align} \label{eq.model}
\partial_t^{\alpha} X(t) + A^{\beta} X(t) = F(X(t)) + \cI {_t^{\gamma}} \dot{W}(t), \qquad \forall\, t \in (0,T] 
\end{align}
with $X(0) = X_0$. Here, $\partial_t^{\alpha}$ denotes the Caputo fractional time derivative defined by (see e.g., \cite{Podlubny1999}) 
\begin{align*}
\partial_t^{\alpha} X(t) = \frac{1}{\Gamma(1-\alpha)} \frac{\partial}{\partial t} \int_0^t (t-s)^{-\alpha} \big( X(s) - X(0) \big) \rd s
\end{align*}
with $\Gamma(\cdot)$ being the Gamma function, $A^\beta$ is the spectral fractional Laplacian with $A:=-\Delta$ being the negative Dirichlet Laplacian operator in some convex polygonal domain $\hD \subset \hR^{d}$ ($d \in \{1,2,3\}$),
and $\cI {_t^{\gamma}}$ is the Riemann--Liouville fractional time integral given by $\cI {_t^{\gamma}} \dot{W}(t) = \frac{1}{\Gamma(\gamma)} \int_0^t (t-s)^{\gamma-1} \dot{W}(s) \rd s$. Besides, the initial datum $X_0$ takes values in $H := L^2(\hD)$, $F:\ H \rightarrow H$ is a Borel measurable mapping, and $\{W(t)\}_{t\in[0,T]}$ denotes a cylindrical $Q$-Wiener process on some complete filtered probability space $(\Omega, \mathscr{F}, \{\mathscr{F}_t\}_{t\in[0,T]}, \hP)$, where the covariance operator $Q:H\to H$ is a bounded, linear, self-adjoint, positive semi-definite operator and not necessarily has finite trace.

\subsection{Physical background}

Stochastic fractional diffusion equations have been extensively applied to the modeling of some complicated noise systems with long memory or long-range correlations, such as the heat conduction in materials with thermal memory subject to noise factors, and particles fluctuating in viscoelastic fluids (see e.g., \cite{Chen2015SPA, MijenaNane2015, ChenHu2019}). In order to clarify the physical motivation of the model \eqref{eq.model}, we give two typical applications here.

Putting $\gamma=1-\alpha$ and applying $\partial_t^{1-\alpha}$ to \eqref{eq.model}, then it follows from $\partial_t^{\gamma}\mathcal I_t^{\gamma}\dot{W}(t)=\dot{W}(t)$ that
\begin{align}\label{eq:FKeq}
\partial_tX(t) + \partial_t^{1-\alpha} A^{\beta}X(t) = \partial_t^{1-\alpha}F(X(t)) + \dot{W}(t), \qquad \forall\, t \in (0,T].
\end{align}
The fractional power $A^{\beta}$ is the infinitesimal generator of some Markov process $Z$, which is obtained by killing the standard Brownian motion $B$ at the first exit time $\tau_{\mathbb{D}}:=\inf\{t>0:B(t)\notin \mathbb D\}$ of $B$ from the domain $\mathbb{D}$ and then subordinating the killed Brownian motion using the $\beta$-stable subordinator $T_t$, namely, 
\begin{align*}
Z(t) =
\begin{cases}
B(T_t),\quad &T_t<\tau_{\mathbb{D}},\\
\Theta,\quad &T_t\ge \tau_{\mathbb{D}},
\end{cases}
\end{align*}
where $\Theta$ is an isolated point serving as a cemetery. The Fokker--Planck equation corresponding to $Z(t)$ is given by (see e.g., \cite{NieDeng2022})
\begin{align*}
\partial_t u(t)+\partial_t^{1-\alpha} A^\beta u(t)+\frac{1}{\Gamma(\alpha)}t^{\alpha-1}A^\beta u(0)=0.
\end{align*}
When $u(0) = 0$ and the population of the particles is affected by the external source term depending on the density of the particles and the external Gaussian noise, we obtain the Fokker--Planck equation \eqref{eq:FKeq}, which corresponds to the model \eqref{eq.model} with $\gamma=1-\alpha$. 

For the general case where $\gamma$ is not necessarily $1-\alpha$, we provide another physical background of studying time-fractional \emph{stochastic partial differential equations} (SPDEs) with fractionally integrated additive noise, which is adapted from \cite{Chen2015SPA}. Let $u(t,x)$, $q(t,x)$ and $\vec{K}(t,x)$ denote the body temperature, energy and flux density, respectively. Then the relations 
\begin{equation*} 
\begin{split}
& \partial_t q(t,x) = -\textup{div} \vec{K}(t,x), \\
& q(t,x) = \sigma u(t,x), \qquad\qquad \vec{K}(t,x) = - \lambda \nabla u(t,x), \quad \sigma,\,\lambda > 0
\end{split}
\end{equation*}
yield the classical heat equation $\sigma \partial_t u = \lambda \Delta u$, which describes the heat propagation in homogeneous medium. For the non-homogeneous media, the relation $q(t,x) = \sigma u(t,x)$ commonly becomes 
\begin{align} \label{model.2}
q(t,x) = \int_0^t k(t-s) u(s,x) \rd s 
\end{align}
with some appropriate kernel $k(t)$. Typically, the power law kernel $t^{-\alpha}$ is chosen to describe that more recent past affects the present more. In some practical environments, external noises cannot be ignored. So it is necessary to consider the extension of \eqref{model.2}, for instance, 
\begin{align} \label{model.3}
q(t,x) = \int_0^t k(t-s) u(s,x) \rd s + \int_0^t \sigma(t-s) \rd W(s).
\end{align}
If $\lambda = 1$, $u(0,x) = 0$, $k(t) = \Gamma(1-\alpha)^{-1} t^{-\alpha}$ and $\sigma(t) = - \Gamma(\gamma + 1)^{-1} t^{\gamma}$, then differentiating \eqref{model.3} in $t$ reveals 
\begin{align*} 
-\textup{div} \vec{K}(t,x) = \frac{\partial}{\partial t} q(t,x) 
=\underbrace{ \frac{1}{\Gamma(1-\alpha)} \frac{\partial}{\partial t} \int_0^t (t-s)^{-\alpha} u(s,x) \rd s}_{=\, \partial_t^{\alpha} u(t,x)} -\underbrace{ \frac{1}{\Gamma(\gamma + 1)} \frac{\partial}{\partial t} \int_0^t (t-s)^{\gamma} \rd W(s)}_{=\, \cI {_t^{\gamma}} \dot{W}(t)},
\end{align*}
since $\frac{1}{\Gamma(\gamma+1)} \int_0^t (t-s)^{\gamma} \rd W(s) 
= \frac{1}{\Gamma(\gamma)} \int_0^t \int_0^{\tau} (\tau-s)^{\gamma-1} \rd W(s) \rd \tau$. 
Thus we can obtain the time-fractional stochastic heat equation $\partial_t^{\alpha} u -\Delta u = \cI {_t^{\gamma}} \dot{W}(t),$ which describes the heat conduction in non-homogeneous medium subject to noises (see e.g., \cite{Jin2019ESAIM, WuYan2020ANM, HuLi2022ANM}).

\subsection{Mathematical statement}
Let $\{(\lambda_k, \phi_k)\}_{k=1}^{\infty}$ be the sequence of eigenpairs of the negative Dirichlet Laplacian $A$ with $0<\lambda_1\leq \lambda_2\leq \cdots\le \lambda_k\leq \cdots$. In fact, $\{\phi_k\}_{k=1}^{\infty}$ forms an orthonormal basis of the separable Hilbert space $(H, \<\cdot, \cdot\>, \| \cdot \|)$. Set $\dot{H}^{\varpi}(\hD)$ or simply $\dot{H}^{\varpi}$ for any ${\varpi} \in \hR$ as a Hilbert space induced by the norm $\| \cdot \|_{\varpi} := \big( \sum_{k=1}^{\infty} \lambda_k^{\varpi} \< \cdot, \phi_k \>^2 \big)^{1/2}$. The spectral fractional Laplacian $A^{\beta}$ is defined by $A^{\beta} \varphi = \sum_{k=1}^{\infty} \lambda_k^{\beta} \<\varphi, \phi_k\>\phi_k$ for $\varphi \in \dot{H}^{2\beta}$. Let $\cL(H)$ be the space of bounded linear operators from $H$ to $H$ equipped with the usual operator norm $\| \cdot \|_{\cL(H)}$. Given separable Hilbert spaces $\mathcal{U}$ and $\mathcal{V}$,
denote by $\cL_2(\mathcal{U},\mathcal{V})$ the space of Hilbert--Schmidt operators $\cO:\ \mathcal{U} \rightarrow \mathcal{V}$ endowed with the Hilbert--Schmidt norm $\| \cO \|_{\cL_2(\mathcal{U},\mathcal{V})} := \big( \sum_{k=1}^{\infty} \| \cO \psi_k \|_{\mathcal{V}}^2 \big)^{1/2}$, where $\{\psi_k\}_{k=1}^\infty$ is an orthonormal basis of $\mathcal{U}$. Besides, we use the abbreviation $\cL_2(H):=\cL_2(H,H)$.

In recent years, there are a plenty of works on the mathematical theory of different types of fractional SPDEs (e.g., see \cite{AlloubaXiao2017, Chen2015SPA, ChenHu2019, ChenHu2022, MijenaNane2015} for the well-posedness and spatio-temporal regularity, and \cite{YanYin2018, Zhang2008JDE} for the \emph{large deviation principle} (LDP)). In order to study the mathematical theory of the model \eqref{eq.model}, we impose the following two assumptions. 

\begin{Ass} \label{ass.Noise}
Let $\alpha \in (0,1)$, $\beta \in (0,1]$, $\gamma \in [0,1]$ with $\alpha + \gamma > \frac{1}{2}$ and the covariance operator $Q$ satisfy 
\begin{align*}
\| A^{\frac{ r-\kappa}{2}} Q^{\frac{1}{2}} \|_{\cL_2(H)} < \infty \qquad \mbox{for some }\ r \in (0,\kappa],
\end{align*}
where $\kappa := \min\{ (\alpha+\gamma-\frac{1}{2})\frac{2\beta}{\alpha}, 2\beta \} - \varepsilon_0$ with $\varepsilon_0 > 0$ being arbitrarily small.
\end{Ass}

\begin{Ass} \label{ass.Lip1}
There exists a positive constant $L$ such that
\begin{align*}
\| F(\phi) \| \leq L( 1 + \| \phi \|) \quad \mbox{and} \quad \| F'(\phi)\psi \| \leq L\|\psi\|, \qquad \forall\, \phi, \psi \in H.
\end{align*}
\end{Ass}
\noindent Then the model \eqref{eq.model} admits a unique mild solution given by (see Theorem \ref{thm.ExisUniq}) 
\begin{align} \label{eq.mildSol}
X(t) = \cS_{1-\alpha}(t) X_0 + \int_0^t \cS_{0}(t-s) F(X(s)) \rd s + \int_0^t \cS_{\gamma}(t-s) \rd W(s), \qquad t \in [0,T].
\end{align}
Here, the solution operator $\cS_{\eta}(t): H \rightarrow H$ with $\eta \in [0,1]$ and $t > 0$ is given by
\begin{align} \label{eq.St}
\cS_{\eta}(t) \psi = t^{\alpha+\eta-1} \sum_{k=1}^{\infty} E_{\alpha,\alpha+\eta} (-\lambda_k^{\beta} t^{\alpha}) \<\psi, \phi_k\> \phi_k, \qquad \mbox{for } \psi \in H, 
\end{align} 
where $E_{a, b}(\cdot)$ with $a \in (0,1)$ and $b \in \hR$ is the Mittag--Leffler function (see Appendix \ref{sec.SolOpera}). For convenience, the mild solution \eqref{eq.mildSol} can also be represented as 
\begin{align} \label{eq.eqivaMild}
X(t) = \cS_{1-\alpha}(t) X_0 + \Upsilon_{F\circ X}(t) + \Lambda(t)
\end{align}
via the following two convolutions 
\begin{align} \label{eq.TwoConvo}
\Upsilon_{F\circ X}(t) := \int_0^t \cS_0(t-s) F(X(s)) \rd s, \qquad 
\Lambda(t) := \int_0^{t} \cS_{\gamma}(t-s) \rd W(s), \qquad \forall\, t \in [0,T].
\end{align}

As shown in Proposition \ref{prop.Regu}, the mild solution $X$ belongs to $L^\infty([0,T], L^p(\Omega,\dot H^r))$ and is H\"older continuous with order $\min\{ \alpha, \frac{\alpha r}{2\beta} + (\gamma-\frac{1}{2})^{+} \}$ in $L^p(\Omega,H)$ for $p\ge2$. These quantitive properties can be also extended to the controlled equation \eqref{eq.controlled-eq} and the skeleton equation \eqref{eq.skeleton-eq} associated with \eqref{eq.model}. On this basis, we obtain in Theorem \ref{th:LDPexact} the Freidlin--Wentzell type LDP of the model \eqref{eq.model} in the continuous function space $\mathcal C([0,T],H)$, by means of the weak convergence approach introduced in \cite{BD00}.

We further study the discrete-time simulations of the model \eqref{eq.model}, in view of the demand for realizing it on computers. A distinctive feature of the model \eqref{eq.model}, compared to the classical SPDE, is that its solution operator $\mathcal S_{\eta}(t)$ does not possess the semigroup property, and this brings additional challenges for developing efficient numerical methods and the related rigorous error analysis for the model \eqref{eq.model}. Let us introduce some existing results in this direction. The Gr\"unwald--Letnikov method and the L1 method as two time-stepping methods are successively studied in \cite{Jin2019ESAIM, WuYan2020ANM, HuLi2022ANM} for the linear case of \eqref{eq.model}. When $\alpha \in (\frac{1}{2},1)$, the model \eqref{eq.model} is also numerically solved in \cite{Kang2021IMA} by the Galerkin finite element method combined with the Wong--Zakai approximation, which mainly focuses on the spatial discretization. We also refer to \cite{Gunzburger2019MC, Kovacs2020SIAM, NieDeng2022} and references therein for the numerical studies of other variants of the model \eqref{eq.model}. Nevertheless, the rigorous error analysis of time-stepping methods of the model \eqref{eq.model} remains far from fully explored especially for the case $\alpha\in(0,\frac12)$ where the mild solution is rather rough.

Let $\{t_m = m h \}_{m = 0}^{M}$ with $h = T/M$ be a uniform partition of $[0,T]$, where $M \geq 2$ is some fixed integer. We discretize the model \eqref{eq.model} by the Mittag--Leffler Euler (MLE) integrator:
\begin{align} \label{eq.NumSol}
Y_m = \cS_{1-\alpha}(t_m) Y_0 + \sum_{j=0}^{m-1} \int_{t_j}^{t_{j+1}} \cS_{0}(t_m-s) F(Y_j) \rd s + \Lambda(t_m),
\end{align}
for $m = 1,2,\cdots,M$ and $Y_0 = X_0$. Notice that the stochastic convolution $\Lambda(\cdot)$ defined by \eqref{eq.TwoConvo} is a fractionally integrable Gaussian process and can be simulated on the grid points with a specified precision. When developing the error analysis of the MLE integrator, two main difficulties we encounter are \textsf{(Q1)}:\ the singularity of the solution operator $\cS_{0}(t)$; and \textsf{(Q2)}:\ the low regularity of the mild solution \eqref{eq.mildSol}. The difficulty \textsf{(Q1)} also appears in the error analysis of the MLE integrator for the finite-dimensional deterministic counterpart of the model \eqref{eq.model}, which has been well overcome in \cite{Garrappa2013} by using the asymptotic expansion $C_{0,0}+C_{0,1} t^\alpha+C_{0,2} t^{2\alpha}+C_{1,0} t + C_{1,1} t^{1+\alpha} + \cdots$ of the true solution. Unfortunately, an asymptotic expansion of this type seems unavailable for the mild solution \eqref{eq.mildSol} of \eqref{eq.model} due to the appearance of the noise source. Instead, we solve \textsf{(Q1)} by developing a useful decomposition way for the singular convolution $\Upsilon_{F\circ X}(\cdot)$ (see Proposition \ref{prop.UpsilonG}). In fact, the developed decomposition way also helps us to establish a higher-order regularity estimate for the mild solution \eqref{eq.mildSol} (see Proposition \ref{th.sinRegu}), which is the key to overcoming \textsf{(Q2)}. Together with the estimates of the stochastic convolution $\Lambda(\cdot)$, we establish the strong error analysis of the MLE integrator \eqref{eq.NumSol}, where the influence of the parameters $\alpha, \, \beta, \, \gamma$ and the regularity of the noise on the convergence rate is revealed. It turns out that the MLE integrator is super-convergent in the sense that its strong convergence order is higher than the temporal H\"older continuity exponent of the mild solution \eqref{eq.mildSol}; see Theorems \ref{th.MLEulerErrorLinear} and \ref{th.MLEulerErrorNonlinear} for the linear and nonlinear cases, respectively. In addition, we prove in Proposition \ref{th:LDPnum} that the Freidlin--Wentzell type LDP in $\mathcal C([0,T],H)$ also holds for the continuified MLE integrator, and in Theorem \ref{th:LDPratenum} that the large deviation rate function of the numerical solution $\Gamma$-converges to that of the exact solution.

The rest of the paper is organized as follows. Section \ref{sec.Prelim} introduces some notations and the regularity estimates of the singular convolution $\Upsilon_{F\circ X}(\cdot)$ and stochastic convolution $\Lambda(\cdot)$. In Section \ref{sec.wellpos}, we give the well-posedness, spatio-temporal regularity and the Freidlin--Wentzell type LDP for the model \eqref{eq.model}. In Section \ref{sec.MainResApp}, we present the strong convergence analysis and Freidlin--Wentzell type LDP for the MLE integrator of the model \eqref{eq.model}.

\section{Convolution estimates} 
\label{sec.Prelim}

Unless otherwise specified, the following notations will be used throughout this paper. Denote $a\vee b:=\max\{a,b\}$ and $a\wedge b:=\min\{a,b\}$ for $a,b\in\hR$, and in particular, $a^{+}: =a\vee 0$. Use $\varepsilon > 0$ to represent a sufficiently small constant and set $\inf\emptyset=\infty$ by convention. Denote by $C$ a generic constant and use $C(\cdot)$ if necessary to mention the parameters it depends on, which may change at each occurrence but are always independent of the stepsize $h$. 

In this section, we are devoted to analyzing the singular convolution 
\begin{align} \label{eq.Upsilon0t}
\Upsilon_G(t) := \int_0^t \cS_0(t-u) G(u) \rd u, \qquad \forall \, t \in [0,T]
\end{align}
and the stochastic convolution $\Lambda(t)$ defined by \eqref{eq.TwoConvo}. Here, $G:\ [0,T] \rightarrow L^p(\Omega,H)$ with some $p \geq 2$ is measurable, and we will take $G = F\circ X$ in Sections \ref{sec.wellpos} and \ref{sec.MainResApp}.

Considering the temporal regularity of $\Upsilon_G$, one usually uses the following decomposition 
\begin{align*} 
\Upsilon_{G}(t)- \Upsilon_{G}(s) 
= \int_{0}^{s} \big( \cS_{0}(t-u) - \cS_{0}(s-u) \big) G(u) \rd u 
+ \int_{s}^{t} \cS_{0}(t-u) G(u) \rd u 
\end{align*}
for $0 < s < t \leq T$.
As a result, under the assumption $\sup_{t \in [0,T]} \|G(t)\|_{L^p(\Omega,H)} < \infty$, by Lemma \ref{lem.Opera}, one obtains that for all $0 < s < t \leq T$, 
\begin{align} \label{eq.Way1Holder}
\left\| \Upsilon_{G}(t)- \Upsilon_{G}(s) \right\|_{L^p(\Omega,H)} 
&\leq \left\| \int_{0}^{s} \big( \cS_{0}(t-u) - \cS_{0}(s-u) \big) G(u) \rd u \right\|_{L^p(\Omega,H)} + \left\| \int_{s}^{t} \cS_{0}(t-u) G(u) \rd u \right\|_{L^p(\Omega,H)} \notag\\
&\leq C \int_{0}^{s} \left\| \cS_{0}(t-u) - \cS_{0}(s-u) \right\|_{\cL(H)} \rd u + C \int_{s}^{t} \left\| \cS_{0}(t-u) \right\|_{\cL(H)} \rd u \notag\\
&\leq C (t-s)^{\alpha}.
\end{align}
The above estimate reveals that the singularity of the solution operator $\cS_{0}$ will apparently affect the regularity of $\Upsilon_G$. In order to obtain a finer regularity estimate for the singular convolution $\Upsilon_G$, we introduce another decomposition way 
\begin{align}\label{eq.decomWay2}
\Upsilon_G(t) &= \underbrace{\int_0^t \cS_0(t-u) G(0) \rd u}_{=:\, \Phi(t)} + \underbrace{\int_0^t \cS_0(t-u) \big( G(u) - G(0) \big) \rd u}_{=:\, \Psi(t)}, \qquad \forall \, t \in [0,T].
\end{align}
As shown in Proposition \ref{prop.UpsilonG}, the advantage of the decomposition way \eqref{eq.decomWay2} lies in avoiding the singularity of $\cS_{0}$. To be specific, $\Phi(t) = \cS_1(t) G(0)$ can be bounded by using the fact that the operator $\cS_1(t)$ is non-singular, while the singularity of $\cS_{0}(t-u)$ in $\Psi(t)$ can be balanced by the H\"older continuity of $G$. Roughly speaking, one can regard the singular convolution $\Upsilon_G(t)$ as a time-fractional integral of order $\alpha$, and Proposition \ref{prop.UpsilonG} essentially indicates that the integral \eqref{eq.Upsilon0t} can lift the regularity of the integrand with the index $\alpha$.

\begin{Prop} 
\label{prop.UpsilonG}
Let $p \geq 2$ and $\Upsilon_G(t)$ be the convolution defined by \eqref{eq.Upsilon0t} with $G$ satisfying $\sup\limits_{t \in [0,T]} \|G(t)\|_{L^p(\Omega,H)} < \infty$. Then for the decomposition \eqref{eq.decomWay2}, one has the following statements$:$
\begin{enumerate}
\item[(1)] For all $0 < s < t \leq T$,
\begin{align*}
\| \Phi(t) - \Phi(s) \|_{L^p(\Omega,H)} \leq C ( t^{\alpha} - s^{\alpha} ).
\end{align*}

\item[(2)] If there exist constants $K > 0$, $\mu < 1$ and $\nu \in (-\alpha,1)$ such that for all $0 < u_1 < u_2 \leq T$,
\begin{align}\label{ass.G}
\|G(u_2) - G(u_1)\|_{L^p(\Omega,H)} \leq K u_1^{-\mu} ( u_2-u_1 )^{\nu},
\end{align}
then for all $0 < s < t \leq T$,
\begin{align*}
\| \Psi(t) - \Psi(s) \|_{L^p(\Omega,H)}
\leq C ( t^{\alpha} - s^{\alpha} ) + C s^{-\mu} (t-s)^{ \min\{\alpha+\nu-\varepsilon, 1\} }.
\end{align*}
As a result, for all $0 < s < t \leq T$,
\begin{align*}
\| \Upsilon_G(t) - \Upsilon_G(s) \|_{L^p(\Omega,H)}
\leq C ( t^{\alpha} - s^{\alpha} ) + C s^{-\mu} (t-s)^{ \min\{\alpha+\nu-\varepsilon, 1\} }.
\end{align*}
\end{enumerate}
Here, all the constants $C>0$ are independent of $t$ and $s$.
\end{Prop}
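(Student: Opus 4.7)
The plan is to treat $\Phi(t) - \Phi(s)$ and $\Psi(t) - \Psi(s)$ separately, leveraging the smoothing estimates $\|\cS_0(v)\|_{\cL(H)} \leq Cv^{\alpha-1}$ and $\|\partial_v \cS_0(v)\|_{\cL(H)} \leq Cv^{\alpha-2}$ from Lemma \ref{lem.Opera}, and to engineer a cancellation that sharpens the naive $(t-s)^\alpha$ bound into the desired $(t^\alpha - s^\alpha)$.

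For statement (1), the key identity is $\Phi(t) = \cS_1(t) G(0)$: by termwise integration of the Mittag--Leffler series \eqref{eq.St} together with the change of variable $v = t - u$,
\begin{align*}
\Phi(t) = \int_0^t \cS_0(t-u) G(0) \rd u = \Big(\int_0^t \cS_0(v) \rd v\Big) G(0) = \cS_1(t) G(0).
\end{align*}
Hence $\Phi(t) - \Phi(s) = \big(\int_s^t \cS_0(v) \rd v\big) G(0)$, and integrating $\|\cS_0(v)\|_{\cL(H)} \leq Cv^{\alpha-1}$ over $[s,t]$ produces the target bound, with the factor $\sup_{t \in [0,T]} \|G(t)\|_{L^p(\Omega,H)}$ absorbed into $C$.

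For statement (2), split $\Psi(t) - \Psi(s) = I_1 + I_2$ with
\begin{align*}
I_2 := \int_s^t \cS_0(t-u)[G(u) - G(0)] \rd u, \qquad I_1 := \int_0^s [\cS_0(t-u) - \cS_0(s-u)][G(u) - G(0)] \rd u,
\end{align*}
and further decompose $G(u) - G(0) = [G(u) - G(s)] + [G(s) - G(0)]$ inside both. The pieces carrying $G(s) - G(0)$ combine across $I_1$ and $I_2$ through the operator identity
\begin{align*}
\int_0^s \big[\cS_0(t-u) - \cS_0(s-u)\big] \rd u + \int_s^t \cS_0(t-u) \rd u = \cS_1(t) - \cS_1(s),
\end{align*}
yielding $[\cS_1(t) - \cS_1(s)][G(s) - G(0)]$, which is bounded by $C(t^\alpha - s^\alpha)$ by part (1) together with the uniform $L^p$-bound on $G$. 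The $G(u) - G(s)$ piece in $I_2$, combined with \eqref{ass.G} at $u_1 = s$ and the Beta identity $\int_s^t (t-u)^{\alpha-1}(u-s)^\nu \rd u = B(\alpha, \nu+1)(t-s)^{\alpha+\nu}$ (finite since $\nu > -\alpha > -1$), contributes $Cs^{-\mu}(t-s)^{\alpha+\nu}$. The $G(s) - G(u)$ piece in $I_1$ (with $u < s$) is controlled by the interpolation bound $\|\cS_0(t-u) - \cS_0(s-u)\|_{\cL(H)} \leq C(t-s)^\theta (s-u)^{\alpha-1-\theta}$, valid for any $\theta \in [0,1]$, combined with $\|G(s) - G(u)\|_{L^p(\Omega,H)} \leq Ku^{-\mu}(s-u)^\nu$: choosing $\theta = \min\{\alpha+\nu-\varepsilon, 1\}$ keeps $\alpha+\nu-\theta > 0$ and $\mu < 1$, so the ensuing integral $\int_0^s (s-u)^{\alpha+\nu-1-\theta} u^{-\mu} \rd u = B(\alpha+\nu-\theta, 1-\mu)\, s^{\alpha+\nu-\theta-\mu}$ is bounded by a constant multiple of $s^{-\mu}$.

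The main technical obstacle is recognising the cancellation that yields the sharp $(t^\alpha - s^\alpha)$ factor: estimating $I_1$ and $I_2$ in isolation only gives $(t-s)^\alpha$, and it is the operator identity above that glues the two $G(s) - G(0)$ contributions into a single $[\cS_1(t) - \cS_1(s)]$ factor to recover the sharper decay. The arbitrarily small $\varepsilon > 0$ in $\min\{\alpha+\nu-\varepsilon, 1\}$ is needed to keep $\alpha+\nu-\theta$ strictly positive so that the above Beta integral converges at $u = s$, absorbing a potential logarithmic factor at the borderline $\alpha+\nu = 1$. The final bound on $\Upsilon_G(t) - \Upsilon_G(s)$ then follows by summing parts (1) and (2) via the decomposition \eqref{eq.decomWay2}.
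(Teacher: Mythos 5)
Your proposal is correct and follows essentially the same route as the paper: the identity $\Phi(t)=\cS_1(t)G(0)$ for part (1), and for part (2) the same three-way decomposition — isolating the $(\cS_1(t)-\cS_1(s))(G(s)-G(0))$ cancellation, the near-diagonal piece on $[s,t]$, and the far-field piece on $[0,s]$ with the interpolated bound on $\cS_0(t-u)-\cS_0(s-u)$. The paper merely performs an extra change of variables $u\mapsto s-u$ with $\tau=t-s$ before splitting, but the resulting estimates are identical to yours.
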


\begin{proof}
(1) By Lemma \ref{lem.Opera}(3),
\begin{align} \label{eq.S1diff0}
\| \cS_{1}(t) - \cS_{1}(s) \|_{\cL(H)}\le C\int_s^t u^{\alpha-1}\rd u
\leq C (t^{\alpha} - s^{\alpha}).
\end{align}
It follows from \eqref{eq.MLder} that
\begin{align*}
\Phi(t) - \Phi(s)
= \int_0^t \cS_0(t-u) G(0) \rd u - \int_0^s \cS_0(s-u) G(0) \rd u
= \big( \cS_1(t) - \cS_1(s) \big) G(0),
\end{align*}
which together with \eqref{eq.S1diff0} indicates
\begin{align*}
\| \Phi(t) - \Phi(s) \|_{L^p(\Omega,H)}
\leq \| \cS_1(t) - \cS_1(s) \|_{\cL(H)} \| G(0) \|_{L^p(\Omega,H)}
\leq C (t^{\alpha} - s^{\alpha}).
\end{align*}

(2) For convenience, denote $\tau := t - s > 0$. Then, by the change of variables,
\begin{align*}
\Psi(t) - \Psi(s)
&= \int_{-\tau}^{s} \cS_0(u+\tau) \big( G(s-u) - G(0) \big) \rd u - \int_{0}^{s} \cS_0(u) \big( G(s-u) - G(0) \big) \rd u \\
&= \underbrace{\int_{-\tau}^{s} \cS_0(u+\tau) \big( G(s) - G(0) \big) \rd u - \int_{0}^{s} \cS_0(u) \big( G(s) - G(0) \big) \rd u}_{=:\, \Psi_1} \\
&\quad + \underbrace{\int_{-\tau}^{0} \cS_0(u+\tau) \big( G(s-u) - G(s) \big) \rd u}_{=:\, \Psi_2} \\
&\quad + \underbrace{\int_{0}^{s} \big( \cS_0(u+\tau) - \cS_0(u) \big) \big( G(s-u) - G(s) \big) \rd u}_{=:\, \Psi_3}.
\end{align*}
It follows from the fact $\Psi_1 = \big( \cS_1(t) - \cS_1(s) \big) \big( G(s) - G(0) \big)$ and \eqref{eq.S1diff0} that
\begin{align*}
\| \Psi_1 \|_{L^p(\Omega,H)}
\leq \| \cS_1(t) - \cS_1(s) \|_{\cL(H)} \| G(s) - G(0) \|_{L^p(\Omega,H)} \leq C (t^{\alpha} - s^{\alpha}).
\end{align*}
Using Lemma \ref{lem.Opera}(1), \eqref{ass.G} and the Beta function $B(a,b) := \int_{0}^{1} u^{a-1} (1-u)^{b-1} \rd u$ for $a,b>0$ shows
\begin{align*}
\| \Psi_2 \|_{L^p(\Omega,H)}
&\leq \int_{-\tau}^{0} \| \cS_0(u+\tau) \|_{\cL(H)} \| G(s-u) - G(s) \|_{L^p(\Omega,H)} \rd u \\
&\leq C \int_{-\tau}^{0} (u+\tau)^{\alpha-1} s^{-\mu} (-u)^{\nu} \rd u \\
&\leq C s^{-\mu} \tau^{\alpha+\nu}.
\end{align*}
By \eqref{ass.G}, Lemma \ref{lem.Opera}(3) and the estimate $\int_{u}^{u+\tau} \omega^{\alpha-2} \rd \omega \leq C u^{\varepsilon-1-\nu} \tau^{ \min\{\alpha+\nu-\varepsilon, 1\} }$ with $u \in (0,T]$ and $\nu \in (-\alpha,1)$,
\begin{align*}
\| \Psi_3 \|_{L^p(\Omega,H)}
&\leq \int_{0}^{s} \| \cS_0(u+\tau) - \cS_0(u) \|_{\cL(H)} \| G(s-u) - G(s) \|_{L^p(\Omega,H)} \rd u \\
&\leq C \int_{0}^{s} \int_{u}^{u+\tau} \omega^{\alpha-2} \rd \omega \, (s-u)^{-\mu} u^{\nu} \rd u \\
&\leq C s^{\varepsilon-\mu} \tau^{ \min\{\alpha+\nu-\varepsilon, 1\} }.
\end{align*}
Therefore, by collecting these estimates at hand, we have
\begin{align*}
\| \Psi(t) - \Psi(s) \|_{L^p(\Omega,H)}\le \sum_{i=1}^3\| \Psi_i \|_{L^p(\Omega,H)} 
\leq C ( t^{\alpha} - s^{\alpha} ) + C s^{-\mu} (t-s)^{ \min\{\alpha+\nu-\varepsilon, 1\} }.
\end{align*}
The proof is completed. \qed
\end{proof}

\begin{Prop} \label{prop.Lambda}
Let $\Lambda(t)$ be the stochastic convolution defined by \eqref{eq.TwoConvo}. If Assumption \ref{ass.Noise} holds, then for any $\rho \in [r-2\kappa, r]$ and $p \geq 2$, there exists some positive constant $C$ such that for all $0 < s < t \leq T$, 
\begin{align*}
 \| \Lambda(t) \|_{L^p(\Omega,\dot{H}^{\rho})}
\leq C, \qquad
 \| \Lambda(t) - \Lambda(s) \|_{L^p(\Omega,\dot{H}^\rho)} \leq C (t-s)^{\min\{\alpha+\gamma-\frac{1}{2}-\frac{\alpha}{2\beta} (\kappa- r+\rho)^{+}, 1-\varepsilon\}}. 
\end{align*}
\end{Prop}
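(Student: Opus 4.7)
The plan is to exploit that $\Lambda(t)$ is a Gaussian random variable, so that all $L^p(\Omega)$-norms are controlled by the $L^2(\Omega)$-norm up to a $p$-dependent constant, and then invoke It\^o's isometry to reduce both claims to deterministic bounds on the Hilbert--Schmidt norm of $A^{\rho/2}\cS_\gamma(\tau)Q^{1/2}$ (and, for the H\"older part, of its time increment). The key algebraic step is the factorization
\begin{align*}
\|A^{\rho/2}\cS_\gamma(\tau)Q^{1/2}\|_{\cL_2(H)}\leq \|A^{(\rho-r+\kappa)/2}\cS_\gamma(\tau)\|_{\cL(H)}\cdot\|A^{(r-\kappa)/2}Q^{1/2}\|_{\cL_2(H)},
\end{align*}
which is valid because $A^{(\rho-r+\kappa)/2}$ and $\cS_\gamma(\tau)$ are both spectral functions of $A$ and therefore commute. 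The second factor is finite by Assumption \ref{ass.Noise}. For the first, Lemma \ref{lem.Opera} supplies $\|A^\theta\cS_\gamma(\tau)\|_{\cL(H)}\leq C\tau^{\alpha+\gamma-1-\alpha\theta/\beta}$ when $\theta\geq 0$, while for $\theta<0$ one uses $\|A^\theta\|_{\cL(H)}\leq\lambda_1^\theta$ together with the bare decay $\tau^{\alpha+\gamma-1}$. These combine into the unified bound $\|A^{\rho/2}\cS_\gamma(\tau)Q^{1/2}\|_{\cL_2(H)}\leq C\tau^{\alpha+\gamma-1-\alpha(\rho-r+\kappa)^+/(2\beta)}$.

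For the first estimate, It\^o's isometry yields
\begin{align*}
\|\Lambda(t)\|_{L^p(\Omega,\dot H^\rho)}^2\leq C_p\int_0^t(t-s)^{2(\alpha+\gamma-1)-\alpha(\rho-r+\kappa)^+/\beta}\rd s,
\end{align*}
whose integrand exponent exceeds $-1$ because $\alpha+\gamma>\tfrac12$ and $(\rho-r+\kappa)^+\leq\kappa<2\beta(\alpha+\gamma-\tfrac12)/\alpha$ by the very definition of $\kappa$. Hence the integral is bounded by $CT^{2(\alpha+\gamma)-1-\alpha(\rho-r+\kappa)^+/\beta}$, uniformly in $t\in(0,T]$.

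For the H\"older estimate I would decompose $\Lambda(t)-\Lambda(s)=I_1+I_2$ with
\begin{align*}
I_1:=\int_s^t\cS_\gamma(t-u)\rd W(u),\qquad I_2:=\int_0^s\bigl[\cS_\gamma(t-u)-\cS_\gamma(s-u)\bigr]\rd W(u),
\end{align*}
and treat $I_1$ by repeating the argument of the first estimate on the interval $[s,t]$, which immediately produces the rate $(t-s)^{\alpha+\gamma-\tfrac12-\alpha(\rho-r+\kappa)^+/(2\beta)}$. For $I_2$, using $\cS_\gamma(t-u)-\cS_\gamma(s-u)=\int_{s-u}^{t-u}\cS'_\gamma(w)\rd w$ together with the derivative bound $\|A^\theta\cS'_\gamma(w)\|_{\cL(H)}\leq Cw^{\alpha+\gamma-2-\alpha\theta/\beta}$ (from termwise differentiation of the Mittag--Leffler series, part of Lemma \ref{lem.Opera}), an interpolation between this and the bare bound on $\|A^\theta\cS_\gamma\|_{\cL(H)}$ delivers
\begin{align*}
\|A^\theta[\cS_\gamma(t-u)-\cS_\gamma(s-u)]\|_{\cL(H)}\leq C(t-s)^\sigma(s-u)^{\alpha+\gamma-1-\alpha\theta/\beta-\sigma},\qquad \sigma\in[0,1).
\end{align*}
Applying It\^o's isometry and optimizing $\sigma$ subject to integrability of the resulting beta-type integral over $u\in[0,s]$ recovers the same rate as $I_1$, except at the borderline where $\sigma$ would saturate at $1$ and a small $\varepsilon>0$ of regularity is surrendered to replace a logarithmic divergence by a polynomial, yielding the $1-\varepsilon$ cap.

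The main obstacle is precisely the bookkeeping near this borderline exponent: the interpolation for $I_2$ becomes logarithmic exactly when the optimal $\sigma$ equals $1$, and the loss of $\varepsilon$ is the price paid to obtain a clean polynomial H\"older rate. A secondary subtlety is to verify that the $I_2$-rate never exceeds that of $I_1$, so that the slower of the two governs and matches the $\min$-form of the proposition; this follows by distributing powers of $t-s$ and $s-u$ optimally in the inner integral, in the same spirit as the analysis of $\Psi_3$ in the proof of Proposition \ref{prop.UpsilonG}.
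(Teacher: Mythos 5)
Your overall structure is the paper's: the same decomposition $\Lambda(t)-\Lambda(s)=I_1+I_2$, the same factorization through $A^{(\kappa-r+\rho)/2}$ using Assumption~\ref{ass.Noise}, and the same treatment of $I_1$. (The paper applies the Burkholder--Davis--Gundy inequality where you invoke Gaussianity plus It\^o's isometry; for a Wiener integral these are interchangeable.)

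The divergence, and the gap, is in $I_2$. Your interpolation
$\|A^\theta[\cS_\gamma(t-u)-\cS_\gamma(s-u)]\|_{\cL(H)}\leq C(t-s)^\sigma(s-u)^{\alpha+\gamma-1-\alpha\theta^+/\beta-\sigma}$,
once fed into It\^o's isometry, requires $\sigma<\vartheta:=\alpha+\gamma-\tfrac12-\frac{\alpha}{2\beta}(\kappa-r+\rho)^+$ (and $\sigma\le1$) for the $u$-integral to converge, so it can only produce the rate $(t-s)^{\min\{\vartheta,1\}-\varepsilon'}$ for arbitrarily small $\varepsilon'>0$. You assert that the $\varepsilon$-loss occurs only when $\sigma$ saturates at $1$; in fact it occurs for \emph{every} $\rho$, because the optimal $\sigma$ is always pinned against the integrability boundary. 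The proposition, however, claims the exponent $\min\{\vartheta,1-\varepsilon\}$ --- that is, exactly $\vartheta$ whenever $\vartheta<1$. The paper attains this through Lemma~\ref{le.DiffOpera}, which does \emph{not} interpolate: it integrates the derivative bound to obtain $|(t-u)^a-(s-u)^a|$ with $a=\alpha+\gamma-1-\frac{\alpha}{2\beta}(\kappa-r+\rho)^+$, and then invokes (via \cite{DaiXiaoBu2021}) the sharp scaling identity $\int_0^s|(t-u)^a-(s-u)^a|^2\rd u\le C(t-s)^{2a+1}$ for $a\in(-\tfrac12,\tfrac12)$, which follows from the self-similar substitution $v=(s-u)/(t-s)$ and the finiteness of $\int_0^\infty\big((1+v)^a-v^a\big)^2\rd v$. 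This gives $\vartheta$ with no loss; the genuine $\varepsilon$-losses are confined to the borderlines $a=\tfrac12$ and $\alpha+\gamma=1$, which is why the cap is $1-\varepsilon$ rather than $1$. Your weaker exponent is harmless for the paper's downstream applications (the $\varepsilon_0>0$ built into $\kappa$ absorbs it), but to prove the proposition \emph{as stated} you should replace the interpolation by this scaling computation whenever $\vartheta<1$.
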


\begin{proof}
We only prove the second estimate since the first one is a byproduct. Note that 
\begin{align} \label{eq.LambdTwo}
\| \Lambda(t) - \Lambda(s) \|_{L^p(\Omega,\dot{H}^\rho)} 
\leq \Big\| \int_{s}^{t} \cS_{\gamma}(t-u) \rd W(u) \Big\|_{L^p(\Omega,\dot{H}^{\rho})} + \Big\| \int_{0}^{s} \cS_{\gamma}(t-u) - \cS_{\gamma}(s-u) \rd W(u) \Big\|_{L^p(\Omega,\dot{H}^{\rho})}.
\end{align}
Using Assumption \ref{ass.Noise} and Lemma \ref{lem.Opera}(1) indicates
\begin{align*}
\int_s^t \| A^{\frac{\rho}{2}} \cS_{\gamma}(t-u) Q^{\frac{1}{2}} \|_{\cL_2(H)}^2 \rd u 
&\leq \| A^{\frac{ r-\kappa}{2}} Q^{\frac{1}{2}} \|^2_{\cL_2(H)} \int_s^t \| A^{\frac{\kappa- r + \rho}{2}} \cS_{\gamma}(t-u) \|_{\cL(H)}^2 \rd u \notag\\
&\leq C (t-s)^{2( \alpha + \gamma - \frac{1}{2} - \frac{\alpha}{2\beta} (\kappa- r+\rho)^{+}) },
\end{align*}
which along with the Burkholder--Davis--Gundy inequality (see e.g., \cite{DaPrato2014Book}) yields
\begin{align} \label{eq.Lambda1sub}
\Big\| \int_{s}^{t} \cS_{\gamma}(t-u) \rd W(u) \Big\|_{L^p(\Omega,\dot{H}^{\rho})}
\leq C (t-s)^{\alpha + \gamma - \frac{1}{2} - \frac{\alpha}{2\beta} (\kappa- r+\rho)^{+}}.
\end{align}
Besides, Assumption \ref{ass.Noise} and Lemma \ref{le.DiffOpera} reveal
\begin{align*}
&\quad\ \Big( \int_0^s \| A^{\frac{\rho}{2}} ( \cS_{\gamma}(t-u) - \cS_{\gamma}(s-u) ) Q^{\frac{1}{2}} \|_{\cL_2(H)}^2 \rd u \Big)^{\frac{1}{2}} \nonumber\\
&\leq \| A^{\frac{ r-\kappa}{2}} Q^{\frac{1}{2}} \|_{\cL_2(H)} \Big( \int_0^s \| A^{\frac{\kappa- r + \rho}{2}} ( \cS_{\gamma}(t-u) - \cS_{\gamma}(s-u) ) \|_{\cL(H)}^2 \rd u \Big)^{\frac{1}{2}} \nonumber\\
&\leq
\begin{cases}
C (t-s)^{\frac{1}{2} - \frac{\alpha}{2\beta}(\kappa- r + \rho) }, & \mbox{if } \alpha + \gamma = 1, \\
C (t-s)^{\min\{\alpha+\gamma-\frac{1}{2}-\frac{\alpha}{2\beta} (\kappa- r+\rho)^{+}, 1-\varepsilon\}}, & \mbox{if } \alpha + \gamma \neq 1
\end{cases}\\
&\leq C (t-s)^{\min\{\alpha+\gamma-\frac{1}{2}-\frac{\alpha}{2\beta} (\kappa- r+\rho)^{+}, 1-\varepsilon\}}
\end{align*}
which in combination with the Burkholder--Davis--Gundy inequality gives
\begin{align} \label{eq.Lambda2sub}
 \Big\| \int_{0}^{s} \cS_{\gamma}(t-u) - \cS_{\gamma}(s-u) \rd W(u) \Big\|_{L^p(\Omega,\dot{H}^{\rho})} 
 \leq C (t-s)^{\min\{\alpha+\gamma-\frac{1}{2}-\frac{\alpha}{2\beta} (\kappa- r+\rho)^{+}, 1-\varepsilon\}}
\end{align}
Finally, combining the estimates \eqref{eq.LambdTwo}--\eqref{eq.Lambda2sub} completes the proof. \qed 
\end{proof}

\section{Stochastic space-time fractional diffusion equation}
\label{sec.wellpos}

This section gives the well-posedness, regularity and small noise asymptotic behavior of the stochastic space-time fractional diffusion equation \eqref{eq.model}. In particular, we present a singular-type temporal regularity estimate for the mild solution \eqref{eq.mildSol} by making full use of Proposition \ref{prop.UpsilonG}. In the strong convergence analysis of the MLE integrator \eqref{eq.NumSol}, the singular-type temporal regularity estimate is more instrumental than H\"older's regularity estimate, since the former has a higher exponent than the latter. 

\subsection{Well-posedness and regularity}

We begin with stating the existence and uniqueness of the mild solution \eqref{eq.mildSol}, whose proof can be found in Appendix \ref{sec.pfofwellpose}; see also \cite{Kang2021IMA} for the case of $\alpha \in (\frac{1}{2},1)$ and $\beta \in (\frac{1}{2},1]$.

\begin{Theo} [existence and uniqueness] \label{thm.ExisUniq}
Let $X_0 \in L^p(\Omega,H)$ with some $p \geq 2$ and Assumptions \ref{ass.Noise} and \ref{ass.Lip1} hold. Then the model \eqref{eq.model} admits the unique mild solution $X \in \cC([0,T], L^p(\Omega,H))$ given by \eqref{eq.mildSol}.
\end{Theo}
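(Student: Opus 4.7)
The plan is to realize the mild solution as the unique fixed point of the Picard operator
\[
(\mathcal{T}Y)(t) := \cS_{1-\alpha}(t) X_0 + \int_0^t \cS_0(t-s) F(Y(s)) \rd s + \Lambda(t), \qquad t \in [0,T],
\]
acting on the Banach space $E := \cC([0,T], L^p(\Omega,H))$. First I would verify that $\mathcal{T}$ maps $E$ into itself. The first summand $t\mapsto \cS_{1-\alpha}(t)X_0$ is continuous in $L^p(\Omega,H)$ because $\cS_{1-\alpha}(t) = \sum_k E_{\alpha,1}(-\lambda_k^\beta t^\alpha) \<\cdot,\phi_k\>\phi_k$ is uniformly bounded in $\cL(H)$ and continuous in $t$ (with $\cS_{1-\alpha}(0)=I$). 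Continuity of $t\mapsto \Lambda(t)$ in $L^p(\Omega,H)$ follows directly from Proposition \ref{prop.Lambda} applied with $\rho=0$ (permissible since $0\in[r-2\kappa,r]$). Continuity and $L^p$-boundedness of $t\mapsto\int_0^t \cS_0(t-s)F(Y(s))\rd s$ come from Assumption \ref{ass.Lip1} together with the integrability of the singular kernel: using the bound $\|\cS_0(t-s)\|_{\cL(H)}\leq C(t-s)^{\alpha-1}$ from Lemma \ref{lem.Opera}, the linear growth of $F$ yields a uniform $L^p$-bound on $[0,T]$, and time continuity follows from the dominated convergence theorem since $\alpha>0$ makes $(t-s)^{\alpha-1}$ integrable.

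Next I would establish the contraction property of $\mathcal{T}$. By the Lipschitz estimate in Assumption \ref{ass.Lip1} and the operator bound above,
\[
\|(\mathcal{T}Y_1)(t) - (\mathcal{T}Y_2)(t)\|_{L^p(\Omega,H)} \leq CL \int_0^t (t-s)^{\alpha-1} \|Y_1(s)-Y_2(s)\|_{L^p(\Omega,H)} \rd s.
\]
Iterating this inequality $n$ times, a standard computation exploiting the Beta function yields
\[
\|(\mathcal{T}^n Y_1)(t) - (\mathcal{T}^n Y_2)(t)\|_{L^p(\Omega,H)} \leq \frac{(CL\Gamma(\alpha))^n \, t^{n\alpha}}{\Gamma(n\alpha+1)} \sup_{s\in[0,T]}\|Y_1(s)-Y_2(s)\|_{L^p(\Omega,H)}.
\]
Since $\Gamma(n\alpha+1)$ grows super-exponentially in $n$, one can choose $n$ large enough that the prefactor is strictly less than one, so $\mathcal{T}^n$ is a strict contraction on $E$. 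The generalized Banach fixed-point theorem then furnishes a unique fixed point $X\in E$ of $\mathcal{T}$, which is by construction the mild solution \eqref{eq.mildSol}.

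The main technical obstacle is the singularity of $\cS_0(t-s)$ at $s=t$: a naive sup-norm contraction is unavailable because the map $Y\mapsto \int_0^t\cS_0(t-s)F(Y(s))\rd s$ has Lipschitz constant $CL\,t^\alpha/\alpha$ that need not shrink on a prescribed interval. The iteration scheme above (or, equivalently, a weighted norm $\|Y\|_\lambda := \sup_t e^{-\lambda t}\|Y(t)\|_{L^p(\Omega,H)}$ with $\lambda$ large) is precisely what bypasses this by turning a single Lipschitz bound into a summable sequence via the integrability of the $\alpha$-kernel. Uniqueness in $\cC([0,T],L^p(\Omega,H))$ is automatic from the fixed-point formulation; alternatively, if $X,\tilde X$ are two mild solutions, the same Volterra inequality and a generalized Gronwall lemma applied to $t\mapsto\|X(t)-\tilde X(t)\|_{L^p(\Omega,H)}$ force $X=\tilde X$.
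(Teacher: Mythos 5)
Your proposal is correct and follows the same Banach fixed-point strategy as the paper, but with a few differences in the implementation worth noting. The paper actually proves Theorem~\ref{thm.ExisUniq} as a special case of Theorem~\ref{thm.ExisUniq-Multi} in Appendix~\ref{sec.pfofwellpose}, which handles multiplicative noise; there, because the diffusion coefficient $\sigma(U(\cdot))$ also depends on the unknown, the contraction estimate must absorb a second singular kernel $(t-s)^{2(\alpha+\gamma-1)}$ coming from the Burkholder--Davis--Gundy inequality, so they set $\upsilon := \min\{\alpha, 2(\alpha+\gamma-\tfrac12)\}$ and work in a space weighted by the Mittag--Leffler function $E_{\upsilon,1}(\varpi t^{\upsilon})$. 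The key point is that this weight satisfies the exact Volterra inequality $\frac{\varpi}{\Gamma(\upsilon)}\int_0^t (t-s)^{\upsilon-1} E_{\upsilon,1}(\varpi s^{\upsilon})\,\rd s \le E_{\upsilon,1}(\varpi t^{\upsilon})$, so $\cT$ is a one-step contraction of modulus $C_0\Gamma(\upsilon)/\varpi$, which can be made $<1$ by taking $\varpi$ large. Your approach of either iterating to get $\cT^n$ with a $t^{n\alpha}/\Gamma(n\alpha+1)$ prefactor, or using an exponential weight $e^{-\lambda t}$, accomplishes the same thing and is entirely legitimate for the additive-noise case, where $\Lambda(t)$ sits outside the fixed-point iteration and only the drift kernel $(t-s)^{\alpha-1}$ enters the contraction. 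The Mittag--Leffler weight is the ``natural'' choice here because it equalizes the two exponents $n\alpha$ and $\Gamma(n\alpha+1)$ into a single closed-form majorant, but neither route is materially simpler or stronger in this additive setting; the paper's choice mainly pays off because it scales uniformly to the multiplicative case. One minor imprecision in your verification of invariance: to conclude $\cS_{1-\alpha}(\cdot)X_0$ is continuous in $L^p(\Omega,H)$ at $t=0$ you should invoke dominated convergence on the spectral representation (since $E_{\alpha,1}(-\lambda_k^\beta t^\alpha)\to 1$ pointwise as $t\to 0^+$ and the coefficients are uniformly bounded by \eqref{upper.ML1}), rather than asserting operator-norm continuity of $\cS_{1-\alpha}(t)$, which fails at $t=0$ in the infinite-dimensional setting.
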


Next, we give the spatio-temporal regularity of the mild solution \eqref{eq.mildSol}, whose proof is mainly based on the estimates of solution operators (see Appendix \ref{sec.SolOpera}) and Proposition \ref{prop.Lambda}.

\begin{Prop} [regularity] \label{prop.Regu}
Let $p \geq 2$ and Assumptions \ref{ass.Noise}--\ref{ass.Lip1} hold.
\begin{enumerate}
\item[(1)] If $X_0 \in L^p(\Omega,\dot{H}^{r})$, then $X \in \cC([0,T], L^p(\Omega,\dot{H}^{r}))$.

\item[(2)] If $X_0 \in L^p(\Omega,\dot{H}^{2\beta})$, then there exists some positive constant $C$ such that
\begin{align} \label{Holder-1}
\| X(t) - X(s) \|_{L^p(\Omega,H)}
\leq C (t-s)^{ \min\{ \alpha, \frac{\alpha r}{2\beta} + (\gamma-\frac{1}{2})^{+} \} }, \qquad \forall \, 0\leq s < t \leq T.
\end{align}
\end{enumerate}
\end{Prop}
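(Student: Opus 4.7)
The plan is to decompose via the mild formulation
$X(t)=\cS_{1-\alpha}(t)X_0+\Upsilon_{F\circ X}(t)+\Lambda(t)$
and estimate each summand using the smoothing properties of the solution operators (Lemma \ref{lem.Opera} in the appendix) together with the convolution bounds already proved in Propositions \ref{prop.UpsilonG} and \ref{prop.Lambda}. A key structural observation is that only the $L^p(\Omega,H)$-boundedness of $X$ (from Theorem \ref{thm.ExisUniq}) is needed for the nonlinear term, so no circularity arises between parts (1) and (2).

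For part (1), I would first bound $\|\cS_{1-\alpha}(t)X_0\|_{L^p(\Omega,\dot H^r)}$ via the uniform boundedness of $E_{\alpha,1}(-\lambda_k^\beta t^\alpha)$ on $[0,T]$. Next, the nonlinear convolution is controlled by Minkowski's inequality and the smoothing bound $\|A^{r/2}\cS_0(t-u)\|_{\cL(H)}\le C(t-u)^{\alpha-1-\alpha r/(2\beta)}$; this exponent is strictly greater than $-1$ since $r\le\kappa<2\beta$, so combined with the linear growth of $F$ and Theorem \ref{thm.ExisUniq} it yields $\sup_t\|\Upsilon_{F\circ X}(t)\|_{L^p(\Omega,\dot H^r)}<\infty$. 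The stochastic convolution is estimated directly by Proposition \ref{prop.Lambda} with $\rho=r$. Continuity in $t$ then follows from strong continuity of $\cS_{1-\alpha}$, dominated convergence for $\Upsilon_{F\circ X}$, and the Hölder bound on $\Lambda$.

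For part (2), the claimed Hölder exponent is the minimum of three separate contributions. Using the Mittag--Leffler identity $\tfrac{d}{du}E_{\alpha,1}(-\lambda u^\alpha)=-\lambda u^{\alpha-1}E_{\alpha,\alpha}(-\lambda u^\alpha)$ one obtains $\partial_u\cS_{1-\alpha}(u)=-A^\beta\cS_0(u)$, so that
\begin{equation*}
\cS_{1-\alpha}(t)X_0-\cS_{1-\alpha}(s)X_0=-\int_s^t\cS_0(u)A^\beta X_0\,\rd u,
\end{equation*}
whose $L^p(\Omega,H)$-norm is at most $C\int_s^t u^{\alpha-1}\,\rd u\,\|X_0\|_{L^p(\Omega,\dot H^{2\beta})}\le C(t-s)^\alpha$ by the subadditivity $t^\alpha-s^\alpha\le(t-s)^\alpha$; this step consumes the hypothesis $X_0\in L^p(\Omega,\dot H^{2\beta})$ and produces the first term $\alpha$ of the minimum. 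For $\Upsilon_{F\circ X}$, the crude estimate \eqref{eq.Way1Holder}, which requires only the $L^p(\Omega,H)$-boundedness of $F\circ X$, again yields the exponent $\alpha$. For $\Lambda$, Proposition \ref{prop.Lambda} with $\rho=0$ (admissible since $r\le 2\kappa$) gives the exponent $\alpha+\gamma-\tfrac12-\tfrac{\alpha(\kappa-r)}{2\beta}$; substituting $\kappa=\min\{(\alpha+\gamma-\tfrac12)\tfrac{2\beta}{\alpha},\,2\beta\}-\varepsilon_0$ and splitting into the cases $\gamma\le\tfrac12$ and $\gamma>\tfrac12$ collapses this to $\tfrac{\alpha r}{2\beta}+(\gamma-\tfrac12)^+$, the second term of the minimum.

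The main obstacle is bookkeeping: the reduction of the $\kappa$-dependent $\Lambda$-exponent to the compact form $\tfrac{\alpha r}{2\beta}+(\gamma-\tfrac12)^+$ needs a case distinction, and the arbitrarily small $\varepsilon_0$ in Assumption \ref{ass.Noise} must be absorbed into the generic constant $C$. The identity $\partial_u\cS_{1-\alpha}=-A^\beta\cS_0$ is the essential deterministic ingredient that allows the $\dot H^{2\beta}$-regularity of the initial datum to be traded for an $\alpha$-rate in time.
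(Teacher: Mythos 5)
Your proposal is correct and follows essentially the same route as the paper: decompose $X$ via the mild formulation and treat the three pieces using Lemma \ref{lem.Opera}, \eqref{eq.Way1Holder}, and Proposition \ref{prop.Lambda} with $\rho=0$, reducing the $\kappa$-dependent exponent of $\Lambda$ to $\tfrac{\alpha r}{2\beta}+(\gamma-\tfrac12)^+$ by the same two-case analysis. The only cosmetic differences are that you invoke the a~priori $L^p(\Omega,H)$-bound from Theorem \ref{thm.ExisUniq} directly instead of closing part (1) with a singular Gr\"onwall step, and in part (2) you unfold the Mittag--Leffler derivative identity to prove the special case $\rho=-2\beta$ of Lemma \ref{lem.Opera}(2) inline rather than citing it — both are faithful variants of what the paper does.
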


\begin{proof}
(1) Using \eqref{eq.mildSol}, Proposition \ref{prop.Lambda} with $\rho=r$, Lemma \ref{lem.Opera}(1) and Assumption \ref{ass.Lip1}, we obtain
\begin{align*}
\| X(t) \|_{L^p(\Omega,\dot{H}^{r})}
&\leq \| \cS_{1-\alpha}(t) \|_{\cL(H)} \| X_0 \|_{L^p(\Omega,\dot{H}^{r})} + \int_0^t \| A^{\frac{r}{2}} \cS_{0}(t-s) \|_{\cL(H)} \| F(X(s)) \|_{L^p(\Omega,H)} \rd s + C \\
&\leq C + C \int_0^t (t-s)^{\alpha-1-\frac{\alpha r}{2\beta}} (1 + \|X(s)\|_{L^p(\Omega,H)}) \rd s,\qquad\forall\,t\in[0,T],
\end{align*}
in which the norm $\| \cdot \|_{L^p(\Omega,H)} $ can be further bounded by $C \| \cdot \|_{L^p(\Omega,\dot{H}^{r})}$. Thus, applying the relation $r < 2\beta$ and the singular-type Gr\"onwall's inequality %(see e.g., \cite[Lemma A.2]{Kruse2014Book})
 yields the spatial regularity result. 

(2) By Lemma \ref{lem.Opera}(2) and $\| X_0 \|_{L^p(\Omega,\dot{H}^{2\beta})} < \infty$,
\begin{align*}
\| ( \cS_{1-\alpha}(t) - \cS_{1-\alpha}(s) ) X_0 \|_{L^p(\Omega,H)}
&\leq \| A^{-\beta} ( \cS_{1-\alpha}(t) - \cS_{1-\alpha}(s) ) \|_{\cL(H)} \| X_0 \|_{L^p(\Omega,\dot{H}^{2\beta})} \\ 
&\leq C \big( t^{\alpha} - s^{\alpha} \big) \leq C (t-s)^{\alpha}.
\end{align*}
It follows from Theorem \ref{thm.ExisUniq} and Assumption \ref{ass.Lip1} that $\sup_{t \in [0,T]} \| F(X(t)) \|_{L^p(\Omega,H)} < \infty$, which along with \eqref{eq.Way1Holder} implies 
\begin{align*}
\| \Upsilon_{F\circ X}(t) - \Upsilon_{F\circ X}(s) \|_{L^p(\Omega,H)} \leq C (t-s)^{\alpha}. 
\end{align*}
By Proposition \ref{prop.Lambda} with $\rho=0$ and $\kappa<\min\{ (\alpha+\gamma-\frac{1}{2})\frac{2\beta}{\alpha}, 2\beta \}$, 
\begin{align}\label{eq:Lamb}
\| \Lambda(t) - \Lambda(s) \|_{L^p(\Omega,H)} &\leq C (t-s)^{ \min\{ 1-\varepsilon, \frac{\alpha r}{2\beta} + (\gamma-\frac{1}{2})^{+} \} }.
\end{align}
Finally, collecting these estimates yields
\begin{align*}
\| X(t) - X(s) \|_{L^p(\Omega,H)}
&\leq \| ( \cS_{1-\alpha}(t) - \cS_{1-\alpha}(s) ) X_0 \|_{L^p(\Omega,H)} 
+ \| \Upsilon_{F\circ X}(t) - \Upsilon_{F\circ X}(s) \|_{L^p(\Omega,H)} + \| \Lambda(t) - \Lambda(s) \|_{L^p(\Omega,H)} \\
&\leq C (t-s)^{\alpha} + C (t-s)^{ \min\{ 1-\varepsilon, \frac{\alpha r}{2\beta} + (\gamma-\frac{1}{2})^{+} \} } \\
&\leq C (t-s)^{ \min\{ \alpha, \frac{\alpha r}{2\beta} + (\gamma-\frac{1}{2})^{+} \} }.
\end{align*}
The proof is completed. \qed
\end{proof}

\begin{remark}
%When the noise $\dot{W}$ in \eqref{eq.model} is a space time white noise (i.e., $Q=I$), then
%the temporal H\"older continuity exponent  $\min\{ \alpha, \frac{\alpha r}{2\beta} + (\gamma-\frac{1}{2})^{+} \}$ in \eqref{Holder-1} is close to 
%$\min\{\alpha,- \frac{\alpha d}{4\beta}+\alpha+\gamma-\frac{1}{2}\}.$
%
%To see this, we first use

In the following, we intent to discuss the regularity result in Proposition \ref{prop.Regu} under the condition
\begin{itemize}
\item[]
\begin{center}
(A1)
\textit{$X_0 \in L^p(\Omega,\dot{H}^{2\beta})$ and the noise $\dot{W}$ in \eqref{eq.model} is a space-time white noise (i.e., $Q=I$).}
\end{center}
\end{itemize}
When $Q=I$, Assumption \ref{ass.Noise} is equivalent to $\|A^{\frac{ r-\kappa}{2}}  \|^2_{\cL_2(H)}=\sum_{k=1}^\infty  \lambda_k^{r-\kappa}\approx \sum_{k=1}^\infty  k^{\frac{2}{d}(r-\kappa)} < \infty$ since $\lambda_k\approx k^{\frac{2}{d}}$ (see e.g., \cite[formula (5.2)]{CS05}). Here the series converges if and only if $r<-\frac{d}{2}+\kappa$. Then one can choose $r$ sufficiently near $-\frac{d}{2}+\kappa$ with $\kappa=\min\{ (\alpha+\gamma-\frac{1}{2})\frac{2\beta}{\alpha}, 2\beta \} - \varepsilon_0$ to guarantee that Assumption \ref{ass.Noise} holds.  Hence under (A1), the exponent $ \frac{\alpha r}{2\beta} + (\gamma-\frac{1}{2})^{+}$ in \eqref{Holder-1} is close to 
$ \frac{\alpha}{2\beta}(-\frac{d}{2}+\min\{ (\alpha+\gamma-\frac{1}{2})\frac{2\beta}{\alpha}, 2\beta \}) + (\gamma-\frac{1}{2})^{+}=- \frac{\alpha d}{4\beta}+ \alpha+\gamma-\frac{1}{2}$, which along with the Kolmogorov continuity theorem implies that the trajectories of $X$ are nearly 
$\min\{\alpha, - \frac{\alpha d}{4\beta}+ \alpha+\gamma-\frac{1}{2}\}$-H\"older continuous in time almost surely. If further $F\equiv0$, then by \eqref{eq:Lamb}, the trajectories of $X$ are nearly 
$\min\{1, - \frac{\alpha d}{4\beta}+ \alpha+\gamma-\frac{1}{2}\}$-H\"older continuous in time away from zero almost surely (see also e.g.,
\cite[Theorem 1]{ChenHu2022}). 
In particular, when $\gamma=1-\alpha$, the trajectories of $X$ are nearly $\min\{1, - \frac{\alpha d}{4\beta}+\frac{1}{2}\}$-H\"older continuous in time away from zero almost surely (see also e.g., \cite[formula (1.8)]{AlloubaXiao2017}). 
\end{remark}

With the temporal H\"older regularity estimate \eqref{Holder-1} at hand, we can prove the following singular-type temporal regularity estimate by repeatedly using Proposition \ref{prop.UpsilonG}.

\begin{Prop} [singular-type regularity] 
\label{th.sinRegu}
Let $X_0 \in L^p(\Omega,\dot{H}^{2\beta})$ for some $p \geq 2$ and Assumptions \ref{ass.Noise}--\ref{ass.Lip1} hold. Then
\begin{align*}
\| X(t) - X(s) \|_{L^p(\Omega,H)}
\leq C_{\varepsilon} s^{\varepsilon-\frac{1}{2}} (t-s)^{\min\{ \frac{\alpha r}{2\beta} + (\gamma-\frac{1}{2})^{+}, 1-\varepsilon \}}, \quad\ \ \forall\, 0 < s < t \leq T.
\end{align*}
\end{Prop}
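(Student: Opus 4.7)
My plan is to decompose $X(t) - X(s)$ via the mild representation \eqref{eq.eqivaMild}, estimate the three resulting pieces separately, and drive the $\Upsilon_{F\circ X}$ part up to the target rate by iterating Proposition \ref{prop.UpsilonG}(2). Write $\theta := \min\{\frac{\alpha r}{2\beta}+(\gamma-\frac{1}{2})^{+},\,1-\varepsilon\}$ for the target exponent. The stochastic convolution piece is already tame: Proposition \ref{prop.Lambda} with $\rho=0$ yields $\|\Lambda(t)-\Lambda(s)\|_{L^p(\Omega,H)} \leq C(t-s)^{\theta}$ once one checks that substituting the value of $\kappa$ into $\alpha+\gamma-\frac{1}{2}-\frac{\alpha}{2\beta}(\kappa-r)$ reduces, in both cases $\gamma \leq \frac{1}{2}$ and $\gamma > \frac{1}{2}$, to $\frac{\alpha r}{2\beta}+(\gamma-\frac{1}{2})^{+}$ up to an $O(\varepsilon_0)$ correction absorbable into $\varepsilon$.

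For the linear piece, Lemma \ref{lem.Opera}(2) delivers $\|(\cS_{1-\alpha}(t)-\cS_{1-\alpha}(s))X_0\|_{L^p(\Omega,H)} \leq C(t^\alpha - s^\alpha) \|X_0\|_{L^p(\Omega,\dot{H}^{2\beta})}$. To trade a bit of the H\"older exponent for a singular factor in $s$, I will use the elementary inequality
\[
t^\alpha - s^\alpha \;=\; \alpha\int_s^t u^{\alpha-\nu}\,u^{\nu-1}\,\rd u \;\leq\; \alpha\, s^{\alpha-\nu}\!\int_s^t u^{\nu-1}\,\rd u \;\leq\; s^{\alpha-\nu}(t-s)^{\nu},
\]
valid for any $\nu \in (\alpha, 1]$ by monotonicity of $u\mapsto u^{\alpha-\nu}$ on $[s,t]$ together with $t^\nu-s^\nu\leq(t-s)^\nu$. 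Choosing $\nu = \theta$ in the nontrivial regime $\alpha < \theta$ yields the bound $C s^{\alpha-\theta}(t-s)^{\theta}$ on the linear part.

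The heart of the proof is a bootstrap on the regularity of $X$ itself. I will establish, by induction on $n = 0,1,2,\ldots$, the existence of $\mu_n \geq 0$ and $\nu_n \in [\alpha\wedge\theta,\,\theta]$ such that
\[
\|X(t)-X(s)\|_{L^p(\Omega,H)} \;\leq\; C\, s^{-\mu_n}(t-s)^{\nu_n}, \qquad 0 < s < t \leq T.
\]
By Assumption \ref{ass.Lip1}, the composition $G := F\circ X$ inherits this bound with the same $(\mu_n,\nu_n)$, so Proposition \ref{prop.UpsilonG}(2) upgrades $\Upsilon_{F\circ X}$ to
\[
\|\Upsilon_{F\circ X}(t)-\Upsilon_{F\circ X}(s)\|_{L^p(\Omega,H)} \;\leq\; C(t^\alpha-s^\alpha) + C\, s^{-\mu_n}(t-s)^{\min\{\alpha+\nu_n-\varepsilon,\,1\}}.
\]
Combining this with the linear-part and stochastic-convolution estimates, and setting $\nu_{n+1} := \min\{\alpha+\nu_n-\varepsilon,\,\theta\}$ and $\mu_{n+1} := (\nu_{n+1}-\alpha)^+$, one gets the analogous bound with $(\mu_n,\nu_n)$ replaced by $(\mu_{n+1},\nu_{n+1})$. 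The base case $(\mu_0,\nu_0) = (0,\min\{\alpha,\theta\})$ is furnished by Proposition \ref{prop.Regu}(2). Since $\nu_n$ grows by at least $\alpha-\varepsilon$ per step, the iteration saturates at $\nu_\infty = \theta$ after finitely many rounds, leaving $\mu_\infty = (\theta-\alpha)^+$.

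What remains is to verify that $(\theta-\alpha)^{+} \leq \frac{1}{2} - \varepsilon$. Using $r \leq \kappa \leq 2\beta - \varepsilon_0$ one has $\frac{\alpha r}{2\beta} \leq \alpha - \frac{\alpha\varepsilon_0}{2\beta}$, so $\frac{\alpha r}{2\beta}+(\gamma-\frac{1}{2})^{+} - \alpha \leq \frac{1}{2} - \frac{\alpha\varepsilon_0}{2\beta}$; the other candidate $1-\varepsilon-\alpha$ is bounded by $\frac{1}{2}-\varepsilon$ whenever $\alpha \geq \frac{1}{2}$. A consistent choice of $\varepsilon$ of order at least $\frac{\alpha\varepsilon_0}{2\beta}$ then secures the desired $s^{\varepsilon-\frac{1}{2}}$ factor. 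The main obstacle I anticipate is precisely this bookkeeping between the arbitrary small $\varepsilon$ appearing in the statement and the $\varepsilon_0$ baked into $\kappa$, together with the careful monotone tracking of $(\mu_n,\nu_n)$ through the bootstrap; the individual estimates themselves are routine consequences of Propositions \ref{prop.UpsilonG} and \ref{prop.Lambda} and the basic operator bounds in Lemma \ref{lem.Opera}.
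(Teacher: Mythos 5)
Your proof is correct and takes essentially the same route as the paper: decompose $X(t)-X(s)$ via the mild formula into linear, drift-convolution, and stochastic-convolution pieces, use the elementary trade-off $t^\alpha-s^\alpha\leq s^{\alpha-\nu}(t-s)^\nu$ for the first, iterate Proposition \ref{prop.UpsilonG}(2) for the second, and invoke Proposition \ref{prop.Lambda} for the third; the only cosmetic difference is that you track the singularity exponent $\mu_n=(\nu_n-\alpha)^+$ explicitly, whereas the paper simply fixes $\mu=\tfrac12-\varepsilon$ from the second iteration onward. (One small slip at the end: the constraint should be $\varepsilon\leq\tfrac{\alpha\varepsilon_0}{2\beta}$, i.e.\ $\varepsilon$ chosen of order \emph{at most} $\tfrac{\alpha\varepsilon_0}{2\beta}$, so that $(\theta-\alpha)^+\leq\tfrac12-\tfrac{\alpha\varepsilon_0}{2\beta}\leq\tfrac12-\varepsilon$.)
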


\begin{proof}
For any $\alpha \in (0,1)$, $\rho \in [\alpha,1]$ and $0 < s < t\leq T$, one has
\begin{align} \label{esti.alpha-1}
t^{\alpha} - s^{\alpha}
= t^{\alpha-\rho} t^{\rho} - s^{\alpha-\rho} s^{\rho}
\leq s^{\alpha-\rho} t^{\rho} - s^{\alpha-\rho} s^{\rho}
\leq s^{\alpha-\rho} (t-s)^{\rho},
\end{align}
which implies that there exists some sufficiently small constant $\varepsilon > 0$ satisfying
\begin{align} \label{esti.alpha-2}
t^{\alpha} - s^{\alpha} \leq C s^{\varepsilon-\frac{1}{2}} (t-s)^{\min\{ \alpha+\frac{1}{2}-\varepsilon, 1 \}}.
\end{align}
From the proof of \eqref{Holder-1}, one can read
\begin{align} \label{Holder.conclu}
\| X(t) - X(s) \|_{L^p(\Omega,H)} \leq C(t^{\alpha} - s^{\alpha}) + \| \Upsilon_{F\circ X}(t) - \Upsilon_{F\circ X}(s) \|_{L^p(\Omega,H)} + C (t-s)^{ \min\{ 1-\varepsilon, \frac{\alpha r}{2\beta} + (\gamma-\frac{1}{2})^{+} \} }.
\end{align}
Based on \eqref{Holder-1} and Assumption \ref{ass.Lip1}, applying Proposition \ref{prop.UpsilonG} with $\mu = 0$, $\nu = \min\{ \alpha, \frac{\alpha r}{2\beta} + (\gamma-\frac{1}{2})^{+} \}$ and $G = F\circ X$ yields
\begin{align} \label{Holder.I2-2}
\| \Upsilon_{F\circ X}(t) - \Upsilon_{F\circ X}(s) \|_{L^p(\Omega,H)} \leq C ( t^{\alpha} - s^{\alpha} ) + C (t-s)^{\min\{2\alpha-\varepsilon, \alpha+\frac{\alpha r}{2\beta} + (\gamma-\frac{1}{2})^{+}-\varepsilon, 1 \}}.
\end{align}
Then, it follows from \eqref{Holder.conclu}, \eqref{Holder.I2-2} and \eqref{esti.alpha-2} that
\begin{align} \label{esti.Holder2}
\| X(t) - X(s) \|_{L^p(\Omega,H)}
&\leq C ( t^{\alpha} - s^{\alpha} ) + C (t-s)^{\min\{2\alpha-\varepsilon, \frac{\alpha r}{2\beta} + (\gamma-\frac{1}{2})^{+}, 1-\varepsilon \}} \nonumber\\
&\leq C s^{\varepsilon-\frac{1}{2}} (t-s)^{\min\{ \alpha+\frac{1}{2}-\varepsilon, 2\alpha-\varepsilon, \frac{\alpha r}{2\beta} + (\gamma-\frac{1}{2})^{+}, 1-\varepsilon \}} \nonumber\\
&\leq C s^{\varepsilon-\frac{1}{2}} (t-s)^{\min\{ 2\alpha-\varepsilon, \frac{\alpha r}{2\beta} + (\gamma-\frac{1}{2})^{+}, 1-\varepsilon \}},
\end{align}
where the last line is due to $ \alpha+\frac{1}{2}> \frac{\alpha r}{2\beta} + (\gamma-\frac{1}{2})^{+}$.
Based on \eqref{esti.Holder2} and Assumption \ref{ass.Lip1}, applying Proposition \ref{prop.UpsilonG} with $\mu = \frac{1}{2}-\varepsilon$, $\nu = \min\{ 2\alpha -\varepsilon, \frac{\alpha r}{2\beta} + (\gamma-\frac{1}{2})^{+}, 1-\varepsilon \}$ and $G = F\circ X$ yields
\begin{align} \label{Holder.I2-3}
\| \Upsilon_{F\circ X}(t) - \Upsilon_{F\circ X}(s) \|_{L^p(\Omega,H)}
\leq C ( t^{\alpha} - s^{\alpha} ) + C s^{\varepsilon-\frac{1}{2}} (t-s)^{\min\{ 3\alpha -2\varepsilon, \alpha + \frac{\alpha r}{2\beta} + (\gamma-\frac{1}{2})^{+} -\varepsilon, 1 \}}.
\end{align}
Then, it follows from \eqref{Holder.conclu}, \eqref{Holder.I2-3} and \eqref{esti.alpha-2} that
\begin{align} \label{esti.Holder3}
\| X(t) - X(s) \|_{L^p(\Omega,H)}
&\leq C ( t^{\alpha} - s^{\alpha} ) + C s^{\varepsilon-\frac{1}{2}} (t-s)^{\min\{ 3\alpha -2\varepsilon, \frac{\alpha r}{2\beta} + (\gamma-\frac{1}{2})^{+}, 1-\varepsilon \}} \nonumber\\
&\leq C s^{\varepsilon-\frac{1}{2}} (t-s)^{\min\{ \alpha+\frac{1}{2}-\varepsilon, 3\alpha-2\varepsilon, \frac{\alpha r}{2\beta} + (\gamma-\frac{1}{2})^{+}, 1-\varepsilon \}} \nonumber\\
&\leq C s^{\varepsilon-\frac{1}{2}} (t-s)^{\min\{ 3\alpha-2\varepsilon, \frac{\alpha r}{2\beta} + (\gamma-\frac{1}{2})^{+}, 1-\varepsilon \}}.
\end{align}
Repeating the derivation steps of \eqref{esti.Holder2} and \eqref{esti.Holder3} shows 
\begin{align*} 
\| X(t) - X(s) \|_{L^p(\Omega,H)}
\leq C s^{\varepsilon-\frac{1}{2}} (t-s)^{ \min\{ \frac{\alpha r}{2\beta} + (\gamma-\frac{1}{2})^{+}, 1-\varepsilon \} }.
\end{align*}
The proof is completed. \qed
\end{proof}

\subsection{Freidlin--Wentzell type LDP}
We begin with some preliminaries in the theory of large deviations. Let $\mathcal X$ be a Polish space. A real-valued function $I:\mathcal X\rightarrow[0,\infty]$ is called a rate function if it is lower semicontinuous, i.e., for each $a\in[0,\infty)$, the level set $I^{-1}([0,a]) := \{x \in \mathcal X \mid I(x) \in [0,a] \}$ is a closed subset of $\mathcal X$. Further, if for any $a\in[0,\infty)$, the level set $I^{-1}([0,a])$ is compact, then $I$ is called a good rate function. 
\begin{Def} \label{LDPdef}
A family of $\mathcal X$-valued random variables $\{\mathbf{X}^\epsilon\}_{\epsilon>0}$ defined on $(\Omega,\mathscr F,\mathbb P)$ is said to satisfy an LDP on $\mathcal X$ with the rate function $I$ if for every Borel set $U$ of $\mathcal X$,
\begin{align*}
-\inf_{x\in U^\circ} I(x)\le\liminf_{\epsilon\to 0}\epsilon\ln\mathbb P\{\mathbf{X}^\epsilon\in U\}\le\limsup_{\epsilon\to 0}\epsilon\ln\mathbb P\{\mathbf{X}^\epsilon\in U\}\leq-\inf_{x\in\bar U} I(U),
\end{align*}
where $U^\circ$ and $\bar U$ denote the interior and closure of $U$ in $\mathcal X$, respectively.
\end{Def} 

Set $H_0:=Q^{\frac12}(H)$ endowed with the inner product $( g,h)_0:=\langle Q^{-\frac12}g,Q^{-\frac12}h\rangle$ for $g,h\in H_0$ and the induced norm $|\cdot|_0:=\sqrt{(\cdot,\cdot)_0}$, where $Q^{-\frac12}$ is the pseudo inverse of $Q^{\frac12}$. Let $\{e_k\}_{k=1}^\infty$ form an orthonormal basis of $H_0$. Let $H_1:=H_0$ endowed with the norm $\|\cdot\|_{H_1} := \big(\sum_{k=1}^\infty\alpha_k^2(\cdot,e_k)_0^2\big)^{1/2}$, where $\{\alpha_k\}_{k=1}^\infty\subset(0,\infty)$ satisfies $\sum_{k=1}^\infty \alpha_k^2<\infty$.
Then the inclusion $\mathbb{J}:(H_0,|\cdot|_0)\to (H_1,\|\cdot\|_{H_1})$, $H_0\ni g\mapsto \mathbb J g=g\in H_1$ is a Hilbert--Schmidt embedding.
 It can be verified that $\mathbb J^* e_k=\alpha_k^2 e_k$ for $k\in\mathbb N_+$.
 In the following, we identify $g\in H_0$ with its image $\mathbb{J}(g)\in H_1$. Then the Wiener process $W$ can also be seen as an $H_1$ valued $Q_{\mathbb J}:={\mathbb J}{\mathbb J}^*$-Wiener process with the covariance operator $Q_{\mathbb J}\in\mathcal L(H_1)$ satisfying $\text{tr}(Q_{\mathbb J})<\infty$, and the paths of $W$ takes values in $\mathcal C([0,T],H_1)$ a.s. Notice that $\|Q_{\mathbb J}^{-\frac12}g\|_{H_1}=|g|_0$ for any $g\in H_0$.
 For the special case that $\text{tr}(Q)<\infty$ and $Q\phi_k=q_k \phi_k$ for $k\in\mathbb N_+$, we can take $e_k=\sqrt{q_k}\phi_k$, $\alpha_k=\sqrt{q_k}$ such that
 $(H_1,\|\cdot\|_{H_1})=(H,\|\cdot\|)$, $\mathbb J^*=Q$ and $Q_{\mathbb J}=Q$. 

In order to introduce the criterion for the LDP in \cite{BD00}, we define
\begin{align*}
&\mathcal{A}=\left\{\phi: \phi \text{ is an } H_0\text{-valued } \{\mathscr{F}_{t}\} \text{-predictable process such that } \int_{0}^{T}|\phi(s)|_{0}^{2} \mathrm{~d} s<\infty,~ \mathbb{P}\text{-a.s.} \right\},\\
&S_N=\left\{h \in L^{2}([0, T],H_0): \int_{0}^{T}|h(s)|_{0}^{2} \mathrm{~d} s \leq N\right\},\\
&\mathcal{A}_{N}=\left\{\phi \in \mathcal{A}: \phi(\omega) \in S_{N},~ \mathbb{P}\text{-a.s.} \right\},\quad N\in\mathbb N_+.
\end{align*}

\begin{Prop}[\cite{BD00}]\label{prop:criterion}
Let $\mathcal E$ be a Polish space. For each $\epsilon \ge 0$, suppose that $\mathcal{G}^{\epsilon}: \mathcal C([0, T], H_1) \rightarrow \mathcal{E}$ is a measurable map and the following conditions hold:\

\begin{enumerate}
\item[] \textbf{\textup{(C1)}} For every $N<\infty$, if $\{v^{\epsilon}\}_{\epsilon>0} \subset \mathcal{A}_{N}$ converges in distribution (as $S_{N}$-valued random elements) to $v$, then $$\mathcal{G}^{\epsilon}\left(W(\cdot)+\frac{1}{\sqrt{\epsilon}} \int_{0}^\cdot v^{\epsilon}(s) \mathrm{d} s\right)\rightarrow\mathcal{G}^{0}\left(\int_{0}^\cdot v(s) \mathrm{d} s\right)\quad \mbox{in distribution as } \epsilon\to 0;$$

\item[] \textbf{\textup{(C2)}} For every $N<\infty$, the set $\left\{\mathcal{G}^{0}\left(\int_{0}^\cdot v(s) \mathrm{d} s\right): v\in S_{N}\right\}$ is a compact subset of $\mathcal{E}$.
\end{enumerate}
Then $\{\mathcal{G}^{\epsilon}(W)\}_{\epsilon>0}$ satisfies an LDP on $\mathcal{E}$ as $\epsilon\to 0$ with a good rate function I given by
\begin{equation*}
I(x)=\inf_{\left\{v\in L^2([0,T],H_0),\,x=\mathcal{G}^0\left(\int_0^\cdot v(s)\rd s\right)\right\}}\frac12 \int_0^T|v(s)|_0^2\rd s,\quad x\in\mathcal E.
\end{equation*}
\end{Prop}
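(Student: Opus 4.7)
The plan is to first reduce the LDP to the equivalent Laplace principle (both are equivalent on the Polish space $\mathcal E$ once $I$ is shown to be a good rate function) and then apply the Bou\'e--Dupuis variational representation of exponential functionals of the cylindrical Wiener process. Since $W$ is realized as a $Q_{\mathbb J}$-Wiener process on $H_1$ with trace-class covariance, this representation applies: for any bounded Borel $F:\mathcal C([0,T],H_1)\to\mathbb R$ and any $\epsilon>0$,
\begin{equation*}
-\epsilon\log\mathbb E\bigl[\exp(-\epsilon^{-1}F(W))\bigr]=\inf_{u\in\mathcal A}\mathbb E\Bigl[F\Bigl(W+\tfrac{1}{\sqrt\epsilon}\int_0^\cdot u(s)\,\mathrm ds\Bigr)+\tfrac12\int_0^T|u(s)|_0^2\,\mathrm ds\Bigr].
\end{equation*}
Taking $F=h\circ\mathcal G^\epsilon$ for arbitrary $h\in C_b(\mathcal E)$, the Laplace principle (hence the LDP) follows once the right-hand side is shown to converge, as $\epsilon\to 0$, to $\inf_{x\in\mathcal E}\{h(x)+I(x)\}$.

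For the upper Laplace bound, fix $x\in\mathcal E$ with $I(x)<\infty$ and pick a near-minimizer $v\in L^2([0,T],H_0)$ with $\mathcal G^0(\int_0^\cdot v\,\mathrm ds)=x$ and $\tfrac12\int_0^T|v|_0^2\,\mathrm ds\le I(x)+\delta$; note that $v\in\mathcal A_N$ for some $N$. Condition \textbf{(C1)} applied to the constant sequence $v^\epsilon\equiv v$ yields $\mathcal G^\epsilon(W+\epsilon^{-1/2}\int_0^\cdot v\,\mathrm ds)\to x$ in distribution, and since $h\in C_b(\mathcal E)$, the variational infimum is bounded above by $\mathbb E[h(\mathcal G^\epsilon(\cdots))]+\tfrac12\int_0^T|v|_0^2\,\mathrm ds\to h(x)+\tfrac12\int_0^T|v|_0^2\,\mathrm ds\le h(x)+I(x)+\delta$. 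Taking the infimum over $x$ and sending $\delta\downarrow 0$ closes the upper bound.

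The lower Laplace bound is the heart of the argument. Pick $u^\epsilon\in\mathcal A$ that is $\epsilon$-optimal for the variational problem; boundedness of $h$ forces $\mathbb E\int_0^T|u^\epsilon|_0^2\,\mathrm ds\le C$, so after a standard stopping-time truncation (chosen to change the functional by $o(1)$) we may assume $u^\epsilon\in\mathcal A_N$ for some $N<\infty$. Equip $S_N$ with the weak $L^2([0,T],H_0)$-topology: on this bounded set it is compact, metrizable, hence Polish, and $u\mapsto\int_0^T|u|_0^2\,\mathrm ds$ is lower semicontinuous. Prokhorov's theorem then extracts a subsequence along which $u^\epsilon\to v$ in distribution in $S_N$; condition \textbf{(C1)} identifies the in-distribution limit of the controlled outputs as $\mathcal G^0(\int_0^\cdot v\,\mathrm ds)$, and Fatou combined with the weak lower semicontinuity above gives
\begin{equation*}
\liminf_{\epsilon\to 0}\mathbb E\Bigl[h(\mathcal G^\epsilon(\cdots))+\tfrac12\int_0^T|u^\epsilon|_0^2\,\mathrm ds\Bigr]\ge\mathbb E\Bigl[h\bigl(\mathcal G^0({\textstyle\int_0^\cdot}v\,\mathrm ds)\bigr)+\tfrac12\int_0^T|v|_0^2\,\mathrm ds\Bigr]\ge\inf_{x\in\mathcal E}\{h(x)+I(x)\}.
\end{equation*}

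Finally, $I$ is a good rate function: the level set $\{I\le a\}$ is contained in $\mathcal G^0(\{\int_0^\cdot v\,\mathrm ds:v\in S_{2a}\})$, which is compact in $\mathcal E$ by \textbf{(C2)}; lower semicontinuity of $I$ follows from the weak-$L^2$ compactness of $S_{2a}$, the continuity of $v\mapsto\mathcal G^0(\int_0^\cdot v\,\mathrm ds)$ on $S_{2a}$ (a deterministic instance of \textbf{(C1)}), and the weak lower semicontinuity of the $L^2$-cost. The principal technical obstacle is the lower bound, where one must pass to the limit simultaneously in the random controls $u^\epsilon$ (convergent only in distribution), in the controlled outputs $\mathcal G^\epsilon(\cdots)$, and in the quadratic cost $\int_0^T|u^\epsilon|_0^2\,\mathrm ds$. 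This is precisely why condition \textbf{(C1)} is stated in terms of convergence in distribution on $S_N$ under its weak $L^2$-topology --- the minimal common framework in which Prokhorov's theorem, a Skorokhod representation, and weak lower semicontinuity of the cost functional can all operate at once.
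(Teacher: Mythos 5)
The paper itself offers no proof of Proposition \ref{prop:criterion}: it is imported verbatim from \cite{BD00}, so there is no in-paper argument to compare with, and what you have written is a reconstruction of the original Budhiraja--Dupuis proof. Your outline is essentially the correct one and matches the cited source: pass from the LDP to the equivalent Laplace principle, invoke the variational representation for bounded functionals of the $Q_{\mathbb J}$-Wiener process on $H_1$, obtain the upper Laplace bound by feeding a deterministic near-minimizing control into \textbf{(C1)}, and obtain the lower bound by truncating near-optimal random controls into $\mathcal A_N$, exploiting compactness of $S_N$ in the weak $L^2([0,T],H_0)$-topology, Prokhorov/Skorokhod, \textbf{(C1)} for the controlled outputs, and Fatou together with weak lower semicontinuity of the quadratic cost.

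Two steps should be tightened. First, the truncation of the $\epsilon$-optimal controls changes the variational value by an amount of order $\|h\|_\infty/N$ uniformly in $\epsilon$, not by $o(1)$ in $\epsilon$; the correct procedure is to fix $N$, let $\epsilon\to0$, and only then let $N\to\infty$. Second, your argument for goodness of $I$ appeals to ``continuity of $v\mapsto\mathcal G^0\left(\int_0^\cdot v(s)\,\rd s\right)$ on $S_{2a}$ as a deterministic instance of \textbf{(C1)}'', but \textbf{(C1)} as stated only governs limits of $\mathcal G^\epsilon$ along $\epsilon\to0$ with the noise present, so it does not literally yield this continuity; indeed, in the proof of Theorem \ref{th:LDPexact} the paper establishes such continuity separately (Lemma \ref{eq:weakcm} with $\tau_\epsilon\equiv0$) precisely in order to verify \textbf{(C2)}. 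The gap is harmless, because compactness of the level sets follows from \textbf{(C2)} alone: by definition of the infimum, $\{x:I(x)\le a\}=\bigcap_{\delta>0}\left\{\mathcal{G}^{0}\left(\int_{0}^\cdot v(s)\,\rd s\right): v\in S_{2(a+\delta)}\right\}$, a decreasing intersection of compact sets, hence compact; this also supplies the lower semicontinuity you need without any continuity of $\mathcal G^0$.
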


Next, we consider the following small perturbation to \eqref{eq.model}:\ 
\begin{align} \label{eq.model-small}
\partial_t^{\alpha} X^\epsilon(t) + A^{\beta} X^\epsilon(t) = F(X^\epsilon(t)) + \sqrt{\epsilon} \, \cI {_t^{\gamma}} \dot{W}(t), \qquad \forall\, t \in (0,T]
\end{align}
with $X^\epsilon(0) = X_0$. Define the functional $\mathcal G^\epsilon$ as the measurable map associating $W$ to the solution $X^\epsilon$ of \eqref{eq.model-small}, i.e., $X^\epsilon=\mathcal G^\epsilon(W)$ for $\epsilon>0$. For any control $v\in\mathcal A_N$ and $\epsilon>0$, the Girsanov theorem (see e.g., \cite[Theorem 10.14]{DaPrato2014Book}) indicates that $\widetilde W^{\epsilon,v}:=W+\frac{1}{\sqrt\epsilon}\int_0^\cdot v(s)\rd s$ is an $H_1$-valued $Q_{\mathbb J}$-Wiener process under $\widetilde\hP^{\epsilon,v}$, where
\begin{equation*}
\frac{\rd\widetilde\hP^{\epsilon,v}}{\rd\hP}:=\exp\left(-\frac{1}{\sqrt\epsilon}\int_0^T( v(s),\rd W(s))_0-\frac{1}{2\epsilon}\int_0^T|v(s)|_0^2\rd s\right).
\end{equation*}
Therefore $X^{\epsilon,v}:=\mathcal G^\epsilon(\widetilde W^{\epsilon,v})$ is the unique mild solution of \eqref{eq.model-small} under $\widetilde\hP^{\epsilon,v}$, with $(X^\epsilon,W)$ replaced by $(X^{\epsilon,v},\widetilde W^{\epsilon,v})$. Since $\hP$ is equivalent to $\widetilde\hP^{\epsilon,v}$, $X^{\epsilon,v}$ is also the unique mild solution of the following controlled equation
\begin{align} \label{eq.controlled-eq}
\partial_t^{\alpha} X^{\epsilon,v}(t) + A^{\beta} X^{\epsilon,v}(t) = F(X^{\epsilon,v}(t)) +\cI {_t^{\gamma}} v(t) + \sqrt{\epsilon}\cI {_t^{\gamma}} \dot{W}(t), \qquad \forall\, t \in (0,T]
\end{align}
with $X^{\epsilon,v}(0) = X_0$, under $\hP$. Heuristically, we observe that as $\epsilon\to0$, the controlled equation \eqref{eq.controlled-eq} formally reduces to the following skeleton equation
\begin{align} \label{eq.skeleton-eq}
\partial_t^{\alpha} Z^{v}(t) + A^{\beta} Z^{v}(t) = F(Z^{v}(t)) +\cI {_t^{\gamma}} v(t), \qquad \forall\, t \in (0,T] 
\end{align}
with $Z^v(0) = X_0$. Similarly to the proof of Theorem \ref{thm.ExisUniq}, one also has that \eqref{eq.skeleton-eq} admits a unique mild solution $Z^v=:\mathcal G^0\left(\int_0^\cdot v(s)\rd s\right)$ in $\mathcal C([0,T],H)$, where
\begin{align*}
Z^v(t) = \cS_{1-\alpha}(t) X_0 + \int_0^t \cS_{0}(t-s) F(Z^v(s)) \rd s + \int_0^t \cS_{\gamma}(t-s) v(s)\rd s, \qquad t \in [0,T].
\end{align*}

\begin{Theo}\label{th:LDPexact}
Let $X_0 \in L^p(\Omega,\dot{H}^{2\beta})$ for some $p \geq 2$ and Assumptions \ref{ass.Noise}--\ref{ass.Lip1} hold.
Then
 the solution $\{X^{\epsilon}\}_{\epsilon>0}$ to \eqref{eq.model-small} satisfies an LDP on $\mathcal C([0,T],H)$ as $\epsilon\to 0$ with the good rate function $I$ given by
\begin{equation*}
I(x)=\inf_{\left\{v\in L^2([0,T],H_0),\,x=Z^v\right\}}\frac12 \int_0^T|v(s)|_0^2\rd s,\quad x\in\mathcal C([0,T],H).
\end{equation*}
\end{Theo}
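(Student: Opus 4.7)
The plan is to invoke the Budhiraja--Dupuis weak convergence criterion (Proposition \ref{prop:criterion}) with $\mathcal{E}=\mathcal C([0,T],H)$ and with $\mathcal G^\epsilon$ defined as the measurable map sending a $Q_{\mathbb J}$-Wiener path to the unique mild solution $X^\epsilon$ of \eqref{eq.model-small}. By the Girsanov identification already spelled out in the excerpt, $X^{\epsilon,v}=\mathcal G^\epsilon(\widetilde W^{\epsilon,v})$ solves the controlled equation \eqref{eq.controlled-eq}, so the two conditions to verify reduce to: (C1) $X^{\epsilon,v^\epsilon}\Rightarrow Z^v$ in $\mathcal C([0,T],H)$ whenever $v^\epsilon\Rightarrow v$ as $S_N$-valued random elements (equipped with the weak $L^2([0,T],H_0)$ topology), and (C2) compactness of $\{Z^v:v\in S_N\}\subset \mathcal C([0,T],H)$.

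The first preparatory step is to extend the well-posedness and regularity of Theorem \ref{thm.ExisUniq} and Proposition \ref{prop.Regu} to \eqref{eq.controlled-eq} and \eqref{eq.skeleton-eq}. For any $v\in S_N$, the deterministic convolution $\int_0^t\cS_\gamma(t-s)v(s)\rd s$ is controlled, via the Cauchy--Schwarz inequality and Lemma \ref{lem.Opera}(1), by
\begin{equation*}
\Bigl\|\int_0^t\cS_\gamma(t-s)v(s)\rd s\Bigr\|\le \Bigl(\int_0^t\|\cS_\gamma(t-s)\mathbb J\|_{\cL_2(H_1,H)}^2\rd s\Bigr)^{1/2}\Bigl(\int_0^T|v(s)|_0^2\rd s\Bigr)^{1/2}\le C\sqrt{N},
\end{equation*}
and analogous Hölder-type bounds follow along the lines of Proposition \ref{prop.Lambda}. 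Combining these with a singular Gr\"onwall argument yields uniform-in-$(\epsilon,v)$ estimates for $X^{\epsilon,v}$ and $Z^v$ in $\cC([0,T],H)$ together with moduli of continuity.

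For (C2), I would first prove that $v\mapsto Z^v$ is continuous from $(S_N,\text{weak})$ into $\cC([0,T],H)$. The essential point is the compactness of the linear operator $\mathcal K:v\mapsto\int_0^\cdot\cS_\gamma(\cdot-s)v(s)\rd s$ from $L^2([0,T],H_0)$ into $\cC([0,T],H)$, which I would establish by verifying equicontinuity and pointwise relative compactness (via the embedding $\dot H^\rho\hookrightarrow H$ for $\rho>0$ guaranteed by Assumption \ref{ass.Noise}) and then applying Arzelà--Ascoli. The nonlinear term $v\mapsto \int_0^\cdot\cS_0(\cdot-s)F(Z^v(s))\rd s$ is treated by combining Lipschitz continuity of $F$ with a singular Gr\"onwall inequality. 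Since $S_N$ is weakly compact in the Hilbert space $L^2([0,T],H_0)$, the image $\{Z^v:v\in S_N\}$ is then compact in $\cC([0,T],H)$.

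For (C1), I would use the Jakubowski--Skorokhod representation to pass, on a new probability space, from $v^\epsilon\Rightarrow v$ to a.s.\ weak convergence, and then decompose
\begin{equation*}
X^{\epsilon,v^\epsilon}(t)-Z^v(t)=\int_0^t\cS_0(t-s)\bigl[F(X^{\epsilon,v^\epsilon}(s))-F(Z^v(s))\bigr]\rd s+\int_0^t\cS_\gamma(t-s)(v^\epsilon-v)(s)\rd s+\sqrt\epsilon\,\Lambda(t).
\end{equation*}
The stochastic term vanishes in $\cC([0,T],H)$ a.s.\ by Proposition \ref{prop.Lambda} together with Kolmogorov's continuity criterion; the second term goes to zero by the compactness of $\mathcal K$ proved in (C2) (weak convergence composed with a compact map gives strong convergence); the drift term closes the loop via the Lipschitz bound in Assumption \ref{ass.Lip1} and a singular Gr\"onwall inequality. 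The main obstacle is the compactness of $\mathcal K$ in the presence of the singular kernel $\cS_\gamma(t)\sim t^{\alpha+\gamma-1}$ as $t\to 0^+$, which becomes delicate when $\alpha+\gamma-\tfrac12$ is small; here the Hilbert--Schmidt condition $\|A^{(r-\kappa)/2}Q^{1/2}\|_{\cL_2(H)}<\infty$ of Assumption \ref{ass.Noise} is exactly what allows the required integrability and the gain of a fractional spatial derivative needed for the Arzelà--Ascoli argument. Once (C1) and (C2) are in place, Proposition \ref{prop:criterion} yields the LDP with the stated good rate function.
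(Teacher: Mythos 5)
Your proposal is correct in overall strategy but takes a genuinely different route from the paper in verifying condition (C1). Both arguments rely on Proposition \ref{prop:criterion} and both hinge on the compactness of the convolution operator $\Pi(g)(t)=\int_0^t\cS_\gamma(t-s)g(s)\rd s$ (your $\mathcal K$), which the paper isolates as Lemma \ref{lem:tight-I}, so your (C2) argument and preparatory a priori estimates are essentially the paper's. Where you diverge is in the convergence statement: you propose to apply Skorokhod to the controls, subtract the skeleton equation from the controlled equation, and close the resulting integral inequality by a singular Gr\"onwall bound, giving
\begin{equation*}
\|X^{\epsilon,v^\epsilon}-Z^v\|_{\cC([0,T],H)}\le C\bigl(\|\Pi(v^\epsilon-v)\|_{\cC([0,T],H)}+\sqrt{\epsilon}\,\|\Lambda\|_{\cC([0,T],H)}\bigr),
\end{equation*}
with the first term controlled by compactness of $\Pi$ and the second by the fact that $\Lambda$ has a fixed law in $\cC([0,T],H)$. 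The paper instead proves tightness of $\{(X^{\tau_\epsilon,v^\epsilon},v^\epsilon)\}$ in $\cC([0,T],H)\otimes S_N$ (via the regularity estimates \eqref{eq:Xepsilonbound}--\eqref{eq:Lambdar} and Arzel\`a--Ascoli), applies Prokhorov/Skorokhod to the pair, and then identifies the limit through the deterministic continuous test functional $\Psi_t$, showing $\hE[\Psi_t]\le(C\sqrt{\tau_\epsilon})\wedge1\to0$. The paper also encodes both (C1) and (C2) into a single Lemma \ref{eq:weakcm} parameterized by $\tau_\epsilon$, whereas you handle them separately.

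Your route is more direct and works here precisely because $F$ is globally Lipschitz (Assumption \ref{ass.Lip1}), so that the difference equation for $X^{\epsilon,v^\epsilon}-Z^v$ can be closed pathwise; the paper's tightness-plus-identification route is more robust and is the standard template when no pathwise contraction is available. The one point you gloss over is exactly what gets transferred by Skorokhod: since $X^{\epsilon,v^\epsilon}$ depends on the pair $(v^\epsilon,W)$, applying Skorokhod only to $v^\epsilon$ leaves no noise process on the new space to define the term $\sqrt{\epsilon}\Lambda$; you should represent the pair $(v^\epsilon,W)$ jointly, then define $\Lambda$ on the new space via the (deterministic-integrand) stochastic convolution, and use that $\|\Lambda\|_{\cC([0,T],H)}$ has the same law for every $\epsilon$ so that $\sqrt{\epsilon}\|\Lambda\|_{\cC([0,T],H)}\to0$ in probability. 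Alternatively, you can sidestep the representation entirely by writing $X^{\epsilon,v^\epsilon}=Z^{v^\epsilon}+(X^{\epsilon,v^\epsilon}-Z^{v^\epsilon})$, proving $X^{\epsilon,v^\epsilon}-Z^{v^\epsilon}\to0$ in probability by Gr\"onwall on the original space (since both solve equations with the same control $v^\epsilon$, only the $\sqrt{\epsilon}\Lambda$ term survives), and using the continuity of $v\mapsto Z^v$ from (C2) together with the continuous mapping theorem to conclude $Z^{v^\epsilon}\Rightarrow Z^v$; this is the cleanest way to realize your argument.
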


To facilitate the proof of Theorem \ref{th:LDPexact}, we prepare the following two lemmas.

\begin{Lem}\label{lem:tight-I}
Let Assumption \ref{ass.Noise} hold.
Define the operator $\Pi: L^2([0,T],H_0)\to \mathcal C([0,T],H)$ by 
\begin{align*}
\Pi(g)(t) := \int_0^t \cS_{\gamma}(t-s) g(s)\rd s, \qquad t\in[0,T],\quad g\in L^2([0,T],H_0).
\end{align*}
Then $\Pi$ is a compact operator, i.e., for any $N>0$,
$\Pi(S_N)$ is pre-compact in $\mathcal C([0,T],H)$.
\end{Lem}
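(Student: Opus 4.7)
The plan is to apply the Arzelà--Ascoli theorem to the family $\Pi(S_N) \subset \cC([0,T],H)$. This reduces the task to verifying \emph{(i)} pointwise relative compactness of $\{\Pi(g)(t) : g \in S_N\}$ in $H$ for each $t \in [0,T]$, and \emph{(ii)} equicontinuity of $\Pi(S_N)$ with values in $H$. The key technical device throughout is that, since $g(s) \in H_0 = Q^{1/2}(H)$, the deterministic counterpart of It\^o's isometry is the Cauchy--Schwarz estimate $\|\cO g(s)\| \le \|\cO Q^{1/2}\|_{\cL_2(H)} |g(s)|_0$, which allows me to recycle the Hilbert--Schmidt estimates already developed for the stochastic convolution $\Lambda$ in Proposition \ref{prop.Lambda}.

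For (i), I would show that for some $\rho > 0$ chosen so that $\rho - r + \kappa$ lies in the admissible range (specifically, $\rho - r + \kappa < 2\beta(\alpha+\gamma-\frac{1}{2})/\alpha$, a range that is nonempty by the definition of $\kappa$), the set $\{\Pi(g)(t) : g \in S_N,\, t \in [0,T]\}$ is uniformly bounded in $\dot{H}^{\rho}$. The estimate proceeds via Cauchy--Schwarz in $H_0$, the factorization $A^{\rho/2}\cS_\gamma(t-u)Q^{1/2} = A^{(\rho-r+\kappa)/2}\cS_\gamma(t-u) \cdot A^{(r-\kappa)/2}Q^{1/2}$, and the standard bounds from Assumption \ref{ass.Noise} and Lemma \ref{lem.Opera}(1). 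Since $\dot{H}^{\rho} \hookrightarrow H$ is a compact embedding (as $A^{-\rho/2}$ is a compact operator), pointwise relative compactness in $H$ follows. For (ii), I would use the splitting
\begin{align*}
\Pi(g)(t) - \Pi(g)(s) = \int_s^t \cS_\gamma(t-u) g(u)\rd u + \int_0^s \bigl(\cS_\gamma(t-u) - \cS_\gamma(s-u)\bigr) g(u)\rd u,
\end{align*}
and bound each summand in $H$-norm via Cauchy--Schwarz in $H_0$, reducing the problem to the time-integrated $\cL_2$-bounds $\int_s^t \|\cS_\gamma(t-u)Q^{1/2}\|_{\cL_2(H)}^2\rd u$ and $\int_0^s \|[\cS_\gamma(t-u)-\cS_\gamma(s-u)]Q^{1/2}\|_{\cL_2(H)}^2\rd u$ that were controlled in the proof of Proposition \ref{prop.Lambda} (case $\rho=0$). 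The same application of Lemma \ref{lem.Opera} and Lemma \ref{le.DiffOpera} then yields $\|\Pi(g)(t) - \Pi(g)(s)\| \le C\sqrt{N}\,(t-s)^{\delta}$ for some $\delta > 0$ depending only on $\alpha,\beta,\gamma,\kappa,r$, giving uniform equicontinuity over $S_N$.

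The only subtle point is the simultaneous choice of $\rho > 0$ in step (i) so that both the Hilbert--Schmidt integrability $\int_0^t \|A^{\rho/2}\cS_\gamma(t-u)Q^{1/2}\|_{\cL_2(H)}^2 \rd u < \infty$ is preserved and the compact embedding $\dot{H}^{\rho} \hookrightarrow H$ is available; this is where the condition $\alpha+\gamma > \frac{1}{2}$ in Assumption \ref{ass.Noise}, which ensures $\kappa > 0$ and hence the admissible range for $\rho$ is nonempty, enters crucially. Once such a $\rho$ is fixed, steps (i) and (ii) combine via the Arzel\`a--Ascoli theorem to deliver the pre-compactness of $\Pi(S_N)$ in $\cC([0,T],H)$.
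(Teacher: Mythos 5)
Your proposal is correct and takes essentially the same approach as the paper: both verify the Arzel\`a--Ascoli hypotheses by showing that $\Pi(S_N)$ is uniformly bounded in $\dot{H}^{\rho}$ for some $\rho>0$ (the paper fixes $\rho=r$) and uniformly H\"older continuous in $H$, using the Cauchy--Schwarz trick $\|\cO g(u)\|\le\|\cO Q^{1/2}\|_{\cL_2(H)}|g(u)|_0$ together with the operator estimates of Lemma~\ref{lem.Opera} and Lemma~\ref{le.DiffOpera}. The paper's auxiliary compact sets $\mathbb K_a^{\mu,\theta}$ merely package this same Arzel\`a--Ascoli argument, so the substance of the two proofs coincides.
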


\begin{proof}

For $\mu>0$ and $\theta\in(0,1]$, denote
\begin{equation}\label{eq:Ka}
\mathbb K_{a}^{\mu,\theta}:=\left\{y\in \mathcal C([0,T],\dot{H}^{\mu}):~\sup_{t\in[0,T]}\|y(t)\|_\mu+\sup_{s\neq t}\frac{\|y(t)-y(s)\|}{|t-s|^\theta}\le a\right\},\quad a>0.
\end{equation}
Then $\mathbb K_{a}^{\mu,\theta}$ is uniformly equi-continuous in $t\in[0,T]$ and the closure of $\mathbb K_{a}^{\mu,\theta}(t):=\{v(t):v\in\mathbb K_{a}^{\mu,\theta}\}$ is compact in $H$ for every $t\in[0,T]$ since the inclusion $\dot H^\mu\hookrightarrow H$ is a compact embedding for $\mu>0$. Therefore, $\mathbb K_{a}^{\mu,\theta}$ is pre-compact in $\mathcal C([0,T],H)$ due to the Arzel\`a--Ascoli theorem. 

By Cauchy--Schwartz's inequality and Lemma \ref{lem.Opera}(1), for any $\rho \in [r-2\kappa, r]$ and $g\in S_N$,
\begin{align}\label{eq:vepsilon}
\Big\| \int_{s}^{t} \cS_{\gamma}(t-u) g(u) \rd u\Big\|_{\rho}
&\le C\int_{s}^{t} \|A^{\frac{\kappa-r+\rho}{2}}\cS_{\gamma}(t-u) \|_{\cL(H)}\|A^{\frac{r-\kappa}{2}}Q^{\frac12}\|_{\cL_2(H)}|g(u)|_0 \rd u \nonumber\\
&\le C (t-s)^{ \alpha + \gamma - \frac{1}{2} - \frac{\alpha}{2\beta} (\kappa- r+\rho)^{+} } 
\end{align}
for any $0\le s<t\le T$. Similarly to \eqref{eq.Lambda2sub}, for any $\rho \in [r-2\kappa, r]$ and $g\in S_N$,
\begin{align}\label{eq:vepsilonts}
\Big\| \int_{0}^{s} (\cS_{\gamma}(t-u) - \cS_{\gamma}(s-u)) g(u)\rd u \Big\|_{\rho} 
\leq C (t-s)^{\min\{\alpha+\gamma-\frac{1}{2}-\frac{\alpha}{2\beta} (\kappa- r+\rho)^{+}, 1-\varepsilon\}}
\end{align}
for any $0\le s<t\le T$. Taking $\rho=r$ and $s=0$ in \eqref{eq:vepsilon}, as well as $\rho=0$ in \eqref{eq:vepsilon} and \eqref{eq:vepsilonts}, we have that for any $g\in S_N$,
\begin{align*}
\sup_{t\in[0,T]}\|\Pi(g)(t)\|_r+\sup_{s\neq t}\frac{\|\Pi(g)(t)-\Pi(g)(s)\|}{|t-s|^{\min\{ \frac{\alpha r}{2\beta} + (\gamma-\frac{1}{2})^{+}, 1-\varepsilon \}}}\le a 
\end{align*}
for some $a\in(0,\infty)$. This implies $\Pi(S_N)\subset \mathbb{K}_{a}^{r,\min\{\frac{\alpha r}{2\beta} + (\gamma-\frac{1}{2})^{+}, 1-\varepsilon \}}$, and the proof is completed. \qed
\end{proof}

Let $\tau:[0,1)\rightarrow[0,1)$ be a measurable function satisfying $\tau_\epsilon\rightarrow\tau_0 := 0$ as $\epsilon\to0$. 

\begin{Lem}\label{eq:weakcm}
Suppose that the assumptions of Theorem \ref{th:LDPexact} hold. 
Let $N<\infty$ and $\{v^{\epsilon}\}_{\epsilon>0} \subset \mathcal{A}_{N}$ converge in distribution (as $S_{N}$-valued random elements) to $v$. Then $X^{\tau_\epsilon,v^\epsilon}\to Z^{v}$ in distribution as $\epsilon\to 0$.
\end{Lem}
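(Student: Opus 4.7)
The plan is to use the mild form of both equations to reduce the convergence of $X^{\tau_\epsilon,v^\epsilon}$ to $Z^v$ to three ingredients: vanishing of the rescaled stochastic convolution, strong convergence of $\Pi(v^\epsilon)$ to $\Pi(v)$ in $\mathcal C([0,T],H)$, and a singular-type Gr\"onwall argument absorbing the Lipschitz drift. Recalling $\Pi$ from Lemma \ref{lem:tight-I} and subtracting the mild identities associated with \eqref{eq.controlled-eq} and \eqref{eq.skeleton-eq}, one obtains
\begin{align*}
X^{\tau_\epsilon,v^\epsilon}(t) - Z^v(t) = \int_0^t \cS_0(t-s)\bigl[F(X^{\tau_\epsilon,v^\epsilon}(s)) - F(Z^v(s))\bigr]\rd s + \bigl(\Pi(v^\epsilon) - \Pi(v)\bigr)(t) + \sqrt{\tau_\epsilon}\,\Lambda(t).
\end{align*}
By Assumption \ref{ass.Lip1} and Lemma \ref{lem.Opera}(1), the first summand is bounded in $H$ by $C\int_0^t(t-s)^{\alpha-1}\|X^{\tau_\epsilon,v^\epsilon}(s) - Z^v(s)\|\,\rd s$, and the singular Gr\"onwall inequality then gives the deterministic pathwise bound
\begin{align*}
\sup_{t\in[0,T]}\|X^{\tau_\epsilon,v^\epsilon}(t) - Z^v(t)\| \le C\Bigl(\sup_{t\in[0,T]}\|(\Pi(v^\epsilon) - \Pi(v))(t)\| + \sqrt{\tau_\epsilon}\sup_{t\in[0,T]}\|\Lambda(t)\|\Bigr),
\end{align*}
with $C$ independent of $\epsilon$ and $\omega$, so it suffices to show that the right-hand side tends to zero in probability.

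The stochastic term is harmless: Proposition \ref{prop.Lambda} combined with the Kolmogorov continuity theorem yields $\sup_{t\in[0,T]}\|\Lambda(t)\|\in L^p(\Omega)$, whence $\sqrt{\tau_\epsilon}\sup_{t\in[0,T]}\|\Lambda(t)\|\to 0$ in $L^p(\Omega)$ and hence in probability. For the control term, I invoke the Skorokhod representation theorem on $S_N$, which endowed with the weak $L^2([0,T],H_0)$-topology is a compact Polish space, to pass to versions of the controls on a common probability space such that $v^\epsilon\to v$ almost surely in that weak topology; this substitution leaves the law of $X^{\tau_\epsilon,v^\epsilon}$ in $\mathcal C([0,T],H)$ unchanged. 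Since $\Pi$ is compact by Lemma \ref{lem:tight-I}, and a compact linear operator sends weakly convergent sequences to strongly convergent ones, it follows that $\Pi(v^\epsilon)\to\Pi(v)$ in $\mathcal C([0,T],H)$ almost surely along this realization.

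Combining the two limits with the Gr\"onwall bound yields $X^{\tau_\epsilon,v^\epsilon}\to Z^v$ in probability, and therefore in distribution, in $\mathcal C([0,T],H)$, which is the desired conclusion. The main obstacle is precisely the weak-to-strong upgrade for $\Pi$: pointwise-in-$t$ convergence $\Pi(v^\epsilon)(t)\to\Pi(v)(t)$ in $H$ is immediate from weak $L^2$-convergence of the controls, but pointwise convergence alone is insufficient on the infinite-dimensional target $\mathcal C([0,T],H)$. Promoting it to uniform convergence relies crucially on the quantitative H\"older equicontinuity and $\dot H^r$-spatial regularity of the family $\Pi(S_N)$ supplied by Lemma \ref{lem:tight-I}, which, via a subsequence and uniqueness-of-limit argument, identifies every cluster point of $\{\Pi(v^\epsilon)\}$ with $\Pi(v)$.
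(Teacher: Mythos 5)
Your strategy is genuinely different from the paper's: you exploit the additive noise structure to obtain a deterministic Lipschitz stability bound via the singular Gr\"onwall inequality, and you try to reduce the problem to convergence of the forcing terms. The paper instead establishes tightness of $\{X^{\tau_\epsilon,v^\epsilon}\}_{\epsilon}$ in $\mathcal{C}([0,T],H)$ (via the $\dot H^r$-bounds and the Garsia--Rodemich--Ramsay / Kolmogorov machinery), then applies Prokhorov and Skorokhod to the \emph{pair} $(X^{\tau_\epsilon,v^\epsilon},v^\epsilon)$ in $\mathcal{C}([0,T],H)\times S_N$, and finally identifies the limit through the continuous bounded functional $\Psi_t$, using the quantitative estimate $\hE[\Psi_t(X^{\tau_\epsilon,v^\epsilon},v^\epsilon)]\le C\sqrt{\tau_\epsilon}$. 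Your Gr\"onwall bound, your treatment of $\sqrt{\tau_\epsilon}\Lambda$, and the weak-to-strong upgrade for the compact operator $\Pi$ are all correct.

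The gap is in the Skorokhod step. You represent only the marginal laws of $\{v^\epsilon\}$ on a new probability space and assert that ``this substitution leaves the law of $X^{\tau_\epsilon,v^\epsilon}$ unchanged.'' But $X^{\tau_\epsilon,v^\epsilon}$ is, through the mild formula \eqref{eq:Xep}, a functional of the pair $(v^\epsilon, W)$ (equivalently, of $(v^\epsilon,\Lambda)$ and $X_0$): its law is determined by the \emph{joint} law of $v^\epsilon$ and the Wiener process, not by the marginal law of $v^\epsilon$ alone. Skorokhod applied to $v^\epsilon$ by itself supplies no Wiener process on the new space, let alone one with the correct joint law with the representative $\tilde v^\epsilon$, so the constructed $\tilde X^\epsilon$ need not have the same law as $X^{\tau_\epsilon,v^\epsilon}$. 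A second symptom of the same issue is the conclusion ``$X^{\tau_\epsilon,v^\epsilon}\to Z^v$ in probability'': on the original space $v$ is only a distributional limit and $Z^v$ need not be defined there, while on the Skorokhod space the original $X^{\tau_\epsilon,v^\epsilon}$ is not available. To repair the argument you could either (i) apply Skorokhod to the pair $(v^\epsilon,\Lambda)$ after observing that this pair is tight, pass to a subsequence, and combine with a subsequence principle, or (ii) avoid Skorokhod entirely: since $\sqrt{\tau_\epsilon}\Lambda\to 0$ in probability (to a deterministic limit) and $\Pi(v^\epsilon)\to\Pi(v)$ in distribution, Slutsky's theorem yields joint convergence $(\Pi(v^\epsilon),\sqrt{\tau_\epsilon}\Lambda)\to(\Pi(v),0)$ in distribution, and your Gr\"onwall estimate shows that the solution map from the forcing data to the mild solution is continuous, so the continuous mapping theorem concludes. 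Either repair makes your more elementary, stability-based route fully rigorous for the additive noise case; the paper's tightness-plus-$\Psi_t$ argument is heavier but also adapts to settings where such a pathwise Lipschitz bound is unavailable.
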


\begin{proof}
By \eqref{eq.controlled-eq}, for $t\in[0,T]$,
\begin{align}\label{eq:Xep}
X^{\tau_\epsilon,v^\epsilon}(t) &= \cS_{1-\alpha}(t) X_0 + \int_0^t \cS_{0}(t-s) F(X^{\tau_\epsilon,v^\epsilon}(s)) \rd s 
+ \int_0^t \cS_{\gamma}(t-s) v^\epsilon(s)\rd s+\sqrt{\tau_\epsilon}\Lambda(t),
\end{align}
where $\Lambda(t)$ is defined in \eqref{eq.TwoConvo}. 
In light of \eqref{eq:vepsilon}, for any $\rho \in [r-2\kappa, r]$ and $v^\epsilon\in\mathcal A_{N}$,
 \begin{align} \label{eq.intvup}
 \Big\| \int_{s}^{t} \cS_{\gamma}(t-u) v^\epsilon(u) \rd u\Big\|_{\rho}
 &\le C (t-s)^{ \alpha + \gamma - \frac{1}{2} - \frac{\alpha}{2\beta} (\kappa- r+\rho)^{+} },\quad\textup{a.s.},  
 \end{align}
 for any $0\le s<t\le T$. Hereafter, the constant $C$ is independent of $\epsilon\in[0,1]$ but may depend on $N$. 
Then similarly to Theorem \ref{prop.Regu}(1), it holds that 
\begin{equation}\label{eq:Xepsilonbound}
\| X^{\tau_\epsilon,v^\epsilon}(t) \|_{L^p(\Omega,\dot{H}^{r})}\le C,\qquad\forall\,t\in[0,T].
\end{equation}
From \eqref{eq:vepsilonts}, we obtain that for any $v^\epsilon\in\mathcal A_{N}$ and $\rho \in [r-2\kappa, r]$, 
\begin{align*}
\Big\| \int_{0}^{s} (\cS_{\gamma}(t-u) - \cS_{\gamma}(s-u)) v^\epsilon(u)\rd u \Big\|_{L^p(\Omega,\dot{H}^{\rho})}
\leq C (t-s)^{\min\{\alpha+\gamma-\frac{1}{2}-\frac{\alpha}{2\beta} (\kappa- r+\rho)^{+}, 1-\varepsilon\}}
\end{align*}
for any $0\le s<t\le T$. Moreover, similarly to Theorem \ref{prop.Regu}(2), we also have that for any $p>1$,
\begin{align} \label{Holder-Xepsilon}
\| X^{\tau_\epsilon,v^\epsilon}(t) - X^{\tau_\epsilon,v^\epsilon}(s)\|_{L^p(\Omega,H)}
\leq C (t-s)^{ \min\{ \alpha, \frac{\alpha r}{2\beta} + (\gamma-\frac{1}{2})^{+} \} }, \qquad \forall \, 0\leq s < t \leq T,
\end{align}
Here and after, we take $\theta\in(0,\min\{ \alpha, \frac{\alpha r}{2\beta} + (\gamma-\frac{1}{2})^{+} \})$. Then the Garsia--Rodemich--Ramsay lemma (see e.g., \cite[Theorem 2.1]{RY99}) together with \eqref{Holder-Xepsilon} yields 
\begin{align}\label{eq:XWtheta}
\hE \left[ \sup_{s\neq t} \frac{\|X^{\tau_\epsilon,v^\epsilon}(t)-X^{\tau_\epsilon,v^\epsilon}(s)\|}{|t-s|^\theta} \right] 
\leq C.
\end{align}

On the other hand, applying Proposition \ref{prop.Lambda} with $\rho=r$ and the Garsia--Rodemich--Ramsay lemma (see e.g., \cite[Theorem 2.1]{RY99}) yields that for any $p>1$,
\begin{equation}\label{eq:Lambdar}
\hE\left[\sup_{t\in[0,T]}\|\Lambda(t)\|_r^p\right]\le C.
\end{equation}
Introducing $U^{\tau_\epsilon,v^\epsilon}:=X^{\tau_\epsilon,v^\epsilon}-\sqrt{\tau_\epsilon}\Lambda$, then by Lemma \ref{lem.Opera}(1), Assumption \ref{ass.Lip1} and \eqref{eq.intvup}, we have
\begin{align*}
\|U^{\tau_\epsilon,v^\epsilon}(t)\|_r 
&\leq \| \cS_{1-\alpha}(t) \|_{\cL(H)} \| X_0 \|_r + \int_0^t \| A^{\frac{r}{2}} \cS_{0}(t-s) \|_{\cL(H)} \| F(X^{\tau_\epsilon,v^\epsilon}(s)) \| \rd s + C \\
&\leq C \| X_0 \|_r+ C \int_0^t (t-s)^{\alpha-1-\frac{\alpha r}{2\beta}} (1 + \|X^{\tau_\epsilon,v^\epsilon}(s)\|) \rd s + C \\
&\leq C \| X_0 \|_r+ \left(\int_0^t(t-s)^{q(\alpha-1-\frac{\alpha r}{2\beta})}\rd r\right)^{\frac1q}\left(\int_0^t\|X^{\tau_\epsilon,v^\epsilon}(s)\|^{q^\prime}\rd s\right)^{\frac{1}{q^\prime}}+C,
\end{align*}
for all $t\in(0,T]$, where $1/q+1/q^\prime=1$ and $q$ is large enough such that $q(\alpha-1-\frac{\alpha r}{2\beta})>-1$. Then taking supremum over $t\in[0,T]$ and taking expectations, from \eqref{eq:Xepsilonbound} and the assumption $X_0\in L^p(\Omega,\dot{H}^{2\beta})$, we deduce that
$$\hE\left[\sup_{t\in[0,T]}\|U^{\tau_\epsilon,v^\epsilon}(t)\|_r\right]\le C.$$
This in combination with \eqref{eq:Lambdar} and \eqref{eq:XWtheta} indicates
\begin{align*}
\hE\left[\sup_{t\in[0,T]}\|X^{\tau_\epsilon,v^\epsilon}(t)\|_r+
\sup_{s\neq t}\frac{\|X^{\tau_\epsilon,v^\epsilon}(t)-X^{\tau_\epsilon,v^\epsilon}(s)\|}{|t-s|^\theta}\right]\le C.
\end{align*}
Hence, further applying the Markov inequality and the tightness of $\mathbb K_a^{r,\theta}$ (see \eqref{eq:Ka}) leads to the tightness of $\{X^{\tau_\epsilon,v^\epsilon}\}_{\epsilon>0}$ in $\mathcal C([0,T],H)$. 

Since $S_N$ endowed with the weak topology of $L^2([0,T],H_0)$ is a compact Polish space \cite{BDM08}, $\{v^\epsilon\}_{\epsilon>0}$ is also tight in $L^2([0,T],H_0)$. As a result, $\{(X^{\tau_\epsilon,v^\epsilon},v^\epsilon)\}_{\epsilon>0}$ is tight in $\mathcal C([0,T],H)\otimes S_N$. In view of the Prokhorov theorem, for any sequence $\{\epsilon_n\}_{n\ge1}$ satisfying $\lim_{n\to \infty}\epsilon_n= 0$, there exists a subsequence, still denoted by $\{\epsilon_n\}_{n\ge1}$, such that $(X^{\tau_{\epsilon_n},v^{\epsilon_n}},v^{\epsilon_n})$ converges in distribution as $n\to \infty$. Therefore, further utilizing the Skorohod representation theorem (see e.g., \cite[Theorem 9.1.7]{KD01}) allows us to obtain another probability space $(\bar\Omega,\bar{\mathscr F},\bar\hP)$ carrying a random variable $(\bar{X},\bar{v})$ such that 
\begin{gather}\label{eq:vX}
v^{\epsilon_n}\overset{w}{\rightarrow}\bar{v}\quad \text{in}~ S_N,\qquad X^{\tau_{\epsilon_n},v^{\epsilon_n}}\rightarrow \bar{X}\quad\text{in}~\mathcal C([0,T],H),\qquad\text{in distribution}.
\end{gather}
Here, the notation $\overset{w}{\rightarrow}$ denotes the weak convergence.

It remains to prove that $\bar{X}\overset{d}{=} Z^v$, i.e., the law of $\bar{X}$ coincides with that of $Z^v$. To this end, introduce the map $\Psi_t: \mathcal C([0,T],H)\times S_N\to \mathbb R$ by
$$\Psi_t(\phi,f):=\Big\|\phi(t) - \cS_{1-\alpha}(t)\phi(0)- \int_0^t \cS_{0}(t-s) F(\phi(s)) \rd s - \int_0^t \cS_{\gamma}(t-s) f(s)\rd s\Big\|\wedge 1,$$
for $\phi\in\mathcal C([0,T],H)$ and $f\in S_N$. 
We firstly show that $\Psi_t$ is continuous. Assume that $\phi_n\to\phi$ in $\mathcal C([0,T],H)$ and $f_n\overset{w}{\rightarrow} f$ in $S_N$. Notice that $|\|a\|\wedge 1-\|b\|\wedge 1|\le \|a-b\|\wedge 1$ for any $a,b\in H$. Then by the Lipschitz continuity of $F$ and Lemma \ref{lem.Opera}(1), it holds that for any $t\in[0,T]$,
\begin{align*}
&\quad\ |\Psi_t(\phi_n,f_n)-\Psi_t(\phi,f)| \\ 
&\le \Big(\left\|\phi_n(t) -\phi(t)\right\| + \int_0^t \|\cS_{0}(t-s) \|_{\cL(H)}\left\|F(\phi_n(s))-F(\phi(s))\right\| \rd s + \left\|\Pi(f_n)(t)-\Pi(f)(t)\right\|\Big)\wedge 1\\
&\le \left( C\left\|\phi_n -\phi\right\|_{\mathcal C([0,T],H)} + \left\|\Pi(f_n)-\Pi(f)\right\|_{\mathcal C([0,T],H)}\right)\wedge 1,
\end{align*}
where $\Pi$ is defined in Lemma \ref{lem:tight-I}.
In light of the compactness of $\Pi$ (see Lemma \ref{lem:tight-I}) and $f_n\overset{w}{\rightarrow} f$ in $S_N$, we know that $\left\|\Pi(f_n)-\Pi(f)\right\|_{\mathcal C([0,T],H)}\to 0$ as $n\to\infty$. Thus $\Psi_t$ is continuous. Since $\Psi_t$ is continuous and bounded, it follows from \eqref{eq:vX} that
$\hE[\Psi_t(X^{\tau_{\epsilon_n},v^{\epsilon_n}},v^{\epsilon_n})]\to \hE^{\bar\hP}[\Psi_t(\bar{X},\bar{v})]$ as $n\to\infty$,
where $\hE^{\bar\hP}$ denotes the expectation with respect to $\bar\hP$.
In addition, the definition of $\Psi_t$, \eqref{eq:Xep} and Proposition \ref{prop.Lambda} imply
\begin{align*}
\hE[\Psi_t(X^{\tau_{\epsilon_n},v^{\epsilon_n}},v^{\epsilon_n})]=\left(\sqrt{\tau_{\epsilon_n}}\, \hE\left[\|\Lambda(t)\|\right]\right)\wedge 1\le \left(C\sqrt{\tau_{\epsilon_n}}\right)\wedge1\to 0, \qquad \mbox{as } n\to \infty,
\end{align*}
which leads to $\hE^{\bar\hP}[\Psi_t(\bar{X},\bar{v})]=0$, and thus for any $t\in[0,T]$,
\begin{align*}
\bar{X}(t) = \cS_{1-\alpha}(t) X_0 + \int_0^t \cS_{0}(t-s) F(\bar{X}(s)) \rd s + \int_0^t \cS_{\gamma}(t-s) \bar{v}(s)\rd s, \qquad \bar\hP\text{-a.s.}
\end{align*}
Then the uniqueness of the skeleton equation \eqref{eq.skeleton-eq} leads to $\bar{X}=\mathcal G^0\left(\int_0^\cdot \bar v(s)\rd s\right),$ $\bar\hP$\text{-a.s.} Since $\{v^{\epsilon_n}\}_{n\ge1}$ weakly converges to both $v$ and $\bar v$ in distribution, we have $\bar v\overset{d}=v$. Thus $\bar{X}=\mathcal G^0\left(\int_0^\cdot \bar v(s)\rd s\right)\overset{d}=\mathcal G^0\left(\int_0^\cdot v(s)\rd s\right)=Z^v,$ as required. The proof is completed. \qed 
\end{proof}

\textbf{Proof of Theorem \ref{th:LDPexact}.} In view of Proposition \ref{prop:criterion}, it suffices to show that \textbf{(C1)} and \textbf{(C2)} are fulfilled with $\mathcal E$ there replaced by $\mathcal C([0,T],H)$. The condition \textbf{(C1)} comes from Lemma \ref{eq:weakcm} by taking $\tau_\epsilon=\epsilon$ for $\epsilon\in[0,1)$. On the other hand, by taking $\tau_\epsilon=0$ for all $\epsilon\in[0,1)$ in Lemma \ref{eq:weakcm} and the uniqueness of the solution to the skeleton equation \eqref{eq.skeleton-eq}, we obtain that the mapping $S_N\ni v\mapsto Z^v\in \mathcal C([0,T],H)$ is continuous. This proves that the condition \textbf{(C2)} holds since $S_N$ endowed with the weak topology is a compact space. The proof is completed. \hfill$\Box$

\section{Mittag--Leffler Euler integrator }
\label{sec.MainResApp}

In this section, we develop the strong convergence analysis of the MLE integrator \eqref{eq.NumSol} for the model \eqref{eq.model}. The strong convergence provides a pathwise approximation of individual sample paths of the exact solution, and has been frequently applied in the design of the multilevel Monte Carlo algorithm. In addition, we also present the Freidlin--Wentzell type LDP for the continuified MLE integrator and the $\Gamma$-convergence analysis for the associated large deviation rate function.

\subsection{Convergence rate}

Let $\{X(t)\}_{t\in[0,T]}$ and $\{Y_m\}_{m=1}^M$ be the mild solution \eqref{eq.mildSol} of the model \eqref{eq.model} and its numerical solution given by the MLE integrator \eqref{eq.NumSol}, respectively. Based on the temporal H\"older continuity estimate \eqref{Holder-1}, a preliminary strong convergence order $\min\{ \alpha, \frac{\alpha r}{2\beta} + (\gamma-\frac{1}{2})^{+}\}$ can be obtained for the MLE integrator. However, since the numerical solution $\{Y_m\}_{m=1}^M$ uses the accurate information of the stochastic convolution $\Lambda(t)$, a higher strong convergence order than $\min\{ \alpha, \frac{\alpha r}{2\beta} + (\gamma-\frac{1}{2})^{+}\}$ can be expected, which is exactly the main aim of the following two theorems.

\begin{Theo} \label{th.MLEulerErrorLinear}
Let $X_0 \in L^2(\Omega,\dot{H}^{2\beta})$. Suppose that Assumption \ref{ass.Noise} holds and the mapping $F$ in \eqref{eq.model} is linear. Then there exists some positive constant $C = C(\alpha,\beta,\gamma,r,T,\varepsilon)$ such that for all $2 \leq m \leq M$,
\begin{align*}
\| X(t_m) - Y_m \|_{L^2(\Omega,H)}
\leq C t_m^{2\alpha-1} h^{ \min\{ \alpha + \frac{\alpha r}{2\beta} + (\gamma-\frac{1}{2})^{+}-\varepsilon, \, \alpha+\gamma -\varepsilon, \, 1 \} }, 
\end{align*}
where $\varepsilon> 0$ can be arbitrarily small.
\end{Theo}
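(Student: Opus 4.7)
The plan is to decompose the global error, bound each piece, and close with a singular Gronwall argument. Since the MLE scheme treats $\cS_{1-\alpha}(t_m)X_0$ and $\Lambda(t_m)$ exactly and $F$ is linear, I obtain
\begin{align*}
X(t_m)-Y_m = \underbrace{\sum_{j=0}^{m-1}\int_{t_j}^{t_{j+1}}\cS_0(t_m-s)\,F(X(s)-X(t_j))\,\rd s}_{=:\,\mathcal Q_m} + \underbrace{\sum_{j=0}^{m-1}\int_{t_j}^{t_{j+1}}\cS_0(t_m-s)\,F(X(t_j)-Y_j)\,\rd s}_{=:\,\mathcal R_m}.
\end{align*}
Thus $\mathcal Q_m$ is a quadrature-type consistency error and $\mathcal R_m$ propagates the accumulated error through the singular kernel.

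To bound $\mathcal Q_m$, I would exploit the singular-type temporal regularity of Proposition \ref{th.sinRegu}, which replaces the plain Hölder exponent $\min\{\alpha,\theta\}$ of Proposition \ref{prop.Regu} by the larger exponent $\theta:=\min\{\frac{\alpha r}{2\beta}+(\gamma-\frac12)^+,\,1-\varepsilon\}$ at the mild cost of a weight $t_j^{\varepsilon-1/2}$ for $j\ge 1$. Combined with $\|\cS_0(t_m-s)\|_{\cL(H)}\le C(t_m-s)^{\alpha-1}$ from Lemma \ref{lem.Opera}(1) and Assumption \ref{ass.Lip1}, a Beta-function style estimate of $\int_{t_j}^{t_{j+1}}(t_m-s)^{\alpha-1}(s-t_j)^{\theta}\,\rd s$ yields an extra factor $h^{\alpha}$ on the terminal subinterval ($j=m-1$); the earlier subintervals contribute $h^{1+\theta}(t_m-t_{j+1})^{\alpha-1}$, which sums through a Riemann-sum comparison to the convolution integral $\int_0^{t_m} s^{\varepsilon-1/2}(t_m-s)^{\alpha-1}\rd s$. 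The boundary case $j=0$, where the singular-type estimate degenerates, is treated separately via Proposition \ref{prop.Regu}(2); the factor $(t_m-s)^{\alpha-1}$ in the kernel absorbs the resulting $t_m^{\alpha-1}h^{1+\min\{\alpha,\theta\}}$ contribution into the final rate. The alternative exponent $\alpha+\gamma$ in the statement traces back to the finer stochastic-convolution regularity of Proposition \ref{prop.Lambda}, which becomes binding when the data/spectral smoothness makes $\frac{\alpha r}{2\beta}$ large relative to $\gamma$.

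For $\mathcal R_m$, Assumption \ref{ass.Lip1} and the singular operator bound give
\begin{align*}
\|\mathcal R_m\|_{L^2(\Omega,H)}\le CL\sum_{j=0}^{m-1}\|e_j\|_{L^2(\Omega,H)}\int_{t_j}^{t_{j+1}}(t_m-s)^{\alpha-1}\,\rd s,\qquad e_j:=X(t_j)-Y_j.
\end{align*}
Combining with the bound on $\mathcal Q_m$ and iterating through a singular-type discrete Gronwall inequality, whose Mittag--Leffler resolvent is controlled by $t_m^{2\alpha-1}$, closes the estimate. The prefactor $t_m^{2\alpha-1}$ in the statement is exactly the Gronwall contribution, compounding the $(t_m-s)^{\alpha-1}$ weight already present in $\mathcal Q_m$ with the $(t_m-t_j)^{\alpha-1}$ kernel from the iteration.

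The main obstacle is the tension between the kernel singularity of $\cS_0$ and the degeneracy of the singular-type regularity at $t=0$: the initial subinterval $j=0$ can only be handled by the weaker Hölder estimate of Proposition \ref{prop.Regu}(2), and one must verify that its amplification by $(t_m-s)^{\alpha-1}$ stays compatible with the global rate $Ct_m^{2\alpha-1}h^{\min\{\alpha+\theta,\,\alpha+\gamma,\,1\}-\varepsilon}$. Balancing the three competing exponents inside the $\min$, under the structural restriction $\alpha+\gamma>\frac12$ from Assumption \ref{ass.Noise}, is the most delicate bookkeeping step.
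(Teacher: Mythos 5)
Your decomposition into a consistency error $\mathcal Q_m$ and an error-propagation term $\mathcal R_m$, with the Gr\"onwall closure via $\mathcal R_m$, matches the paper's general architecture. However, the way you propose to bound $\mathcal Q_m$ does not reach the claimed super-convergence rate, and this is the crux of the theorem.

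You estimate $\|X(s)-X(t_j)\|_{L^2(\Omega,H)}$ directly via Proposition \ref{th.sinRegu}, which gives the exponent $\theta := \min\{\frac{\alpha r}{2\beta}+(\gamma-\tfrac12)^+,\,1-\varepsilon\}$. Now carry out your own quadrature carefully: for $j<m-1$ you bound $(t_m-s)^{\alpha-1}\le (t_m-t_{j+1})^{\alpha-1}$ on $[t_j,t_{j+1}]$ and integrate $(s-t_j)^{\theta}$ to get $h^{1+\theta}$, and the Riemann sum collapses to $h^{\theta}\int_0^{t_m} s^{\varepsilon-1/2}(t_m-s)^{\alpha-1}\,\rd s \sim t_m^{\alpha+\varepsilon-1/2} h^{\theta}$. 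The extra $h^{\alpha}$ you mention is picked up only on the single terminal cell $j=m-1$, where $(t_m-s)^{\alpha-1}$ is genuinely small. So your bound for $\mathcal Q_m$ is $O(h^{\theta})$, not $O(h^{\alpha+\theta})$, and the theorem's rate $h^{\min\{\alpha+\theta-\varepsilon,\,\alpha+\gamma-\varepsilon,\,1\}}$ is strictly larger by a full order $\alpha$.

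To close this gap the paper does not treat $X(s)-X(t_j)$ as a single object; it splits it into the three summands of the mild-solution representation \eqref{eq.eqivaMild}. The initial-data increment $(\cS_{1-\alpha}(s)-\cS_{1-\alpha}(t_j))X_0$ already gives order $h$ after quadrature. The drift increment $\Upsilon_{F\circ X}(s)-\Upsilon_{F\circ X}(t_j)$ is estimated not through Proposition \ref{th.sinRegu} but through Proposition \ref{prop.UpsilonG} \emph{applied directly to the increment of $\Upsilon_{F\circ X}$}, which is one full order $\alpha$ smoother than $X$ because the fractional convolution $\cS_0\ast(\cdot)$ lifts regularity by $\alpha$; this is precisely where the exponent $\alpha+\theta$ is produced, and it is lost if you only invoke the H\"older-type bound on $X$. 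For the stochastic increment $\Lambda(s)-\Lambda(t_j)$, a pathwise bound of order $\theta$ would again be insufficient; the paper instead exploits the linearity of $F$ and the independence of Wiener increments across subintervals to push the square \emph{inside} the sum, i.e.\ $|J_4^\star|^2 \le C\sum_j \|\cdot\|^2$, and measures the inner stochastic integral in the weakest admissible norm $\dot H^{r-\kappa}$ (paying the price $(t_m-s)^{-\alpha(\kappa-r)/(2\beta)}$ through $A^{(\kappa-r)/2}\cS_0$). This $L^2$-orthogonality argument is what produces the competing exponent $\alpha+\gamma$, together with the off-diagonal bookkeeping of Lemma \ref{lem.J4star2}. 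Your attribution of the $\alpha+\gamma$ term to "the finer stochastic-convolution regularity of Proposition \ref{prop.Lambda}" is therefore also off: at $\rho=0$ that proposition only gives the exponent $\theta$, and the genuine source of $\alpha+\gamma$ is the independence structure, not a sharper regularity estimate.

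In short, the decomposition and the Gr\"onwall step are fine, but the proposal as written yields only the non-super-convergent rate $h^{\theta}$; the essential missing ideas are (i) splitting the local truncation error along \eqref{eq.eqivaMild} so that Proposition \ref{prop.UpsilonG} can be applied to the drift convolution increment, and (ii) the independence-of-increments argument for the stochastic part.
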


Further, we weaken the restriction of $F$ and obtain the following theorem.

\begin{Theo} \label{th.MLEulerErrorNonlinear}
Let $X_0 \in L^4(\Omega,\dot{H}^{2\beta})$ with $\beta \in (\frac{1}{2},1]$. Suppose that Assumptions \ref{ass.Noise}--\ref{ass.Lip1} hold, the mapping $F$ in \eqref{eq.model} is twice differentiable and there exist $\delta \in [1,2\beta)$ and $\zeta \in [1, 2\beta)$ such that
\begin{align}
& \| F'(\varphi_1)\varphi_2 \|_{-\delta}
\leq L ( 1 + \| \varphi_1 \|_{r} ) \| \varphi_2 \|_{-r},
\!\!\!\!\!\!\!\!\!\!\!\!\!\!\!\!\!\!\!\!\!\!\!\!\!\!\!\!\!\!\!\!\!\!\!\!
&& \forall\, \varphi_1 \in \dot{H}^{r}, \, \varphi_2 \in \dot{H}^{-r}, \label{ass.1stF}\\
& \| F''(u) \<\phi, \psi\> \|_{-\zeta}
\leq L \| \phi \| \, \| \psi \|,
\!\!\!\!\!\!\!\!\!\!\!\!\!\!\!\!\!\!\!\!\!\!\!\!\!\!\!\!\!\!\!\!\!\!\!\!
&& \forall\, \phi, \, \psi \in H. \label{ass.2ndF}
\end{align}
Then there exists some positive constant $C = C(\alpha,\beta,\gamma,r,\delta,\zeta,L,T,\varepsilon)$ such that for all $2 \leq m \leq M$,
\begin{align*}
\| X(t_m) - Y_m \|_{L^2(\Omega,H)}
\leq
\begin{cases}
C t_m^{\alpha-1-\frac{\alpha\zeta}{2\beta}} h^{ \frac{\alpha r}{\beta} + \min\{ \frac{1}{2} - \frac{\alpha\kappa}{2\beta}, 2(\gamma-\frac{1}{2})^{+} \} }, & \mbox{if } \alpha + \gamma = 1, \\
C t_m^{\alpha-1-\frac{\alpha\zeta}{2\beta}} h^{\min\{\frac{\alpha}{2\beta}\min\{\kappa,2r\} + (\gamma-\frac{1}{2})^{+}, 1-\varepsilon\}}, & \mbox{if } \alpha + \gamma \neq 1.
\end{cases}
\end{align*}
\end{Theo}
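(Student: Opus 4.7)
The plan is to represent the error using a second-order Taylor expansion of $F$ around $X(t_j)$ and then exploit the negative-norm conditions \eqref{ass.1stF}--\eqref{ass.2ndF} to trade spatial regularity of the nonlinear terms for a (controllably) more singular time kernel. Since $Y_0 = X_0$ and the MLE scheme reuses the exact stochastic convolution $\Lambda(t_m)$, the error collapses to
\begin{equation*}
e_m := X(t_m) - Y_m = \sum_{j=0}^{m-1} \int_{t_j}^{t_{j+1}} \cS_0(t_m - s)\bigl[F(X(s)) - F(Y_j)\bigr]\rd s,
\end{equation*}
and the Taylor expansion reads
\begin{equation*}
F(X(s)) - F(Y_j) = F'(X(t_j))\bigl(X(s) - X(t_j)\bigr) + R_{j,s} + \widetilde F_j\, e_j,
\end{equation*}
where $\widetilde F_j := \int_0^1 F'(Y_j + \theta e_j)\rd\theta$ and $R_{j,s}$ is the quadratic remainder involving $F''$ evaluated on a convex combination of $X(s)$ and $X(t_j)$.

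The key calculation is then
\begin{equation*}
\|\cS_0(t_m - s) F'(X(t_j))(X(s) - X(t_j))\| \le C(t_m - s)^{\alpha - 1 - \frac{\alpha\delta}{2\beta}}(1 + \|X(t_j)\|_r)\|X(s) - X(t_j)\|_{-r}
\end{equation*}
by \eqref{ass.1stF} together with Lemma \ref{lem.Opera}(1), and analogously $\|\cS_0(t_m - s) R_{j,s}\| \le C(t_m - s)^{\alpha - 1 - \alpha\zeta/(2\beta)}\|X(s) - X(t_j)\|^2$ by \eqref{ass.2ndF}. The condition $\delta,\zeta < 2\beta$ is precisely what preserves the local integrability at $s = t_m$, while $\delta,\zeta \ge 1$ is what buys the super-convergence over the naive Hölder rate. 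The linear-in-error piece $\cS_0(t_m - s)\widetilde F_j e_j$ is bounded crudely by $C(t_m - s)^{\alpha - 1}\|e_j\|$ via Assumption \ref{ass.Lip1} and will be absorbed by a discrete Gronwall argument at the end.

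The regularity inputs I would feed into these bounds are Proposition \ref{prop.Regu}(1) for the moment $\|X(t_j)\|_{r}$, Proposition \ref{prop.Lambda} with $\rho = -r$ (so that $(\kappa - r + \rho)^+ = (\kappa - 2r)^+$, which is exactly where the $\min\{\kappa,2r\}$ in the statement originates) for $\|\Lambda(s) - \Lambda(t_j)\|_{L^p(\Omega,\dot H^{-r})}$, and Proposition \ref{prop.UpsilonG} together with the singular-type Proposition \ref{th.sinRegu} to handle the deterministic part $X - \Lambda$ in both $\dot H^{-r}$ and $L^4(\Omega, H)$. The need for $\|X(s) - X(t_j)\|^2$ in $L^2(\Omega,\hR)$, i.e., in $L^4(\Omega, H)$, is precisely why the hypothesis on the initial data is strengthened from $L^2$ to $L^4(\Omega, \dot H^{2\beta})$.

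The main obstacle is the exponent bookkeeping: after substituting the bounds, one has to evaluate
\begin{equation*}
\sum_{j=1}^{m-1} \int_{t_j}^{t_{j+1}} (t_m - s)^{\alpha - 1 - \frac{\alpha\zeta}{2\beta}} (s-t_j)^{2\theta}\rd s
\end{equation*}
and its linear-in-increment analogue, and extract both the singular prefactor $t_m^{\alpha - 1 - \alpha\zeta/(2\beta)}$ (produced by the most singular, i.e.\ quadratic, term through a Beta-function split at $s = t_m/2$) and the correct $h$-exponent in each of the two regimes $\alpha + \gamma = 1$ versus $\alpha + \gamma \ne 1$; this dichotomy is directly inherited from the case split in Proposition \ref{prop.Lambda}. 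The recursion is then closed by a singular-type discrete Gronwall inequality applied to the linear-in-$e_j$ contribution, and the degenerate index $j=0$ is handled separately using $e_0 = 0$ and the plain Hölder estimate \eqref{Holder-1} of Proposition \ref{prop.Regu}(2).
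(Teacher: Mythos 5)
Your high-level architecture matches the paper's: the same Taylor expansion around $X(t_j)$, the same use of \eqref{ass.1stF} and \eqref{ass.2ndF} to pay for extra kernel singularity $A^{\delta/2}$, $A^{\zeta/2}$ at the cost of measuring increments in $\dot H^{-r}$ and $L^4(\Omega,H)$, the same role for Propositions \ref{prop.UpsilonG}, \ref{prop.Regu}, \ref{th.sinRegu}, and the same singular discrete Gronwall closure with a separate treatment of $j=0$. The $L^4$ hypothesis and the $\min\{\kappa,2r\}$ origin are identified correctly.

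There is, however, one genuine gap. You propose to bound the entire first-order piece by a single application of \eqref{ass.1stF}, which forces you to measure the full stochastic increment $\Lambda(s)-\Lambda(t_j)$ in $\dot H^{-r}$ and sum in $\ell^1$. The paper does not do this: in the estimate of $J_4$ it splits the increment into the local part $\int_{t_j}^{s}\cS_\gamma(s-u)\,\rd W(u)$ and the global part $\int_0^{t_j}(\cS_\gamma(s-u)-\cS_\gamma(t_j-u))\,\rd W(u)$, calling them $J_4^{\star}$ and $J_4^{\star\star}$. Only the global part $J_4^{\star\star}$ uses the $\dot H^{-r}$ trick via \eqref{ass.1stF}. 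For the local part $J_4^{\star}$, the paper uses the crude Lipschitz bound $\|F'(\cdot)\|\le L$ together with the kernel $(t_m-s)^{\alpha-1}$ and the $\dot H^0$ norm, and then exploits the independence of the noise increments across subintervals (i.e., that the $Z_j:=\int_{t_j}^{t_{j+1}}\cS_0(t_m-s)F'(X(t_j))\int_{t_j}^s\cS_\gamma(s-u)\rd W(u)\,\rd s$ form a martingale-difference sequence) to sum in $\ell^2$. This $\ell^2$ summation buys an extra factor $h^{\min\{\alpha-\varepsilon,1/2\}}$ that is not available in an $\ell^1$ sum. Your unified $\dot H^{-r}$ route cannot reproduce this gain, and it is needed: when $\alpha+\gamma=1$ and $r>\kappa/2$ (so $(\kappa-2r)^+=0$), the local increment in $L^p(\Omega,\dot H^{-r})$ is $O((s-t_j)^{1/2})$ by \eqref{eq.Lambda1sub}, giving at best $h^{1/2}$ for $J_4$; but the claimed rate is $h^{\frac{\alpha r}{\beta}+\min\{\frac12-\frac{\alpha\kappa}{2\beta},2(\gamma-1/2)^+\}}$, which for $r$ near $\kappa$ is strictly larger than $1/2$. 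So your bookkeeping would fall short precisely in this regime. A secondary, related imprecision: the $\alpha+\gamma=1$ refinement does not come from the statement of Proposition \ref{prop.Lambda} (its conclusion has no case split); it comes from the sharper intermediate bound in Lemma \ref{le.DiffOpera}(1), applied only to the global increment, so citing Proposition \ref{prop.Lambda} alone will not produce the exponent $\frac12+\frac{\alpha}{2\beta}(2r-\kappa)$ that the $\alpha+\gamma=1$ case requires. To close the proof you must adopt the local/global split of $J_4$ and treat the local part with the $\ell^2$ orthogonality argument rather than with the $\dot H^{-r}$ embedding.
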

The conditions \eqref{ass.1stF} and \eqref{ass.2ndF} are common in the error analysis of the exponential-type integrators for nonlinear (fractional) SPDEs. In particular, the mapping $F$ can be taken to be a Nemytskii operator defined by $F(U)(x) = f(U(x))$, where $U:\ \hD \rightarrow \hR$, and $f:\ \hR \rightarrow \hR$ is a smooth function with bounded derivatives of orders $1$ and $2$; see e.g., \cite{WangQi2015, Kovacs2020SIAM} for the concrete examples. 

Before proving Theorems \ref{th.MLEulerErrorLinear} and \ref{th.MLEulerErrorNonlinear}, we give several remarks on the strong convergence orders therein to identify the motivations and contributions of our work.% and to point out some further issues.

\begin{Rem}
%\indent 
%\item[(1)] For the linear stochastic fractional diffusion equation, the Gr\"unwald--Letnikov method and the L1 method have been respectively studied in \cite{Jin2019ESAIM, WuYan2020ANM}, whose strong convergence orders are proved to be less than or equal to $\min\{\alpha+\gamma-\frac12-\varepsilon,1\}$. Compared with these two time-stepping methods, the Mittag--Leffler Euler integrator uses more information of the mild solution \eqref{eq.mildSol} and thus has higher accuracy in the sense of strong convergence, as shown in Theorem \ref{th.MLEulerErrorLinear}. 
Under the conditions \eqref{ass.1stF} and \eqref{ass.2ndF}, it is shown in \cite{Kovacs2020SIAM} that when applying the MLE integrator to the following stochastic Volterra integro-differential equation 
$$\partial_t u(t)+\frac{1}{\Gamma(\alpha)}\int_0^t(t-s)^{\alpha-1}Au(s)\rd s= F(u(t))+\dot{W}(t),\qquad t\in[0,T]$$
with suitable initial value $u(0)$, the corresponding strong convergence order is exactly twice of the temporal H\"older continuity exponent of the mild solution.
%Theorems \ref{th.MLEulerErrorLinear} and \ref{th.MLEulerErrorNonlinear} actually show that the MLE integrator is super-convergent in the sense that its strong convergence order is higher than the temporal H\"older continuity exponent of the mild solution \eqref{eq.mildSol}. 
The same phenomenon also occurs in Theorem \ref{th.MLEulerErrorNonlinear} when $\alpha\in(\frac{1}{2},1)$ and $\gamma=1-\alpha$, since $\frac{\alpha r}{\beta} + \min\{ \frac{1}{2} - \frac{\alpha\kappa}{2\beta}, 2(\gamma-\frac{1}{2})^{+} \} = \frac{\alpha r}{\beta}$ and $\min\{\alpha, \frac{\alpha r}{2\beta} + (\gamma-\frac{1}{2})^{+} \} = \frac{\alpha r}{2\beta}$ at this time. On the other hand, when $\alpha$ is sufficiently small such that $ \alpha<\frac{\alpha r}{2\beta} + (\gamma-\frac{1}{2})^{+}$, the strong convergence order in Theorem \ref{th.MLEulerErrorNonlinear} may be higher than twice the temporal H\"older continuity exponent $\alpha$ of the mild solution \eqref{eq.mildSol}. However, this does not hold when the mild solution has relatively high temporal H\"older regularity since the strong convergence order of the MLE integrator will never be greater than $1$. 

%Especially, for the case of $\alpha+\gamma=1$ and $\alpha\in(\frac{1}{2},1)$, we also have that the strong convergence order $\frac{1}{2} + \frac{\alpha}{2\beta}(2r-\kappa)$ is almost twice of the temporal H\"older continuity exponent of the exact solution $\min\{ \alpha, \textcolor{red}{\frac{\alpha r}{2\beta} + (\gamma-\frac{1}{2})^{+}} \}=\frac{\alpha r}{2\beta}$.

From Theorems \ref{th.MLEulerErrorLinear} and \ref{th.MLEulerErrorNonlinear}, it can be observed that there is an order gap between the linear case and the nonlinear case. This phenomenon also appears when applying the numerical method \eqref{eq.NumSol} to SPDEs driven by the space-time white noise, which corresponds to the case $(\alpha,\beta,\gamma,r) = (1,1,0,\frac{1}{2}-2\varepsilon)$. More precisely, the existing results show that the corresponding strong convergence orders are respectively $1-\varepsilon$ and $\frac{1}{2}-\varepsilon$ for the linear and nonlinear cases; see e.g., \cite{Jentzen2009ProcA, WangQi2015}. It seems that the strong convergence order in Theorem \ref{th.MLEulerErrorNonlinear} for the nonlinear case may be not sharp. We leave this problem as an open problem, and will pursue this line of research in future work.
\end{Rem}

In order to prove Theorems \ref{th.MLEulerErrorLinear} and \ref{th.MLEulerErrorNonlinear}, we first give the strong error decomposition.
In view of \eqref{eq.mildSol} and \eqref{eq.NumSol}, the $L^2(\Omega,H)$-norm of the error of the numerical solution reads
\begin{align*}
\| X(t_m) - Y_m \|_{L^2(\Omega,H)}
&\leq \Big\| \sum_{j=0}^{m-1} \int_{t_j}^{t_{j+1}} \cS_{0}(t_m-s) ( F(X(s)) - F(Y_j) ) \rd s \Big\|_{L^2(\Omega,H)} \nonumber\\
&\leq \Big\| \sum_{j=0}^{m-1} \int_{t_j}^{t_{j+1}} \cS_{0}(t_m-s) ( F(X(t_j)) - F(Y_j) ) \rd s \Big\|_{L^2(\Omega,H)} \nonumber\\
&\quad + \Big\| \sum_{j=0}^{m-1} \int_{t_j}^{t_{j+1}} \cS_{0}(t_m-s) ( F(X(s)) - F(X(t_j)) ) \rd s \Big\|_{L^2(\Omega,H)}.
\end{align*}
For the integrand in the second term of the right hand side, it follows from Taylor's expansion that
\begin{align*}
F(X(s)) - F(X(t_j)) = F'(X(t_j)) (X(s) - X(t_j)) + R_{F,j}(s),
\end{align*}
where $R_{F,j}(s) := \int_{0}^{1} F''(X(t_j) + \theta (X(s) - X(t_j))) \<X(s) - X(t_j), X(s) - X(t_j)\> (1-\theta) \rd \theta$. Then, applying \eqref{eq.eqivaMild} shows
\begin{align} \label{eq.err5term}
\| X(t_m) - Y_m \|_{L^2(\Omega,H)} \leq J_{1} + J_{2} + J_{3} + J_{4} + J_{5}
\end{align}
with
\begin{align*}
J_{1} &:= \Big\| \sum_{j=0}^{m-1} \int_{t_j}^{t_{j+1}} \cS_{0}(t_m-s) ( F(X(t_j)) - F(Y_j) ) \rd s \Big\|_{L^2(\Omega,H)} , \\
J_{2} &:= \Big\| \sum_{j=0}^{m-1} \int_{t_j}^{t_{j+1}} \cS_{0}(t_m-s) F'(X(t_j)) ( \cS_{1-\alpha}(s) - \cS_{1-\alpha}(t_j) ) X_0 \rd s \Big\|_{L^2(\Omega,H)} , \\
J_{3} &:= \Big\| \sum_{j=0}^{m-1} \int_{t_j}^{t_{j+1}} \cS_{0}(t_m-s) F'(X(t_j)) \big( \Upsilon_{F\circ X}(s) - \Upsilon_{F\circ X}(t_j) \big) \rd s \Big\|_{L^2(\Omega,H)}, \\
J_{4} &:= \Big\| \sum_{j=0}^{m-1} \int_{t_j}^{t_{j+1}} \cS_{0}(t_m-s) F'(X(t_j)) \big( \Lambda(s) - \Lambda(t_j) \big) \rd s \Big\|_{L^2(\Omega,H)}, \\
J_{5} &:= \Big\| \sum_{j=0}^{m-1} \int_{t_j}^{t_{j+1}} \cS_{0}(t_m-s) R_{F,j}(s) \rd s \Big\|_{L^2(\Omega,H)}.
\end{align*}
In fact, the strong error decomposition \eqref{eq.err5term} is more or less standard. However, due to the singularity and non-Markovian property of the solution operator as well as the low regularity of the mild solution, it is challenging to delicately deal with the error terms $J_3$, $J_4$ and $J_5$ for the purpose of proving Theorems \ref{th.MLEulerErrorLinear} and \ref{th.MLEulerErrorNonlinear}. Next, we estimate separately the error $\| X(t_m) - Y_m \|_{L^2(\Omega,H)}$ in the linear and nonlinear cases.

%\subsubsection{Proof of Theorem \ref{th.MLEulerErrorLinear}}
%\emph{Proof of Theorem \ref{th.MLEulerErrorLinear}}.

\emph{Proof of Theorem \ref{th.MLEulerErrorLinear}.} For any $\rho \in [0,1]$, $s \in [t_j, t_{j+1}]$ with $j = 1,\cdots,m-1$, we have 
\begin{align} \label{esti.tj->s}
t_j^{-\rho} \leq 2^{\rho} t_{j+1}^{-\rho} \leq 2^{\rho} s^{-\rho},
\end{align}
which together with \eqref{esti.alpha-1} and the Beta function implies
\begin{align} \label{eq.usetoJ2}
\sum_{j=0}^{m-1} \int_{t_j}^{t_{j+1}} (t_m-s)^{\alpha-1} \big( s^{\alpha} - t_j^{\alpha} \big) \rd s
\leq t_{m-1}^{2\alpha-1} h + C \int_{t_1}^{t_m} (t_m-s)^{\alpha-1} s^{\alpha-1} (s-t_j) \rd s 
\leq C t_{m}^{2\alpha-1} h.
\end{align}

According to \eqref{eq.err5term}, we need to bound $\{J_i\}_{i=1}^{5}$. Firstly, Lemma \ref{lem.Opera}(1) shows
\begin{align} \label{eq.J1}
J_{1}
&\leq C \sum_{j=0}^{m-1} \int_{t_j}^{t_{j+1}} \| \cS_{0}(t_m-s) \|_{\cL(H)} \| X(t_j) - Y_j \|_{L^2(\Omega,H)} \rd s \nonumber\\
&\leq C \sum_{j=0}^{m-1} \int_{t_j}^{t_{j+1}} (t_m-s)^{\alpha-1} \| X(t_j) - Y_j \|_{L^2(\Omega,H)} \rd s.
\end{align}
Secondly, Lemma \ref{lem.Opera}, $\| X(0) \|_{L^2(\Omega,\dot{H}^{2\beta})} < \infty$ and \eqref{eq.usetoJ2} indicate
\begin{align} \label{eq.J2}
J_{2}
&\leq C \sum_{j=0}^{m-1} \int_{t_j}^{t_{j+1}} \| \cS_{0}(t_m-s) \|_{\cL(H)} \| A^{-\beta} ( \cS_{1-\alpha}(s) - \cS_{1-\alpha}(t_j) ) \|_{\cL(H)} \| X_0 \|_{L^2(\Omega,\dot{H}^{2\beta})} \rd s \nonumber\\
&\leq C \sum_{j=0}^{m-1} \int_{t_j}^{t_{j+1}} (t_m-s)^{\alpha-1} \big( s^{\alpha} - t_j^{\alpha} \big) \rd s \nonumber\\
&\leq C t_{m}^{2\alpha-1} h.
\end{align}

As for $J_3$, it holds that
\begin{align*}
J_{3}
&= \Big\| \int_{0}^{t_{1}} \cS_{0}(t_m-s) F'(X(t_0)) \Upsilon_{F\circ X}(s) \rd s \Big\|_{L^2(\Omega,H)} \\
&\quad + \Big\| \sum_{j=1}^{m-1} \int_{t_j}^{t_{j+1}} \cS_{0}(t_m-s) F'(X(t_j)) \big( \Upsilon_{F\circ X}(s) - \Upsilon_{F\circ X}(t_j) \big) \rd s \Big\|_{L^2(\Omega,H)} \\
&=: J_3^{\star} + J_3^{\star\star}.
\end{align*}
Here, by Lemma \ref{lem.Opera}(1) and Theorem \ref{prop.Regu},
\begin{align*}
J_3^{\star}
&\leq C \int_{0}^{t_{1}} \| \cS_{0}(t_m-s) \|_{\cL(H)} \int_{0}^{s} \| \cS_{0}(s-u) \|_{\cL(H)} \| X(u) \|_{L^2(\Omega,H)} \rd u \rd s \\
&\leq C \int_{0}^{t_{1}} (t_m-s)^{\alpha-1} \int_{0}^{s} (s-u)^{\alpha-1} \rd u \rd s \\
&\leq C t_m^{2\alpha-1} h.
\end{align*}
With Proposition \ref{th.sinRegu} at hand, applying Proposition \ref{prop.UpsilonG} with $\mu = \frac{1}{2}-\varepsilon$, $\nu = \min\{ \frac{\alpha r}{2\beta} + (\gamma-\frac{1}{2})^{+}, 1-\varepsilon \}$ and $G = F\circ X$ shows that for any $s \in [t_j,t_{j+1})$ with $j = 1,\cdots,m-1$, 
\begin{align*} %\label{Holder.I2-end}
\| \Upsilon_{F\circ X}(s) - \Upsilon_{F\circ X}(t_j) \|_{L^2(\Omega,H)}
&\leq C ( s^{\alpha} - t_j^{\alpha} ) + C t_j^{\varepsilon-\frac{1}{2}} h^{\min\{ \alpha + \frac{\alpha r}{2\beta} + (\gamma-\frac{1}{2})^{+} - \varepsilon, 1 \}},
\end{align*}
which together with \eqref{eq.usetoJ2}, \eqref{esti.tj->s} and the Beta function implies 
\begin{align*}
J_3^{\star\star}
&\leq C \sum_{j=1}^{m-1} \int_{t_j}^{t_{j+1}} \| \cS_{0}(t_m-s) \|_{\cL(H)} \| \Upsilon_{F\circ X}(s) - \Upsilon_{F\circ X}(t_j) \|_{L^2(\Omega,H)} \rd s \\
&\leq C \sum_{j=1}^{m-1} \int_{t_j}^{t_{j+1}} (t_m-s)^{\alpha-1} ( s^{\alpha} - t_j^{\alpha} ) \rd s 
+ C h^{\min\{ \alpha + \frac{\alpha r}{2\beta} + (\gamma-\frac{1}{2})^{+} - \varepsilon, 1 \}} \sum_{j=1}^{m-1} \int_{t_j}^{t_{j+1}} (t_m-s)^{\alpha-1} t_j^{\varepsilon-\frac{1}{2}} \rd s \\
&\leq C t_{m}^{2\alpha-1} h 
+ C h^{\min\{ \alpha + \frac{\alpha r}{2\beta} + (\gamma-\frac{1}{2})^{+} - \varepsilon, 1 \}} \int_{t_1}^{t_{m}} (t_m-s)^{\alpha-1} s^{\varepsilon-\frac{1}{2}} \rd s \\
&\leq C t_m^{2\alpha-1} h^{ \min\{ \alpha + \frac{\alpha r}{2\beta} + (\gamma-\frac{1}{2})^{+} - \varepsilon, 1 \} }.
\end{align*}
Thus, we obtain
\begin{align} \label{eq.J3}
J_{3} \leq C t_m^{2\alpha-1} h^{ \min\{ \alpha + \frac{\alpha r}{2\beta} + (\gamma-\frac{1}{2})^{+} - \varepsilon, 1 \} }.
\end{align}

As for $J_4$, it holds that
\begin{align} \label{eq.J4split2}
J_{4} &\leq \Big\| \sum_{j=0}^{m-1} \int_{t_j}^{t_{j+1}} \cS_{0}(t_m-s) F'(X(t_j)) \int_{t_j}^{s} \cS_{\gamma}(s-u) \rd W(u) \rd s \Big\|_{L^2(\Omega,H)} \nonumber\\
&\quad + \Big\| \sum_{j=0}^{m-1} \int_{t_j}^{t_{j+1}} \cS_{0}(t_m-s) F'(X(t_j)) \int_0^{t_j} \cS_{\gamma}(s-u) - \cS_{\gamma}(t_j-u) \rd W(u) \rd s \Big\|_{L^2(\Omega,H)} \nonumber\\
&=: J_4^{\star} + J_4^{\star\star}.
\end{align}
Here, it follows from the linearity of $F$, the independence of the noise increments, Lemma \ref{lem.Opera}(1), \eqref{eq.Lambda1sub} and Cauchy--Schwarz's inequality that
\begin{align*}
|J_4^{\star}|^2
&\leq C \sum_{j=0}^{m-1} \Big\| \int_{t_j}^{t_{j+1}} \cS_{0}(t_m-s) \int_{t_j}^{s} \cS_{\gamma}(s-u) \rd W(u) \rd s \Big\|_{L^2(\Omega,H)}^2 \nonumber\\
&\leq C \sum_{j=0}^{m-1} \Big( \int_{t_j}^{t_{j+1}} \| A^{\frac{\kappa- r}{2}} \cS_{0}(t_m-s) \|_{\cL(H)} \Big\| \int_{t_j}^{s} \cS_{\gamma}(s-u) \rd W(u) \Big\|_{L^2(\Omega,\dot{H}^{ r-\kappa})} \rd s \Big)^2 \nonumber\\
&\leq C \sum_{j=0}^{m-1} \Big( \int_{t_j}^{t_{j+1}} (t_m-s)^{ \alpha - 1 -\frac{\alpha}{2\beta}(\kappa- r) } (s-t_j)^{ \alpha + \gamma - \frac{1}{2} } \rd s \Big)^2 \nonumber\\
&\leq C h^{2(\alpha+\gamma-\frac{1}{2})} \sum_{j=0}^{m-1} \int_{t_j}^{t_{j+1}} (t_m-s)^{2\varepsilon-1} \rd s \int_{t_j}^{t_{j+1}} (t_m-s)^{ 2\alpha - 1 -\frac{\alpha}{\beta}(\kappa- r) - 2\varepsilon } \rd s \nonumber\\
&\leq C h^{ 2\min\{\alpha + \frac{\alpha r}{2\beta} + (\gamma-\frac{1}{2})^{+} - \varepsilon, \alpha+\gamma\} }.
\end{align*}
Similarly, by the linearity of $F$, $J_4^{\star\star}$ can be further split into

\begin{align} \label{eq.J42star2}
|J_4^{\star\star}|^2 &\leq C \sum_{j=0}^{m-1} \Big\| \int_{t_j}^{t_{j+1}} \cS_{0}(t_m-s) \int_0^{t_j} \cS_{\gamma}(s-u) - \cS_{\gamma}(t_j-u) \rd W(u) \rd s \Big\|_{L^2(\Omega,H)}^2 \nonumber\\
&\quad + C \sum_{0 \leq i < j \leq m-1} \hE \Big\< \int_{t_i}^{t_{i+1}} \cS_{0}(t_m-s) \int_0^{t_i} \cS_{\gamma}(s-u) - \cS_{\gamma}(t_i-u) \rd W(u) \rd s, \nonumber\\
&\qquad\qquad\qquad\quad\ \ \! \int_{t_j}^{t_{j+1}} \cS_{0}(t_m-\tau) \int_0^{t_j} \cS_{\gamma}(\tau-v) - \cS_{\gamma}(t_j-v) \rd W(v) \rd \tau \Big\> \nonumber\\
&=: C J_{4,1}^{\star\star} + C J_{4,2}^{\star\star}.
\end{align}
Then, using Lemma \ref{lem.Opera}, \eqref{eq.Lambda2sub} and Cauchy--Schwarz's inequality shows
\begin{align*}
J_{4,1}^{\star\star}
&\leq \sum_{j=0}^{m-1} \Big( \int_{t_j}^{t_{j+1}} \| A^{\frac{\kappa- r}{2}} \cS_{0}(t_m-s) \|_{\cL(H)} \Big\| \int_0^{t_j} \cS_{\gamma}(s-u) - \cS_{\gamma}(t_j-u) \rd W(u) \Big\|_{L^2(\Omega,\dot{H}^{ r-\kappa})} \rd s \Big)^2 \\
&\leq C \sum_{j=0}^{m-1} \Big( \int_{t_j}^{t_{j+1}} (t_m-s)^{ \alpha - 1 -\frac{\alpha}{2\beta}(\kappa- r) } (s-t_j)^{ \min\{ \alpha + \gamma - \frac{1}{2}, 1-\varepsilon \} } \rd s \Big)^2 \\
&\leq C h^{ 2\min\{ \alpha + \gamma - \frac{1}{2}, 1-\varepsilon \} } \sum_{j=0}^{m-1} \int_{t_j}^{t_{j+1}} (t_m-s)^{2\varepsilon-1} \rd s \int_{t_j}^{t_{j+1}} (t_m-s)^{ 2\alpha - 1 -\frac{\alpha}{\beta}(\kappa- r) - 2\varepsilon } \rd s \\
&\leq C h^{ 2\min\{\alpha + \frac{\alpha r}{2\beta} + (\gamma-\frac{1}{2})^{+} - \varepsilon, \alpha+\gamma, 1 \} }.
\end{align*}
The estimate of $J_{4,2}^{\star\star}$ is deferred to Lemma \ref{lem.J4star2}. With Lemma \ref{lem.J4star2} in mind, collecting the above estimates about $J_4$ implies
\begin{align} \label{eq.J4}
J_4 \leq C h^{ \min\{ \alpha + \frac{\alpha r}{2\beta} + (\gamma-\frac{1}{2})^{+} - \varepsilon, 1, \alpha+\gamma -\varepsilon \} }.
\end{align}
Therefore, combining the estimates \eqref{eq.err5term}, \eqref{eq.J1}--\eqref{eq.J3}, \eqref{eq.J4} and the fact $J_{5} = 0$ as well as applying the singular-type Gr\"onwall's inequality %(see e.g., \cite[Lemma A.2]{Kruse2014Book}) 
completes the proof of Theorem \ref{th.MLEulerErrorLinear}.
\hfill$\Box$

In order to bound $J_{4,2}^{\star\star}$, we prepare the following lemma. 
\begin{Lem} \label{lem.usetoJ35sum}
Let $a \in \hR$, $b < 1$, $c < 1$ and the integer $M \geq 2$. For $j \in \{0,1,\cdots, M\}$, put $t_j = j\frac{T}{M}$. Then there exists some positive constant $C=C(a,b,c,T)$ such that
\begin{align*}
\sup_{2 \leq m \leq M} \sum_{0 \leq i < j \leq m-1} \int_{t_i}^{t_{i+1}} \int_{t_j}^{t_{j+1}} (t_m-s)^{-a} (t_m-\tau)^{-b} (\tau-s)^{-c} \rd \tau \rd s 
\leq
\begin{cases}
C, & \mbox{if } a+b+c < 2, \\
C \ln M, & \mbox{if } a+b+c = 2, \\
C M^{a+b+c-2} , & \mbox{if } a+b+c > 2.
\end{cases}
\end{align*}
\end{Lem}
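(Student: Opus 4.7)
The plan is to replace the discrete double sum by a one-sided continuous integral, then evaluate the inner integral explicitly via a Beta function identity, and finally split into three regimes according to the sign of $2-a-b-c$. The crucial structural observation is that since $j \le m-1$ forces $i \le m-2$ in the summation, every admissible pair $(s,\tau)$ satisfies $s \le t_{m-1} = t_m - h$; this $h$-gap is what ultimately produces the $M^{a+b+c-2}$ factor in the singular regime.

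First, using the nonnegativity of the integrand, I would bound
\begin{equation*}
\sum_{0 \le i < j \le m-1} \int_{t_i}^{t_{i+1}} \int_{t_j}^{t_{j+1}} (t_m-s)^{-a} (t_m-\tau)^{-b} (\tau-s)^{-c} \rd \tau \rd s \le \int_0^{t_{m-1}} (t_m-s)^{-a} \int_s^{t_m} (t_m-\tau)^{-b} (\tau-s)^{-c} \rd \tau \rd s,
\end{equation*}
which is justified because the union of the rectangles $[t_i,t_{i+1}] \times [t_j,t_{j+1}]$ over $0 \le i < j \le m-1$ is contained, up to a set of measure zero, in $\{(s,\tau) : 0 \le s \le t_{m-1},\ s \le \tau \le t_m\}$.

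Next, applying the substitution $\tau = s + u(t_m-s)$ with $u \in [0,1]$, the inner integral evaluates to $B(1-c,1-b)(t_m-s)^{1-b-c}$, where the convergence of the Beta integral uses precisely the assumptions $b<1$ and $c<1$. After this reduction and the further substitution $v = t_m - s$, the estimate becomes $B(1-c,1-b) \int_h^{t_m} v^{1-a-b-c} \rd v$. The three cases of the lemma then follow from an elementary one-variable computation: when $a+b+c < 2$ the integral is bounded by $t_m^{2-a-b-c}/(2-a-b-c) \le C$; when $a+b+c = 2$ it equals $\ln(t_m/h) \le \ln M + C$; and when $a+b+c > 2$ the lower endpoint $v=h$ dominates, giving $h^{-(a+b+c-2)}/(a+b+c-2) \le C M^{a+b+c-2}$.

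There is no substantive obstacle here: the only subtlety is the bookkeeping on the range of $s$, which requires reading off $s \le t_{m-1}$ from the constraint $i < j \le m-1$. Once that structural point is recognized, the proof reduces to a routine application of the Beta function identity followed by an elementary one-dimensional integration, taking the supremum over $m \in \{2, \ldots, M\}$ at the end.
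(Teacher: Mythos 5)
Your proposal is correct and follows essentially the same route as the paper: bound the discrete sum by the continuous integral over $\{0\le s\le t_{m-1},\ s\le\tau\le t_m\}$, evaluate the inner integral via the Beta function identity, and reduce to the one-dimensional integral $\int_{h}^{t_m} v^{1-a-b-c}\,\rd v$, where the three cases are read off directly. The only cosmetic difference is the order of the changes of variables (you scale $\tau$ first and then substitute $v=t_m-s$, whereas the paper substitutes $u=t_m-s$, $v=t_m-\tau$ at once), but both lead to the identical expression.
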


\begin{proof}
Using the change of variables and the Beta function shows
\begin{align*}
&\sum_{0 \leq i < j \leq m-1} \int_{t_i}^{t_{i+1}} \int_{t_j}^{t_{j+1}} (t_m-s)^{-a} (t_m-\tau)^{-b} (\tau-s)^{-c} \rd \tau \rd s \\
&\qquad\quad \leq \int_{0}^{t_{m-1}} \int_{s}^{t_m} (t_m-s)^{-a} (t_m-\tau)^{-b} (\tau-s)^{-c} \rd \tau \rd s \\
&\qquad\quad = \int_{t_1}^{t_m} \int_{0}^{u} u^{-a} v^{-b} (u-v)^{-c} \rd v \rd u \\
&\qquad\quad = B(1-b,1-c) \int_{t_1}^{t_m} u^{1-(a+b+c)} \rd u.
\end{align*}
Then, the proof can be completed by the direct calculations. \qed
\end{proof}

%Note that the proof of the folloing lemma is given in Appendix.
\begin{Lem} \label{lem.J4star2}
Let $J_{4,2}^{\star\star}$ be defined by \eqref{eq.J42star2}. Then there exists some positive constant $C$ such that
\begin{align*}
J_{4,2}^{\star\star} \leq C h^{ 2\min\{ \alpha + \frac{\alpha r}{2\beta} + (\gamma-\frac{1}{2})^{+} - \varepsilon, 1, \alpha+\gamma -\varepsilon \} }.
\end{align*}
\end{Lem}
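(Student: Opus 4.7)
The plan is to exploit the It\^o isometry to convert the cross-expectations (indexed by $i<j$) into deterministic trace integrals over the common past $[0,t_i]$, and then to reduce the sum for $J_{4,2}^{\star\star}$ to a deterministic double sum that is handled by Lemma \ref{lem.usetoJ35sum}.

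Concretely, introducing the short-hand $\xi_i(s):=\int_0^{t_i}(\cS_\gamma(s-u)-\cS_\gamma(t_i-u))\,\rd W(u)$ and the analogous $\eta_j(\tau)$, I would first swap the outer deterministic integrals with the expectation via Fubini. Because $i<j$, the It\^o isometry for Hilbert-valued stochastic integrals converts the inner expectation $\hE\langle \cS_0(t_m-s)\xi_i(s),\cS_0(t_m-\tau)\eta_j(\tau)\rangle$ into
\begin{align*}
\int_0^{t_i}\!\textup{tr}\!\left[\cS_0(t_m-s)\bigl(\cS_\gamma(s-u)-\cS_\gamma(t_i-u)\bigr)\,Q\,\bigl(\cS_\gamma(\tau-u)-\cS_\gamma(t_j-u)\bigr)^{*}\cS_0(t_m-\tau)^{*}\right]\rd u.
\end{align*}
Applying Cauchy--Schwarz for the trace, inserting $A^{(r-\kappa)/2}A^{(\kappa-r)/2}$ in each bracket and using Assumption \ref{ass.Noise} to extract $\|A^{(r-\kappa)/2}Q^{1/2}\|_{\cL_2(H)}$, I would reduce the integrand to a product of two operator norms of the form $\|\cS_0(t_m-s)(\cS_\gamma(s-u)-\cS_\gamma(t_i-u))A^{(\kappa-r)/2}\|_{\cL(H)}$, controlled by Lemma \ref{lem.Opera}(1) and Lemma \ref{le.DiffOpera}.

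Having eliminated the stochastic structure, $J_{4,2}^{\star\star}$ is dominated by a scalar sum of the shape
$$\sum_{0\le i<j\le m-1}\int_{t_i}^{t_{i+1}}\!\!\int_{t_j}^{t_{j+1}}(t_m-s)^{-a}(t_m-\tau)^{-b}(\tau-s)^{-c}\,\rd\tau\,\rd s,$$
into which any remaining H\"older-type factors $(s-t_i)^{\theta}$ and $(\tau-t_j)^{\theta}$ have been distributed between powers of $h$ and of the singular factors $(t_m-s)^{-1}$, $(t_m-\tau)^{-1}$, $(\tau-s)^{-1}$. Applying Lemma \ref{lem.usetoJ35sum} to this expression, in each of the three regimes $a+b+c<2$, $=2$, $>2$, then yields the claimed bound $h^{2\min\{\alpha+\alpha r/(2\beta)+(\gamma-1/2)^{+}-\varepsilon,\,1,\,\alpha+\gamma-\varepsilon\}}$, with the three competing exponents in the minimum arising from the three regimes of the lemma.

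The main obstacle is the delicate exponent bookkeeping in the last step: each unit of H\"older regularity $\theta$ extracted from the operator-norm bounds can be spent either on a power of $h$, on dampening the singularity of $\cS_0$ at $t_m$, or on producing a cross-factor $(\tau-s)^{-c}$, and the split must be chosen so that $a+b+c$ stays on the correct side of $2$ while simultaneously reproducing all three competing rates in the statement. A secondary subtlety, specific to this cross term compared to $J_{4,1}^{\star\star}$, is that $\|A^{(r-\kappa)/2}Q^{1/2}\|_{\cL_2(H)}$ from Assumption \ref{ass.Noise} enters only once (via Cauchy--Schwarz on the trace rather than twice through It\^o isometry on a single stochastic integral), so the required regularity must be borne by the operator-norm factors $(\cS_\gamma(s-u)-\cS_\gamma(t_i-u))A^{(\kappa-r)/2}$ alone; this asymmetry is what produces the $\alpha r/(2\beta)$ term in the final exponent.
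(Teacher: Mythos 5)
Your high-level skeleton matches the paper's: convert the cross term to a deterministic $u$-integral of a trace via the covariance formula, apply Cauchy--Schwarz on the trace, peel off $\|A^{(r-\kappa)/2}Q^{1/2}\|_{\mathcal L_2(H)}$ and operator-norm factors, and finish with Lemma \ref{lem.usetoJ35sum}. However, there is a genuine gap at the center of the argument. After Cauchy--Schwarz, one is left with a $u$-integral
\begin{align*}
K_{i,j}^{\star}(s,\tau)=\int_0^{t_i}\|\mathcal S_\gamma(s-u)-\mathcal S_\gamma(t_i-u)\|_{\mathcal L(H)}\,\|\mathcal S_\gamma(\tau-u)-\mathcal S_\gamma(t_j-u)\|_{\mathcal L(H)}\,\mathrm du,
\end{align*}
and the whole point of the lemma is that this quantity carries both a power of $h$ \emph{and} a singular factor $(\tau-s)^{-c}$; without the latter, Lemma \ref{lem.usetoJ35sum} cannot produce the stated exponent. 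You propose to control the operator-norm factors by Lemma \ref{le.DiffOpera}, but that lemma is an $L^2$-in-$u$ estimate for a single difference $\|A^{\rho/2}(\mathcal S_\gamma(t-u)-\mathcal S_\gamma(s-u))\|^2$. The only way to feed it into the product above is to split by Cauchy--Schwarz in $u$, which decouples the two factors, yields only $K_{i,j}^{\star}\lesssim h^{2\theta}$ with no $(\tau-s)$ dependence, and leads (after summing) to a bound of the order $h^{2\theta}$ rather than $h^{2\min\{\alpha+\alpha r/(2\beta)+(\gamma-1/2)^+-\varepsilon,\,1,\,\alpha+\gamma-\varepsilon\}}$; in the regime $\alpha+\gamma\in(\tfrac12,1)$, for example, this costs at least a factor $h^{2\alpha}$. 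The paper instead uses the pointwise estimates of Lemma \ref{lem.Opera}(2)--(3) (not \ref{le.DiffOpera}), and then works through a three-way case analysis on $\alpha+\gamma$ in which the factor $(\tau-s)^{-c}$ is extracted from the $\mathcal S_\gamma$-difference kernels by inequalities such as $\tau-s\le 2(t_j-u)$ for $u\le t_i$ (for $j>i+1$) and $\tau-s\le 2h$ (for $j=i+1$). That case analysis is the heart of the proof, and your proposal acknowledges it as "the main obstacle" but does not carry it out.

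Two smaller inaccuracies: the Hilbert--Schmidt factor enters \emph{squared}, $\|A^{(r-\kappa)/2}Q^{1/2}\|^2_{\mathcal L_2(H)}$ (one copy for each argument of the trace Cauchy--Schwarz), not once, so the contrast with $J_{4,1}^{\star\star}$ you draw is not the source of the $\alpha r/(2\beta)$ term. Also, the paper places the weight $A^{(\kappa-r)/2}$ on the $\mathcal S_0$ factors (yielding $(t_m-s)^{\alpha-1-\alpha(\kappa-r)/(2\beta)}$) and leaves the $\mathcal S_\gamma$ differences in plain $\mathcal L(H)$ norm, so that the case analysis of $K_{i,j}^{\star}$ depends cleanly on $\alpha+\gamma$; placing it on the $\mathcal S_\gamma$ differences as you suggest would shift the case boundaries and is not what is used.
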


\begin{proof}
To facilitate the proof, for $s \in (t_i,t_{i+1}]$, $\tau \in (t_j,t_{j+1}]$ with $0 \leq i < j \leq m-1$, introduce
\begin{align*}
K_{i,j}(s,\tau)
&:= \hE \Big\< \cS_{0}(t_m-s) \int_0^{t_i} \cS_{\gamma}(s-u) - \cS_{\gamma}(t_i-u) \rd W(u), \\
&\qquad\ \ \! \cS_{0}(t_m-\tau) \int_0^{t_j} \cS_{\gamma}(\tau-v) - \cS_{\gamma}(t_j-v) \rd W(v) \Big\>, \\
K_{i,j}^{\star}(s,\tau)
&:= \int_{0}^{t_i} \| \cS_{\gamma}(s-u) - \cS_{\gamma}(t_i-u) \|_{\cL(H)} \| \cS_{\gamma}(\tau-u) - \cS_{\gamma}(t_j-u) \|_{\cL(H)} \rd u.
\end{align*}
In fact, one can derive from \cite[Corollary 4.29]{DaPrato2014Book} that
\begin{align*}
K_{i,j}(s,\tau)
&= \int_{0}^{t_i} \textup{Tr}\Big( \cS_{0}(t_m-\tau) \big( \cS_{\gamma}(\tau-u) - \cS_{\gamma}(t_j-u) \big) Q \big( \cS_{\gamma}(s-u) - \cS_{\gamma}(t_i-u) \big) \cS_{0}(t_m-s) \Big) \rd u \\
&\leq \int_{0}^{t_i} \| Q^{\frac{1}{2}} \big( \cS_{\gamma}(s-u) - \cS_{\gamma}(t_i-u) \big) \cS_{0}(t_m-s) \|_{\cL_2(H)} \\
&\qquad\quad \times \| Q^{\frac{1}{2}} \big( \cS_{\gamma}(\tau-u) - \cS_{\gamma}(t_j-u) \big) \cS_{0}(t_m-\tau) \|_{\cL_2(H)} \rd u \\
&\leq \| A^{\frac{ r-\kappa}{2}} Q^{\frac{1}{2}} \|_{\cL_2(H)}^2 \| A^{\frac{\kappa- r}{2}} \cS_{0}(t_m-s) \|_{\cL(H)} \| A^{\frac{\kappa- r}{2}} \cS_{0}(t_m-\tau) \|_{\cL(H)} \times K_{i,j}^{\star}(s,\tau),
\end{align*}
which together with Assumption \ref{ass.Noise} and Lemma \ref{lem.Opera}(1) indicates
\begin{align} \label{esti.preJ35star}
J_{4,2}^{\star\star}
& = \sum_{0 \leq i < j \leq m-1} \int_{t_i}^{t_{i+1}} \int_{t_j}^{t_{j+1}} K_{i,j}(s,\tau) \rd \tau \rd s \nonumber\\
&\leq C\sum_{0 \leq i < j \leq m-1} \int_{t_i}^{t_{i+1}} \int_{t_j}^{t_{j+1}} (t_m-s)^{\alpha-1-\frac{\alpha}{2\beta}(\kappa-r)} (t_m-\tau)^{\alpha-1-\frac{\alpha}{2\beta}(\kappa- r)} K_{i,j}^{\star}(s,\tau) \rd \tau \rd s.
\end{align}
Here, it follows from Lemma \ref{lem.Opera}(2)--(3) that for $s \in (t_i,t_{i+1}]$, $\tau \in (t_j,t_{j+1}]$ with $0 \leq i < j \leq m-1$,
\begin{align*}
\ K_{i,j}^{\star}(s,\tau) 
\leq
\begin{cases}
C \int_{0}^{t_i} \big( (t_i-u)^{\alpha+\gamma-1} - (s-u)^{\alpha+\gamma-1} \big) \big( (t_j-u)^{\alpha+\gamma-1} - (\tau-u)^{\alpha+\gamma-1} \big) \rd u, & \mbox{if } \alpha + \gamma \in (\frac{1}{2},1), \\
C \int_{0}^{t_i} \int_{t_i-u}^{s-u} v^{-1} \rd v \int_{t_j-u}^{\tau-u} \omega^{-1} \rd \omega \rd u, & \mbox{if } \alpha + \gamma =1, \\
C \int_{0}^{t_i} \big( (s-u)^{\alpha+\gamma-1} - (t_i-u)^{\alpha+\gamma-1} \big) \big( (\tau-u)^{\alpha+\gamma-1} - (t_j-u)^{\alpha+\gamma-1} \big) \rd u, & \mbox{if } \alpha + \gamma \in (1,2].
\end{cases}
\end{align*}
Next, we put $s \in (t_i,t_{i+1}]$, $\tau \in (t_j,t_{j+1}]$ with $0 \leq i < j \leq m-1$ and estimate $J_{4,2}^{\star\star}$ in three cases.

\underline{Case 1:\ $\alpha + \gamma \in (\frac{1}{2},1)$}. By the change of variables, it holds that
\begin{align*}
&\quad\ \int_{0}^{t_i} (t_i-u)^{\alpha+\gamma-1} \big( (t_j-u)^{\alpha+\gamma-1} - (\tau-u)^{\alpha+\gamma-1} \big) \rd u \\
&= t_i^{\alpha+\gamma} \int_{0}^{1} (1-v)^{\alpha+\gamma-1} \big( (t_j-t_i v)^{\alpha+\gamma-1} - (\tau-t_i v)^{\alpha+\gamma-1} \big) \rd v \\
&\leq s^{\alpha+\gamma} \int_{0}^{1} (1-v)^{\alpha+\gamma-1} \big( (t_j-s v)^{\alpha+\gamma-1} - (\tau-s v)^{\alpha+\gamma-1} \big) \rd v \\
&= \int_{0}^{s} (s-u)^{\alpha+\gamma-1} \big( (t_j-u)^{\alpha+\gamma-1} - (\tau-u)^{\alpha+\gamma-1} \big) \rd u.
\end{align*}
Then,
\begin{align*}
K_{i,j}^{\star}(s,\tau)
&\leq C \bigg( \int_{0}^{t_i} (t_i-u)^{\alpha+\gamma-1} \big( (t_j-u)^{\alpha+\gamma-1} - (\tau-u)^{\alpha+\gamma-1} \big) \rd u \\
&\qquad\ - \int_{0}^{s} (s-u)^{\alpha+\gamma-1} \big( (t_j-u)^{\alpha+\gamma-1} - (\tau-u)^{\alpha+\gamma-1} \big) \rd u \\
&\qquad\ + \int_{t_i}^{s} (s-u)^{\alpha+\gamma-1} \big( (t_j-u)^{\alpha+\gamma-1} - (\tau-u)^{\alpha+\gamma-1} \big) \rd u \bigg) \\
&\leq C \int_{t_i}^{s} (s-u)^{\alpha+\gamma-1} \big( (t_j-u)^{\alpha+\gamma-1} - (\tau-u)^{\alpha+\gamma-1} \big) \rd u.
\end{align*}
When $j > i+1$, using the mean value theorem and $\tau-s \leq 2(t_j - u)$ with $u \in [t_i,s]$ shows
\begin{align*}
K_{i,j}^{\star}(s,\tau)
&\leq C \int_{t_i}^{s} (s-u)^{\alpha+\gamma-1} (\tau-t_j) (t_j - u)^{\alpha+\gamma-2} (t_j - u)^{1-\varepsilon} \rd u \, (\tau-s)^{\varepsilon-1} \\
&\leq C h (\tau-s)^{\varepsilon-1} \int_{t_i}^{s} (s-u)^{2(\alpha+\gamma-1)-\varepsilon} \rd u \\
&\leq C h^{2(\alpha+\gamma)-\varepsilon} (\tau-s)^{\varepsilon-1}.
\end{align*}
When $j = i+1$, one has $\tau-s \leq 2 h$, which indicates
\begin{align*}
K_{i,j}^{\star}(s,\tau)
&\leq C \int_{t_i}^{s} (s-u)^{\alpha+\gamma-1} (t_j-u)^{\alpha+\gamma-1} \rd u \, h^{1-\varepsilon} (\tau-s)^{\varepsilon-1} \\
&\leq C h^{1-\varepsilon} (\tau-s)^{\varepsilon-1} \int_{t_i}^{s} (s-u)^{2(\alpha+\gamma-1)} \rd u \\
&\leq C h^{2(\alpha+\gamma)-\varepsilon} (\tau-s)^{\varepsilon-1}.
\end{align*}
Thus, $K_{i,j}^{\star}(s,\tau) \leq C h^{2(\alpha+\gamma)-\varepsilon} (\tau-s)^{\varepsilon-1}$. Then, by recalling \eqref{esti.preJ35star} and using Lemma \ref{lem.usetoJ35sum},
\begin{align*}
J_{4,2}^{\star\star}
\leq C h^{2\min\{\alpha+\gamma, \alpha + \frac{\alpha r}{2\beta} + (\gamma-\frac{1}{2})^{+}\} - \varepsilon}.
\end{align*}

\underline{Case 2:\ $\alpha + \gamma = 1$}. Using the change of variables gives
\begin{align*}
K_{i,j}^{\star}(s,\tau)
&\leq C \int_{0}^{t_i} \int_{t_i}^{s} (v-u)^{\varepsilon-1} (v-u)^{-\varepsilon} \rd v \int_{t_j}^{\tau} (\omega-u)^{\varepsilon-1} (\omega-u)^{-\varepsilon} \rd \omega \rd u \\
&\leq C \int_{0}^{t_i} (t_i-u)^{\varepsilon-1} (t_j-u)^{\varepsilon-1} \int_{t_i}^{s} (v-u)^{-\varepsilon} \rd v \int_{t_j}^{\tau} (\omega-u)^{-\varepsilon} \rd \omega \rd u \\
&\leq C h^{2-2\varepsilon} \int_{0}^{t_i} (t_i-u)^{\varepsilon-1} (t_j-u)^{\varepsilon-1} \rd u.
\end{align*}
When $j > i+1$, it follows from $\tau-s \leq 2 (t_j - u)$ with $u\in[0,t_i]$ that
\begin{align*}
K_{i,j}^{\star}(s,\tau)
\leq C h^{2-2\varepsilon} \int_{0}^{t_i} (t_i-u)^{\varepsilon-1} (t_j-u)^{\varepsilon-1} (t_j-u)^{1-\varepsilon} \rd u \, (\tau-s)^{\varepsilon-1} 
\leq C h^{2-2\varepsilon} (\tau-s)^{\varepsilon-1}.
\end{align*}
When $j = i+1$, it follows from $t_j-u \geq h$ with $u \in [0,t_i]$ and $\tau-s \leq 2 h$ that
\begin{align*}
K_{i,j}^{\star}(s,\tau)
\leq C h^{1-\varepsilon} \int_{0}^{t_i} (t_i-u)^{\varepsilon-1} \rd u \, (\tau-s)^{1-\varepsilon} (\tau-s)^{\varepsilon-1} 
\leq C h^{2-2\varepsilon} (\tau-s)^{\varepsilon-1}.
\end{align*}
Thus, $K_{i,j}^{\star}(s,\tau) \leq C h^{2-2\varepsilon} (\tau-s)^{\varepsilon-1}$. Then, by recalling \eqref{esti.preJ35star} and using Lemma \ref{lem.usetoJ35sum},
\begin{align*}
J_{4,2}^{\star\star}
\leq C h^{ 2\min\{1, \alpha + \frac{\alpha r}{2\beta} + (\gamma-\frac{1}{2})^{+} \} - 2\varepsilon }.
\end{align*}

\underline{Case 3:\ $\alpha + \gamma \in (1,2]$}. Applying the mean value theorem shows
\begin{align*}
K_{i,j}^{\star}(s,\tau)
&\leq C \int_{0}^{t_i} (t_i-u)^{\alpha+\gamma-2} (s-t_i) (t_j-u)^{\alpha+\gamma-2} (\tau-t_j) \rd u \\
&\leq C h^2 \int_{0}^{t_i} (t_i-u)^{\alpha+\gamma-2} (t_j-u)^{\alpha+\gamma-2} \rd u.
\end{align*}
When $\alpha + \gamma \in (\frac{3}{2},2]$, one gets
\begin{align*}
K_{i,j}^{\star}(s,\tau)
\leq C h^2 \int_{0}^{t_i} (t_i-u)^{2(\alpha+\gamma-2)} \rd u
\leq C h^2.
\end{align*}
When $\alpha + \gamma \in (1,\frac{3}{2}]$ and $j > i+1$, it follows from $\tau-s \leq 2 (t_j - u)$ with $u \in [0,t_i]$ that
\begin{align*}
K_{i,j}^{\star}(s,\tau)
&\leq C h^2 \int_{0}^{t_i} (t_i-u)^{\alpha+\gamma-2} (t_j-u)^{\alpha+\gamma-2} (t_j-u)^{-2(\alpha+\gamma-\frac{3}{2})+\varepsilon} \rd u \, (\tau-s)^{2(\alpha+\gamma-\frac{3}{2})-\varepsilon} \\
&\leq C h^2 (\tau-s)^{2(\alpha+\gamma-\frac{3}{2})-\varepsilon} \int_{0}^{t_i} (t_i-u)^{\varepsilon-1} \rd u \\
&\leq C h^2 (\tau-s)^{2(\alpha+\gamma-\frac{3}{2})-\varepsilon}.
\end{align*}
When $\alpha + \gamma \in (1,\frac{3}{2}]$ and $j = i+1$, it follows from $t_j-u \geq h$ with $u \in [0,t_i]$ and $\tau-s \leq 2 h$ that
\begin{align*}
K_{i,j}^{\star}(s,\tau)
&\leq C h^2 \int_{0}^{t_i} (t_i-u)^{\alpha+\gamma-2} (t_j-u)^{2(\alpha+\gamma)-3-\varepsilon} (t_j-u)^{1-(\alpha+\gamma)+\varepsilon} \rd u \\
&\leq C h^{2(\alpha+\gamma)-1-\varepsilon} \int_{0}^{t_i} (t_i-u)^{\varepsilon-1} \rd u \, (\tau-s)^{-2(\alpha+\gamma-\frac{3}{2})+\varepsilon} (\tau-s)^{2(\alpha+\gamma-\frac{3}{2})-\varepsilon} \\
&\leq C h^2 (\tau-s)^{2(\alpha+\gamma-\frac{3}{2})-\varepsilon}.
\end{align*}
Thus, $K_{i,j}^{\star}(s,\tau) \leq C h^2 (\tau-s)^{ \min\{0, 2(\alpha+\gamma-\frac{3}{2})-\varepsilon\} }$. Then, by recalling \eqref{esti.preJ35star} and using Lemma \ref{lem.usetoJ35sum},
\begin{align*}
J_{4,2}^{\star\star}
\leq C h^{ 2\min\{1, \alpha + \frac{\alpha r}{2\beta} + (\gamma-\frac{1}{2})^{+} - \varepsilon\} }.
\end{align*}
Hence, the proof is completed. \qed 
\end{proof}

%\subsubsection{Proof of Theorem \ref{th.MLEulerErrorNonlinear}}
\emph{Proof of Theorem \ref{th.MLEulerErrorNonlinear}}. Under Assumption \ref{ass.Lip1}, the estimates \eqref{eq.J1}--\eqref{eq.J3} also hold. Thus, it remains to re-estimate $J_4$ and $J_5$. As shown in \eqref{eq.J4split2}, $J_4$ has been broken down into $J_4^{\star}$ and $J_4^{\star\star}$. Using the independence of the noise increments, Assumption \ref{ass.Lip1}, Lemma \ref{lem.Opera}(1) and \eqref{eq.Lambda1sub}, Cauchy--Schwarz's inequality and the identity $\alpha + \gamma - \frac{1}{2} - \frac{\alpha\kappa}{2\beta} = (\gamma-\frac{1}{2})^{+} + \frac{\alpha\varepsilon_0}{2\beta}$ reveals
\begin{align} \label{eq.J4StarNon}
|J_4^{\star}|^2
&= \sum_{j=0}^{m-1} \Big\| \int_{t_j}^{t_{j+1}} \cS_{0}(t_m-s) F'(X(t_j)) \int_{t_j}^{s} \cS_{\gamma}(s-u) \rd W(u) \rd s \Big\|_{L^2(\Omega,H)}^2 \nonumber\\
&\leq C \sum_{j=0}^{m-1} \Big( \int_{t_j}^{t_{j+1}} \| \cS_{0}(t_m-s) \|_{\cL(H)} \Big\| \int_{t_j}^{s} \cS_{\gamma}(s-u) \rd W(u) \Big\|_{L^2(\Omega,H)} \rd s \Big)^2 \nonumber\\
&\leq C \sum_{j=0}^{m-1} \Big( \int_{t_j}^{t_{j+1}} (t_m-s)^{\alpha-1} (s-t_j)^{\alpha + \gamma - \frac{1}{2} - \frac{\alpha}{2\beta} (\kappa- r) } \rd s \Big)^2 \nonumber\\
&\leq C h^{2( \frac{\alpha}{2\beta}(r+\varepsilon_0) + (\gamma-\frac{1}{2})^{+} )} \sum_{j=0}^{m-1} \int_{t_j}^{t_{j+1}} (t_m-s)^{2\varepsilon-1} \rd s \int_{t_j}^{t_{j+1}} (t_m-s)^{2\alpha-1-2\varepsilon} \rd s \nonumber\\
&\leq C h^{ 2( \min\{ \alpha-\varepsilon, \frac{1}{2}\} + \frac{\alpha}{2\beta}(r+\varepsilon_0) + (\gamma-\frac{1}{2})^{+} ) }. 
\end{align}
According to Lemma \ref{lem.Opera}(1), \eqref{ass.1stF}, H\"older's inequality, Theorem \ref{prop.Regu} and \eqref{eq.Lambda2sub}, one has
\begin{align*}
J_4^{\star\star}
&\leq \sum_{j=0}^{m-1} \int_{t_j}^{t_{j+1}} \| A^{\frac{\delta}{2}} \cS_{0}(t_m-s) \|_{\cL(H)} \Big\| A^{-\frac{\delta}{2}} F'(X(t_j)) \int_0^{t_j} \cS_{\gamma}(s-u) - \cS_{\gamma}(t_j-u) \rd W(u) \Big\|_{L^2(\Omega,H)} \rd s \\
&\leq C \sum_{j=0}^{m-1} \int_{t_j}^{t_{j+1}} (t_m-s)^{\alpha-1-\frac{\alpha\delta}{2\beta}} \Big( 1 + \| X(t_j) \|_{L^4(\Omega,\dot{H}^{r})} \Big)  \Big\| \int_0^{t_j} \cS_{\gamma}(s-u) - \cS_{\gamma}(t_j-u) \rd W(u) \Big\|_{L^4(\Omega,\dot{H}^{-r})} \rd s \\
&\leq
\begin{cases}
C h^{\frac{1}{2} + \frac{\alpha}{2\beta}(2r-\kappa)}, & \mbox{if } \alpha + \gamma = 1, \\
C h^{\min\{\frac{\alpha}{2\beta}\min\{\kappa,2r\} + (\gamma-\frac{1}{2})^{+} + \frac{\alpha\varepsilon_0}{2\beta}, 1-\varepsilon\}}, & \mbox{if } \alpha + \gamma \neq 1.
\end{cases}
\end{align*}
Thus, it follows from \eqref{eq.J4split2} and \eqref{eq.J4StarNon} that
\begin{align} \label{eq.J4Non}
J_4 \leq
\begin{cases}
C h^{\frac{1}{2} + \frac{\alpha}{2\beta}(2r-\kappa)}, & \mbox{if } \alpha + \gamma = 1, \\
C h^{\min\{\frac{\alpha}{2\beta}\min\{\kappa,2r\} + (\gamma-\frac{1}{2})^{+} + \frac{\alpha\varepsilon_0}{2\beta}, 1-\varepsilon\}}, & \mbox{if } \alpha + \gamma \neq 1,
\end{cases}
\end{align}
where we used the following inequality
\begin{align*}
\min\{ \alpha-\varepsilon, \frac{1}{2}\} + \frac{\alpha r}{2\beta} + (\gamma-\frac{1}{2})^{+} + \frac{\alpha\varepsilon_0}{2\beta}
\geq
\begin{cases}
\frac{1}{2} + \frac{\alpha}{2\beta}(2r-\kappa), & \mbox{if } \alpha + \gamma = 1, \\
\frac{\alpha}{2\beta}\min\{\kappa,2r\} + (\gamma-\frac{1}{2})^{+} + \frac{\alpha\varepsilon_0}{2\beta}, & \mbox{if } \alpha + \gamma \neq 1.
\end{cases}
\end{align*}
Here, the proof of the case $\alpha + \gamma = 1$ is not difficult, and it is used in the case $\alpha + \gamma \neq 1$ that
\begin{align*}
\frac{\alpha}{2\beta}\min\{\kappa-r, r\}
\leq \frac{\alpha}{2\beta} \frac{\kappa}{2}
\leq \frac{\alpha}{2} \leq \min\{\alpha-\varepsilon,\frac{1}{2}\}.
\end{align*}

For $J_5$, making use of the condition \eqref{ass.2ndF}, Lemma \ref{lem.Opera}(1) and Theorem \ref{prop.Regu}, we have
 \begin{align*}
 J_{5}
&\leq \sum_{j=0}^{m-1} \int_{t_j}^{t_{j+1}} \| A^{\frac{\zeta}{2}} \cS_{0}(t_m-s) \|_{\cL(H)} \| A^{-\frac{\zeta}{2}} R_{F,j}(s) \|_{L^2(\Omega,H)} \rd s \nonumber\\
&\leq C \sum_{j=0}^{m-1} \int_{t_j}^{t_{j+1}} (t_m-s)^{\alpha-1-\frac{\alpha\zeta}{2\beta}} \| X(s) - X(t_j) \|_{L^4(\Omega,H)}^2 \rd s.
\end{align*}
Further applying Theorem \ref{prop.Regu}, Proposition \ref{th.sinRegu}, \eqref{esti.tj->s} and the Beta function yields
\begin{align} \label{eq.J5Non}
J_{5}
%&\leq C \sum_{j=0}^{m-1} \int_{t_j}^{t_{j+1}} (t_m-s)^{\alpha-1-\frac{\alpha\zeta}{2\beta}} \| X(s) - X(t_j) \|_{L^4(\Omega,H)}^2 \rd s \nonumber\\
&\leq C \int_{0}^{t_{1}} (t_m-s)^{\alpha-1-\frac{\alpha\zeta}{2\beta}} (s-t_j)^{2\min\{ \alpha, \frac{\alpha r}{2\beta} + (\gamma-\frac{1}{2})^{+} \}} \rd s \nonumber\\
&\quad + C \sum_{j=1}^{m-1} \int_{t_j}^{t_{j+1}} (t_m-s)^{\alpha-1-\frac{\alpha\zeta}{2\beta}} t_j^{2\varepsilon-1} (s-t_j)^{ 2\min\{ \frac{\alpha r}{2\beta} + (\gamma-\frac{1}{2})^{+}, 1-\varepsilon\} } \rd s \nonumber\\
&\leq C t_m^{\alpha-1-\frac{\alpha\zeta}{2\beta}} h + C h^{ 2\min\{ \frac{\alpha r}{2\beta} + (\gamma-\frac{1}{2})^{+}, 1-\varepsilon\} } \int_{t_1}^{t_m} (t_m-s)^{\alpha-1-\frac{\alpha\zeta}{2\beta}} s^{2\varepsilon-1} \rd s \nonumber\\
&\leq C t_m^{\alpha-1-\frac{\alpha\zeta}{2\beta}} h^{ 2\min\{ \frac{\alpha r}{2\beta} + (\gamma-\frac{1}{2})^{+}, \frac{1}{2} \} } \nonumber\\
&\leq
\begin{cases}
C t_m^{\alpha-1-\frac{\alpha\zeta}{2\beta}} h^{ \frac{\alpha r}{\beta} + \min\{ 2(\gamma-\frac{1}{2})^{+}, \frac{1}{2} - \frac{\alpha\kappa}{2\beta} \} }, & \mbox{if } \alpha + \gamma = 1, \\
C t_m^{\alpha-1-\frac{\alpha\zeta}{2\beta}} h^{\min\{\frac{\alpha}{2\beta}\min\{\kappa,2r\} + (\gamma-\frac{1}{2})^{+}, 1-\varepsilon\}}, & \mbox{if } \alpha + \gamma \neq 1.
\end{cases}
\end{align}
Here, for the case $\alpha + \gamma \neq 1$, the last step also used the inequality
\begin{align*}
\frac{\alpha}{2\beta}\min\{\kappa-r, r\}
\leq \frac{\alpha r}{2\beta} \leq \frac{\alpha r}{2\beta} + (\gamma-\frac{1}{2})^{+}.
\end{align*}
Finally, collecting the estimates \eqref{eq.err5term}, \eqref{eq.J1}--\eqref{eq.J3}, \eqref{eq.J4Non} and \eqref{eq.J5Non} as well as applying the singular-type Gr\"onwall's inequality completes the proof of Theorem \ref{th.MLEulerErrorNonlinear}.
\hfill$\Box$

\subsection{Freidlin--Wentzell type LDP}
\label{S32}

Applying the MLE integrator to \eqref{eq.model-small}, then the associated numerical solution $Y_m^\epsilon\approx X^\epsilon(t_m)$ can be formulated as 
\begin{align*} %\label{eq.NumSol}
Y_m^\epsilon = \cS_{1-\alpha}(t_m) X_0 + \sum_{j=0}^{m-1} \int_{t_j}^{t_{j+1}} \cS_{0}(t_m-s) F(Y_j^\epsilon) \rd s + \sqrt{\epsilon}\int_0^{t_m} \cS_{\gamma}(t_m-s) \rd W(s), 
\end{align*}
where $m = 1,2,\cdots,M$ and $Y_0^\epsilon = X_0$. Further, one can also define the continuified numerical solution $Y^\epsilon=\{Y^\epsilon(t),t\in[0,T]\}$ by 
\begin{align*}% \label{eq:NumSol-con}
Y^\epsilon(t) = \cS_{1-\alpha}(t) X_0 + \int_{0}^{t} \cS_{0}(t-s) F(Y^\epsilon(\lfloor s\rfloor_h)) \rd s + \sqrt{\epsilon}\int_0^{t} \cS_{\gamma}(t-s) \rd W(s),
\end{align*}
where $\lfloor s\rfloor_h:=h\left[\frac{s}{h}\right]$ with $[\,\cdot\,]$ being the floor function. 
%For the sake of simplicity, we use the abbreviations $Y^m:=Y^1_m$ and $Y(t):=Y^1(t)$ to denote respectively the numerical solution and the continuified numerical solution for the model \eqref{eq.model}. 
The following proposition states the Freidlin--Wentzell type LDP of the continuified numerical solution $\{Y^{\epsilon}\}_{\epsilon>0}$.

\begin{Prop}\label{th:LDPnum}
Let $X_0 \in L^p(\Omega,\dot{H}^{2\beta})$ for some $p \geq 2$ and Assumptions \ref{ass.Noise}--\ref{ass.Lip1} hold.
Then the continuified numerical solution $\{Y^{\epsilon}\}_{\epsilon>0}$ satisfies an LDP on $\mathcal C([0,T],H)$ as $\epsilon\to 0$ with the good rate function $I^M$ given by
\begin{align*}
I^M(x)=\inf_{\left\{v\in L^2([0,T],H_0),\,x=Z_M^v\right\}}\frac12 \int_0^T|v(s)|_0^2\rd s,\quad x\in\mathcal C([0,T],H),
\end{align*}
where $Z_M^v=\{Z_M^v(t),t\in[0,T]\}$ is the solution of the following skeleton equation 
\begin{align*}
Z_M^v(t) = \cS_{1-\alpha}(t) X_0 + \int_0^t \cS_{0}(t-s) F(Z_M^v(\lfloor s\rfloor_h)) \rd s + \int_0^t \cS_{\gamma}(t-s) v(s)\rd s, \qquad t \in [0,T].
\end{align*}
\end{Prop}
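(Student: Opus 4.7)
The plan is to mirror the proof of Theorem \ref{th:LDPexact} and invoke the Budhiraja--Dupuis criterion (Proposition \ref{prop:criterion}), applied to the measurable map $\mathcal G_M^\epsilon:\mathcal C([0,T],H_1)\to \mathcal C([0,T],H)$ sending $W$ to $Y^\epsilon$. By Girsanov's theorem, for $v^\epsilon\in\mathcal A_N$, the shifted quantity $Y^{\epsilon,v^\epsilon}:=\mathcal G_M^\epsilon(W+\epsilon^{-1/2}\int_0^\cdot v^\epsilon\rd s)$ satisfies the controlled MLE scheme
\begin{align*}
Y^{\epsilon,v^\epsilon}(t) = \cS_{1-\alpha}(t) X_0 + \int_0^t \cS_0(t-s) F(Y^{\epsilon,v^\epsilon}(\lfloor s\rfloor_h))\rd s + \int_0^t \cS_\gamma(t-s) v^\epsilon(s) \rd s + \sqrt\epsilon\,\Lambda(t).
\end{align*}
It then suffices to establish the numerical analog of Lemma \ref{eq:weakcm}: whenever $\tau_\epsilon\to 0$ and $\{v^\epsilon\}\subset\mathcal A_N$ converges in distribution (as $S_N$-valued random elements) to $v$, one has $Y^{\tau_\epsilon,v^\epsilon}\to Z_M^v$ in distribution in $\mathcal C([0,T],H)$. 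Choosing $\tau_\epsilon=\epsilon$ gives \textbf{(C1)}, and choosing $\tau_\epsilon\equiv 0$ together with the weak compactness of $S_N$ gives \textbf{(C2)}.

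\textbf{A priori regularity uniform in $\epsilon$.} First I would derive $\epsilon$-uniform spatial and temporal estimates on $Y^{\tau_\epsilon,v^\epsilon}$. Applying Lemma \ref{lem.Opera}(1), Assumption \ref{ass.Lip1}, the control bound \eqref{eq.intvup}, Proposition \ref{prop.Lambda}, and the singular-type Gr\"onwall inequality as in \eqref{eq:Xepsilonbound} yields
\begin{align*}
\sup_{t\in[0,T]}\|Y^{\tau_\epsilon,v^\epsilon}(t)\|_{L^p(\Omega,\dot H^r)}\le C
\end{align*}
with $C$ independent of $\epsilon$. Because the nonlinear term uses the piecewise-constant evaluation $F(Y^{\tau_\epsilon,v^\epsilon}(\lfloor\cdot\rfloor_h))$, whose $L^p(\Omega,H)$-norm is bounded uniformly on $[0,T]$ by Assumption \ref{ass.Lip1} and the previous estimate, I can then imitate the proof of Proposition \ref{prop.Regu}(2) combined with \eqref{eq:vepsilonts} to obtain the H\"older-type estimate
\begin{align*}
\|Y^{\tau_\epsilon,v^\epsilon}(t)-Y^{\tau_\epsilon,v^\epsilon}(s)\|_{L^p(\Omega,H)}\le C(t-s)^{\min\{\alpha,\,\frac{\alpha r}{2\beta}+(\gamma-\frac12)^+\}}.
\end{align*}
The Garsia--Rodemich--Ramsay lemma combined with the compact embedding $\dot H^r\hookrightarrow H$ and the set $\mathbb K_a^{r,\theta}$ from \eqref{eq:Ka} then produces tightness of $\{Y^{\tau_\epsilon,v^\epsilon}\}$ in $\mathcal C([0,T],H)$.

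\textbf{Identification of the limit.} Tightness of $\{(Y^{\tau_\epsilon,v^\epsilon},v^\epsilon)\}$ in $\mathcal C([0,T],H)\otimes S_N$, Prokhorov's theorem, and the Skorohod representation theorem produce a probability space on which, along a subsequence, $(Y^{\tau_{\epsilon_n},v^{\epsilon_n}},v^{\epsilon_n})\to(\bar Y,\bar v)$ in distribution. I would identify $\bar Y$ by considering
\begin{align*}
\Psi_t^M(\phi,f):=\Big\|\phi(t)-\cS_{1-\alpha}(t)\phi(0)-\int_0^t\cS_0(t-s)F(\phi(\lfloor s\rfloor_h))\rd s-\int_0^t\cS_\gamma(t-s)f(s)\rd s\Big\|\wedge 1,
\end{align*}
which is continuous and bounded on $\mathcal C([0,T],H)\times S_N$ by the Lipschitz continuity of $F$, the compactness of $\Pi$ from Lemma \ref{lem:tight-I}, and the continuity of $\phi\mapsto\phi(\lfloor\cdot\rfloor_h)$ on $\mathcal C([0,T],H)$. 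Using $\mathbb E[\Psi_t^M(Y^{\tau_{\epsilon_n},v^{\epsilon_n}},v^{\epsilon_n})]\le(C\sqrt{\tau_{\epsilon_n}})\wedge 1\to 0$ and the Portmanteau theorem, I deduce that $\bar Y$ solves the skeleton scheme with control $\bar v$, hence $\bar Y=Z_M^{\bar v}\overset{d}{=}Z_M^v$ by uniqueness of $Z_M^{\cdot}$ (which follows from Gr\"onwall applied discretely across the grid).

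\textbf{Main obstacle.} The delicate point is the non-semigroup solution operator and the piecewise-constant argument $\lfloor s\rfloor_h$: the map $\phi\mapsto F(\phi(\lfloor\cdot\rfloor_h))$ is not continuous in $s$, but because the partition is \emph{fixed} (independent of $\epsilon$), this map is continuous from $\mathcal C([0,T],H)$ into $L^\infty([0,T],H)$, which is what is actually needed for the continuity of $\Psi_t^M$. Aside from checking this point and the uniform-in-$\epsilon$ singular-type estimates used to absorb the factor $(t-s)^{\alpha-1-\frac{\alpha r}{2\beta}}$ in the Gr\"onwall step, the argument parallels that of Theorem \ref{th:LDPexact} and requires no essentially new ingredient.
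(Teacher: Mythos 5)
Your proposal is correct and is exactly the argument the paper intends: the paper itself omits the proof of Proposition \ref{th:LDPnum} with the remark that it ``is similar to that of Theorem \ref{th:LDPexact},'' and your development mirrors that proof step by step (Girsanov to get the controlled MLE scheme, $\epsilon$-uniform spatial and temporal bounds via singular Gr\"onwall, tightness via $\mathbb K_a^{r,\theta}$ and Garsia--Rodemich--Ramsay, identification of the limit through a continuous bounded test functional, and invocation of Proposition \ref{prop:criterion}). Your observation that the only genuinely new point is the continuity of $\phi\mapsto F(\phi(\lfloor\cdot\rfloor_h))$ on $\mathcal C([0,T],H)$ --- which holds because the partition is fixed --- is precisely the detail one would want to flag when carrying over Lemma \ref{eq:weakcm}.
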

The proof of Proposition \ref{th:LDPnum} is similar to that of Theorem \ref{th:LDPexact}, so it is omitted. The readers are referred to \cite{CHJS21,CHJS22} for more results on the LDPs for numerical methods of stochastic differential equations. The following theorem reveals that the large deviation rate function $I^M$ of $\{Y^{\epsilon}\}_{\epsilon>0}$~$\Gamma$-converges to the large deviation rate function $I$ of $\{X^{\epsilon}\}_{\epsilon>0}$, as the partition parameter $M$ tends to infinity. We refer to \cite{DG93} for a detailed introduction of the $\Gamma$-convergence.
\begin{Theo}\label{th:LDPratenum}
Let $X_0 \in L^p(\Omega,\dot{H}^{2\beta})$ for some $p \geq 2$ and Assumptions \ref{ass.Noise}--\ref{ass.Lip1} hold. Then
the sequence $\{I^M\}_{M\in\hN_+}$ $\Gamma$-converges to $I$ on $\mathcal C([0,T],H)$ as $M\to \infty$, i.e., the following properties hold:\
\begin{itemize}
\item[(1)] For any $x\in \mathcal C([0,T],H)$ and any sequence $\{x_M\}_{M\in\hN_+}$ converging to $x$ in $\mathcal C([0,T],H)$,
\begin{align*}%\label{eq:Gammalower}
\liminf_{M\to \infty}I^M(x_M)\ge I(x).
\end{align*}
\item[(2)] For any $x\in \mathcal C([0,T],H)$, there is a recovery sequence $\{x_M\}_{M\in\hN_+}$ converging to $x$ in $\mathcal C([0,T],H)$ such that 
\begin{align*}%\label{eq:Gammaupper}
\limsup_{M\to \infty}I^M(x_M)\le I(x).
\end{align*}
\end{itemize}
\end{Theo}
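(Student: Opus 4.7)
The plan is to reduce both parts of the $\Gamma$-convergence to a single continuous-dependence lemma, namely: \emph{if $\{v_M\}_{M\in\hN_+}\subset S_N$ (for some fixed $N$) converges weakly in $L^2([0,T],H_0)$ to $v$, then $Z_M^{v_M}\to Z^v$ in $\mathcal C([0,T],H)$.} Once this continuity is available, the two $\Gamma$-convergence inequalities follow by standard variational reasoning.

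To establish the continuity lemma, I would decompose
\begin{align*}
Z_M^{v_M}(t)-Z^v(t)=\int_0^t \cS_0(t-s)\bigl[F(Z_M^{v_M}(\lfloor s\rfloor_h))-F(Z^v(s))\bigr]\rd s+\Pi(v_M-v)(t),
\end{align*}
where $\Pi$ is the operator from Lemma \ref{lem:tight-I}. The compactness of $\Pi$ together with the weak convergence $v_M\rightharpoonup v$ forces $\Pi(v_M-v)\to 0$ in $\mathcal C([0,T],H)$. For the drift term, I would insert $F(Z^v(\lfloor s\rfloor_h))$ and split into a Lipschitz piece controlled by $L\|Z_M^{v_M}(\lfloor s\rfloor_h)-Z^v(\lfloor s\rfloor_h)\|$ and a consistency piece controlled by $L\|Z^v(\lfloor s\rfloor_h)-Z^v(s)\|$; the latter tends to $0$ uniformly in $s$ thanks to the temporal H\"older regularity of $Z^v$, which is the deterministic counterpart of Proposition \ref{prop.Regu} applied to the skeleton equation \eqref{eq.skeleton-eq}. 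Combined with $\|\cS_0(t-s)\|_{\cL(H)}\lesssim(t-s)^{\alpha-1}$, a singular-type Gr\"onwall inequality then closes the estimate and yields uniform convergence on $[0,T]$.

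For the $\liminf$ inequality (1), I would take an arbitrary $x_M\to x$ with $\liminf_M I^M(x_M)<\infty$, extract a subsequence realising this $\liminf$, and select near-optimal controls $v_M$ with $Z_M^{v_M}=x_M$ and $\tfrac12\int_0^T|v_M|_0^2\rd s\le I^M(x_M)+1/M$. Boundedness of $\{v_M\}$ in $L^2([0,T],H_0)$ gives a weakly convergent subsequence with limit $v$; the continuity lemma forces $x_M\to Z^v$, hence $x=Z^v$, and weak lower semicontinuity of the $L^2$-norm yields $I(x)\le\tfrac12\int_0^T|v|_0^2\rd s\le\liminf_M I^M(x_M)$. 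For the recovery sequence (2), when $I(x)<\infty$ the continuity lemma (applied along a minimizing sequence of controls) shows that the infimum defining $I(x)$ is attained by some $v^*$, and then $x_M:=Z_M^{v^*}$ satisfies $I^M(x_M)\le\tfrac12\int_0^T|v^*|_0^2\rd s=I(x)$ and $x_M\to x$ (by the continuity lemma with $v_M\equiv v^*$); when $I(x)=\infty$ we simply take $x_M\equiv x$.

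The main obstacle I anticipate lies in the continuity lemma itself, specifically in handling the frozen-argument drift $F(Z_M^{v_M}(\lfloor s\rfloor_h))$ against the singular kernel $(t-s)^{\alpha-1}$. This will require a uniform-in-$M$ regularity estimate for $Z_M^{v_M}$ (both in $\dot H^r$ and H\"older in time), analogous to the a priori bounds derived for $X^{\tau_\epsilon,v^\epsilon}$ in the proof of Lemma \ref{eq:weakcm}, before the singular Gr\"onwall argument can be invoked to conclude.
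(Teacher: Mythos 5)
Your proposal is correct and takes essentially the same route as the paper: a uniform-in-$v$ consistency bound $\sup_{v\in S_N}\|Z_M^v-Z^v\|_{\mathcal C([0,T],H)}\to0$ (from the temporal H\"older regularity of $Z^v$ together with a singular Gr\"onwall argument) combined with weak continuity of $v\mapsto Z^v$ (the $\tau_\epsilon\equiv0$ case of Lemma~\ref{eq:weakcm}), followed by the standard variational reasoning; your ``single continuity lemma'' is exactly the combination of these two pieces. The only notable deviation is in step (2): the paper does not prove that the infimum defining $I(x)$ is attained, but instead selects $M$-dependent near-optimal controls $v_M$ with $Z^{v_M}=x$ and $\tfrac12\int_0^T|v_M|_0^2\rd s\le I(x)+M^{-1}$ and sets $x_M:=Z_M^{v_M}$, whereas you invoke attainment of a minimizer $v^*$ (which is indeed provable via the same weak-continuity lemma and weak lower semicontinuity) to use a fixed recovery control. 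Both variants close using the same consistency estimate, so this is an inessential stylistic difference.
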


\begin{proof}
(1) Without loss of generality, we assume that $K_0:=\liminf_{M\to \infty}I^M(x_M)<\infty$; otherwise the proof is completed.
Let $\{x_M\}_{M\in \hN_+}$ be any sequence converging to $x$ in $\mathcal C([0,T],H)$. Then there exists a subsequence $\{M_k\}_{k\in\hN_+}$ satisfying 
\begin{align*}
\liminf_{M\to \infty}I^M(x_M)=\lim_{k\to\infty}I^{M_k}(x_{M_k})=K_0\in[0,\infty).
\end{align*}
Consequently, there exists some positive constant $K_1$ such that $I^{M_k}(x_{M_k})\le K_1$ for all $k\in\hN_+$. 
According to the definition of $I^M$, there exists a sequence $\{v_{M_k}\}_{k\in\hN_+}\subset L^2([0,T],H_0)$ such that $Z_{M_k}^{v_{M_k}}=x_{M_k}$ and 
\begin{align} \label{eq:vmk} 
\frac{1}{2}\int_0^T|v_{M_k}(s)|_0^2\rd s\le I^{M_k}(x_{M_k})+M_k^{-1},\qquad k\in\hN_+,
\end{align}
which leads to $\{v_{M_k}\}_{k\in\hN_+}\subseteq S_{2(K_1+1)}$. Since $S_{2(K_1+1)}$ endowed with the weak topology of $L^2([0,T],H_0)$ is a compact Polish space, for arbitrarily subsequence $\{g_k\}_{k\in \hN_+}\subseteq\{v_{M_k}\}_{k\in \hN_+}$, there exists a further subsequence of $\{g_k\}_{k\in \hN_+}$, still denoted by $\{g_k\}_{k\in \hN_+}$, such that $g_k\overset{w}\to g$ for some $g\in S_{2(K_1+1)}$ as $k\to \infty$. According to Lemma \ref{eq:weakcm}, we have that 
 $Z^{g_k}=X^{0,g_k}\to Z^g$ in $\mathcal C([0,T],H)$ as $k\to \infty$. 
 
By virtue of \eqref{Holder-Xepsilon} with $\tau_\epsilon\equiv 0$, we have that for all $v\in S_N$,
\begin{align*}
\|Z^v(t)-Z^v(s)\|\leq C (t-s)^{ \min\{ \alpha, \frac{\alpha r}{2\beta} + (\gamma-\frac{1}{2})^{+} \} }, \qquad \forall \, 0\leq s < t \leq T.
\end{align*}
Based on the singular Gronwall's inequality, one has that 
\begin{align}\label{eq:Zv}
\sup_{v\in S_N}\|Z_M^v -Z^v\|_{\mathcal C([0,T],H)}\le CM^{- \min\{ \alpha, \frac{\alpha r}{2\beta} + (\gamma-\frac{1}{2})^{+} \}}.
\end{align}
Thus $x=\lim_{k\to\infty}x_{M_k}=\lim_{k\to \infty}Z_{M_k}^{v_{M_k}}=\lim_{k\to \infty}Z^{v_{M_k}}$ in $\mathcal C([0,T],H)$. Taking into account that $\{g_k\}_{k\in \hN_+}$ is a subsequence of $\{v_{M_k}\}_{k\in \hN_+}$, we deduce $Z^g=\lim_{k\to \infty}Z^{g_k}=\lim_{k\to \infty}Z^{v_{M_k}}=x$ and thus by \eqref{eq:vmk},
\begin{align*}
I(x)\le \frac{1}{2}\int_0^T|g(s)|_0^2\rd s\le \frac{1}{2}\liminf_{k\to\infty}\int_0^T|g_k(s)|_0^2\rd s\le \liminf_{k\to\infty}I^{M_k}(x_{M_k})=\liminf_{M\to \infty}I^M(x_M). 
\end{align*}

(2) Without loss of generality, we assume that $I(x)<\infty;$ otherwise the conclusion holds naturally.
From the definition of $I$, there exists a sequence $\{v_M\}_{M\in\hN_+}\subseteq L^2([0,T],H_0)$ such that $Z^{v_M}=x$ and 
\begin{align*}
I(x)\ge \frac{1}{2}\int_0^T|v^M(s)|_0^2\rd s-M^{-1},\qquad\forall\, M\in\hN_+. 
\end{align*}
This also implies $\{v_M\}_{M\in\hN_+}\subseteq S_{2(I(x)+1)}$. Define $x_M:=Z_M^{v_M}$. Then it follows from \eqref{eq:Zv} that
\begin{align*}
\|x_M-x\|_{\mathcal C([0,T],H)}=\|Z_M^{v_M}-Z^{v_M}\|_{\mathcal C([0,T],H)}\le CM^{- \min\{ \alpha, \frac{\alpha r}{2\beta} + (\gamma-\frac{1}{2})^{+} \}}\to 0, \qquad \mbox{as } M\to\infty. 
\end{align*}
In addition, 
\begin{align*}
I^M(x_M)\le \frac{1}{2}\int_0^T|v^M(s)|_0^2\rd s\le I(x)+M^{-1},\qquad\forall\, M\in\hN_+, 
\end{align*}
which proves $\limsup_{M\to \infty}I^M(x_M)\le I(x)$, as required. \qed 
\end{proof}

\begin{appendices}

\section{Estimates of solution operator}
\label{sec.SolOpera}

We introduce the Mittag--Leffler function and some basic inequalities, which are taken from \cite[Chapter 1.2]{Podlubny1999}. Put $a \in (0,1)$ and $b \in \hR$. The Mittag--Leffler function is defined by 
\begin{align*}
E_{a, b} (z) = \sum_{k=0}^{\infty} \frac{z^k}{\Gamma(a k + b)}, \qquad \mbox{for } z \in \hC.
\end{align*}
For any real number $c \in (\frac{a\pi}{2}, a\pi)$, there exists some constant $C = C(a,b,c) > 0$ such that
\begin{align} \label{upper.ML1}
| E_{a, b} (z) | &\leq C (1 + |z|)^{-1}, \qquad c \leq |\arg(z)| \leq \pi.
\end{align}
When $b = a$, \eqref{upper.ML1} can be refined as
\begin{align} \label{upper.ML2}
| E_{a, a} (z) | &\leq C (1 + |z|)^{-2}, \qquad c \leq |\arg(z)| \leq \pi.
\end{align}
In addition, for any $\eta \in [0,1]$ and $\lambda > 0$, 
\begin{align} \label{eq.MLder}
\frac{\rd}{\rd t} [ t^{a+\eta-1} E_{a,a+\eta}(-\lambda t^{a}) ]
=
\begin{cases}
-\lambda t^{a-1} E_{a,a}(-\lambda t^{a}), & \mbox{if } a+\eta = 1, \\
t^{a+\eta-2} E_{a,a+\eta-1}(-\lambda t^{a}), & \mbox{if } a+\eta \neq 1.
\end{cases}
\end{align}

\begin{Lem} \label{lem.Opera}
Let $0 < s < t \leq T$, $\alpha \in (0,1)$, $\beta \in (0,1]$ and $\eta \in [0,1]$. Then there exists some positive constant $C$ independent of $t$ and $s$ such that $\cS_{\eta}(\cdot)$ defined by \eqref{eq.St} has the following estimates:\
\begin{enumerate}
\item[(1)] For any $\rho \leq 2\beta$,
\begin{align*}
\| A^{\frac{\rho}{2}} \cS_{\eta}(t) \|_{\cL(H)} \leq C t^{ \alpha + \eta - 1 - \frac{\alpha}{2\beta}\rho^{+} }.
\end{align*}

\item[(2)] When $\alpha + \eta = 1$, for any $\rho \in [-2\beta, 2\beta]$,
\begin{align*}
\| A^{\frac{\rho}{2}} ( \cS_{1-\alpha}(t) - \cS_{1-\alpha}(s) ) \|_{\cL(H)} \leq C \int_s^t u^{-1-\frac{\alpha\rho}{2\beta}} \rd u.
\end{align*}

\item[(3)] When $\alpha + \eta \neq 1$, for any $\rho \leq 2\beta$,
\begin{align*}
\| A^{\frac{\rho}{2}} ( \cS_{\eta}(t) - \cS_{\eta}(s) ) \|_{\cL(H)}
\leq C \int_s^t u^{\alpha+\eta-2-\frac{\alpha}{2\beta}\rho^{+}} \rd u.
\end{align*}
\end{enumerate}
\end{Lem}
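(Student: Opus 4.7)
\textbf{Proof plan for Lemma \ref{lem.Opera}.} The key is to reduce all three operator norms to suprema over the eigenvalues $\lambda_k$ via the spectral representation \eqref{eq.St}. Since $\{\phi_k\}$ is an orthonormal basis diagonalizing both $A$ and $\cS_\eta(t)$, one has for any $\psi \in H$,
\begin{align*}
A^{\rho/2}\cS_\eta(t)\psi = t^{\alpha+\eta-1}\sum_{k=1}^\infty \lambda_k^{\rho/2}E_{\alpha,\alpha+\eta}(-\lambda_k^\beta t^\alpha)\<\psi,\phi_k\>\phi_k,
\end{align*}
so that $\|A^{\rho/2}\cS_\eta(t)\|_{\cL(H)} = t^{\alpha+\eta-1}\sup_{k\ge 1}\lambda_k^{\rho/2}|E_{\alpha,\alpha+\eta}(-\lambda_k^\beta t^\alpha)|$.

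For statement (1), the plan is to apply the decay bound \eqref{upper.ML1} (valid since $-\lambda_k^\beta t^\alpha$ has argument $\pi$). When $\rho\le 0$, $\lambda_k^{\rho/2}\le \lambda_1^{\rho/2}$ is uniformly bounded, giving the $\rho^+=0$ case directly. When $0<\rho\le 2\beta$, I would make the substitution $x_k:=\lambda_k^\beta t^\alpha$ and rewrite
\begin{align*}
\lambda_k^{\rho/2}(1+\lambda_k^\beta t^\alpha)^{-1} = t^{-\alpha\rho/(2\beta)}\,\frac{x_k^{\rho/(2\beta)}}{1+x_k},
\end{align*}
and then use that $\sup_{x\ge 0}x^{\rho/(2\beta)}/(1+x)<\infty$ because the exponent lies in $(0,1]$. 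This yields the claimed factor $t^{-\alpha\rho^+/(2\beta)}$.

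For (2) and (3), the strategy is to write the difference as an integral of the time derivative and apply the identity \eqref{eq.MLder} termwise. In case (3) with $\alpha+\eta\ne 1$, this gives
\begin{align*}
A^{\rho/2}\bigl(\cS_\eta(t)-\cS_\eta(s)\bigr)\psi = \int_s^t u^{\alpha+\eta-2}\sum_{k=1}^\infty \lambda_k^{\rho/2}E_{\alpha,\alpha+\eta-1}(-\lambda_k^\beta u^\alpha)\<\psi,\phi_k\>\phi_k\,\rd u,
\end{align*}
and applying \eqref{upper.ML1} to $E_{\alpha,\alpha+\eta-1}$ together with the same $\rho\le 0$ vs.\ $0<\rho\le 2\beta$ dichotomy used in (1) produces the bound $u^{\alpha+\eta-2-\alpha\rho^+/(2\beta)}$ pointwise inside the integral. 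Case (2) with $\alpha+\eta=1$ is analogous, but the derivative formula gives an extra $\lambda_k^\beta$ factor, so I would invoke the sharper bound \eqref{upper.ML2} on $E_{\alpha,\alpha}$ to compensate. The sup to control is then $\sup_{x\ge 0} x^{1+\rho/(2\beta)}/(1+x)^2$, which is finite precisely when $1+\rho/(2\beta)\in[0,2]$, i.e., $\rho\in[-2\beta,2\beta]$, explaining the symmetric range allowed in (2) (as opposed to the one-sided $\rho\le 2\beta$ in (1) and (3)).

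The main obstacle is essentially bookkeeping: correctly tracking the exponent $1+\rho/(2\beta)$ (resp.\ $\rho/(2\beta)$) against the decay order $2$ (resp.\ $1$) of the Mittag--Leffler function, so as to land on the exact powers of $u$ appearing in the statements. All other steps — interchanging sum and derivative, interchanging sum and $\int_s^t\rd u$, and controlling $\lambda_k^{\rho/2}$ uniformly in $k$ from below for $\rho\le 0$ — are straightforward from absolute convergence of the series for $t>0$ and from $\lambda_1>0$.
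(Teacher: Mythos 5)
Your proposal is correct and follows essentially the same route as the paper: reduce each operator norm to a supremum over $\lambda_k$ via the spectral representation, use the derivative identity \eqref{eq.MLder} to write the differences as $\int_s^t(\cdot)\rd u$, apply \eqref{upper.ML1} for parts (1) and (3) and the sharper \eqref{upper.ML2} for part (2) to absorb the extra $\lambda_k^\beta$, and control the resulting $\sup_{x\ge 0} x^p/(1+x)^q$. In fact you supply slightly more detail than the paper on part (1), which the paper delegates to an external lemma, and your explanation of the symmetric range $\rho\in[-2\beta,2\beta]$ in part (2) via $\sup_{x}x^{1+\rho/(2\beta)}/(1+x)^2$ matches the paper's computation exactly.
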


\begin{proof}
(1) It can be obtained by \cite[Lemma 3.6]{Kang2021IMA} with trivial extensions, so the detail is omitted.

(2) By \eqref{eq.MLder} and \eqref{upper.ML2}, for any $\rho \in [-2\beta, 2\beta]$, one can derive
\begin{align*}
\| A^{\frac{\rho}{2}} ( \cS_{1-\alpha}(t) - \cS_{1-\alpha}(s) ) \|_{\cL(H)}
&= \sup_{k \geq 1} \Big\{ \lambda_k^{\frac{\rho}{2}} | E_{\alpha,1} (-\lambda_k^{\beta} t^{\alpha}) - E_{\alpha,1} (-\lambda_k^{\beta} s^{\alpha}) | \Big\} \\
&= \sup_{k \geq 1} \Big\{ \lambda_k^{\frac{\rho}{2}} \Big| \int_s^t -\lambda_k^{\beta} u^{\alpha-1} E_{\alpha,\alpha} (-\lambda_k^{\beta} u^{\alpha}) \rd u \Big| \Big\} \\
&\leq C \sup_{k \geq 1} \Big\{ \int_s^t u^{-1-\frac{\alpha\rho}{2\beta}} \frac{(\lambda_k^{\beta}u^{\alpha})^{1+\frac{\rho}{2\beta}}}{(1 + \lambda_k^{\beta}u^{\alpha})^2} \rd u \Big\} \\
&\leq C \int_s^t u^{-1-\frac{\alpha\rho}{2\beta}} \rd u.
\end{align*}

(3) When $\rho \in [0, 2\beta]$, it follows from \eqref{eq.MLder} and \eqref{upper.ML1} that
\begin{align*}
\| A^{\frac{\rho}{2}} ( \cS_{\eta}(t) - \cS_{\eta}(s) ) \|_{\cL(H)}
&= \sup_{k \geq 1} \Big\{ \lambda_k^{\frac{\rho}{2}} | t^{\alpha+\eta-1} E_{\alpha,\alpha+\eta} (-\lambda_k^{\beta} t^{\alpha}) - s^{\alpha+\eta-1} E_{\alpha,\alpha+\eta} (-\lambda_k^{\beta} s^{\alpha}) | \Big\} \\
&= \sup_{k \geq 1} \Big\{ \lambda_k^{\frac{\rho}{2}} \Big| \int_s^t u^{\alpha+\eta-2} E_{\alpha,\alpha+\eta-1} (-\lambda_k^{\beta} u^{\alpha}) \rd u \Big| \Big\} \\
&\leq C \sup_{k \geq 1} \Big\{ \int_s^t u^{\alpha+\eta-2-\frac{\alpha\rho}{2\beta}} \frac{ (\lambda_k^{\beta} u^{\alpha})^{\frac{\rho}{2\beta}} }{1 + \lambda_k^{\beta} u^{\alpha}} \rd u \Big\} \\
&\leq C \int_s^t u^{\alpha+\eta-2-\frac{\alpha\rho}{2\beta}} \rd u.
\end{align*}
While for any $\rho < 0$,
\begin{align*}
\| A^{\frac{\rho}{2}} ( \cS_{\eta}(t) - \cS_{\eta}(s) ) \|_{\cL(H)} \leq C \| \cS_{\eta}(t) - \cS_{\eta}(s) \|_{\cL(H)} \leq C \int_s^t u^{\alpha+\eta-2} \rd u.
\end{align*}
Hence, the proof is completed. \qed
\end{proof}

\begin{Lem} \label{le.DiffOpera}
Let $\alpha,\beta,\gamma,\kappa$ be given as in Assumption \ref{ass.Noise} and $\rho \in [-\kappa, \kappa]$. Then there exists some positive constant $C$ independent of $t$ and $s$ such that for any $0 < s < t \leq T$,
\begin{enumerate}
\item[(1)] When $\alpha + \gamma = 1$,
\begin{align*}
\int_0^s \| A^{\frac{\rho}{2}} ( \cS_{1-\alpha}(t-u) - \cS_{1-\alpha}(s-u) ) \|_{\cL(H)}^2 \rd u
\leq C (t-s)^{ 2(\frac{1}{2} - \frac{\alpha\rho}{2\beta}) }.
\end{align*}

\item[(2)] When $\alpha + \gamma \neq 1$ and $\rho = (\alpha+\gamma-1)\frac{2\beta}{\alpha}$,
\begin{align*}
\int_0^s \| A^{\frac{\rho}{2}} ( \cS_{\gamma}(t-u) - \cS_{\gamma}(s-u) ) \|_{\cL(H)}^2 \rd u
\leq C (t-s)^{2\min\{ \alpha+\gamma-\frac{1}{2}, \frac12 \}}.
\end{align*}

\item[(3)] When $\alpha + \gamma \neq 1$ and $\rho \neq (\alpha+\gamma-1)\frac{2\beta}{\alpha}$,
\begin{align*}
&\quad\ \int_0^s \| A^{\frac{\rho}{2}} ( \cS_{\gamma}(t-u) - \cS_{\gamma}(s-u) ) \|_{\cL(H)}^2 \rd u 
\leq C(t-s)^{2\min\{\alpha+\gamma-\frac{1}{2}-\frac{\alpha}{2\beta} \rho^{+}, 1-\varepsilon\}}.
%&\leq
%\begin{cases}
%C (t-s)^{2-\varepsilon}, & \mbox{if } \rho \geq 0 \mbox{ and } \rho = (\alpha+\gamma-\frac{3}{2})\frac{2\beta}{\alpha}, \\
%C (t-s)^{2\min\{\alpha+\gamma-\frac{1}{2}-\frac{\alpha\rho}{2\beta}, 1\}}, & \mbox{if } \rho \geq 0 \mbox{ and } \rho \neq (\alpha+\gamma-\frac{3}{2})\frac{2\beta}{\alpha}, \\
%C (t-s)^{2-\varepsilon}, & \mbox{if } \rho < 0 \mbox{ and } \alpha+\gamma = \frac{3}{2}, \\
%C (t-s)^{2\min\{\alpha+\gamma-\frac{1}{2}, 1\}}, & \mbox{if } \rho < 0 \mbox{ and } \alpha+\gamma \neq \frac{3}{2}.
%\end{cases}
\end{align*}
\end{enumerate}
\end{Lem}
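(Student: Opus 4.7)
\textbf{Proof plan for Lemma \ref{le.DiffOpera}.}
My approach is to combine the pointwise operator estimates of Lemma \ref{lem.Opera} with a splitting of the integration interval into a \emph{near part} and a \emph{far part} relative to the gap $\tau:=t-s$. Without loss of generality I would assume $\tau\le s$ (otherwise $s<\tau$ and the integral over $[0,s]$ can be controlled directly by the triangle inequality and Lemma \ref{lem.Opera}(1), noting that $s^{2(\alpha+\gamma-1/2-\alpha\rho^+/(2\beta))}\le\tau^{2(\alpha+\gamma-1/2-\alpha\rho^+/(2\beta))}$). Then I would write
\begin{align*}
\int_0^s \bigl\| A^{\rho/2}\bigl(\cS_\gamma(t-u)-\cS_\gamma(s-u)\bigr) \bigr\|_{\cL(H)}^2 \rd u = \int_{s-\tau}^s \!\cdots\rd u + \int_0^{s-\tau}\!\cdots\rd u =: \mathrm{I}_{\mathrm{near}}+\mathrm{I}_{\mathrm{far}}.
\end{align*}
On the near interval $[s-\tau,s]$ I would drop the difference via the triangle inequality and bound each of $A^{\rho/2}\cS_\gamma(t-u)$ and $A^{\rho/2}\cS_\gamma(s-u)$ directly by Lemma \ref{lem.Opera}(1), producing an integrand of order $(s-u)^{2(\alpha+\gamma-1)-\alpha\rho^+/\beta}$ (and analogously in $t-u$). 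Integrating over a window of length $\tau$ yields $\mathrm{I}_{\mathrm{near}}\lesssim\tau^{2(\alpha+\gamma-1/2-\alpha\rho^+/(2\beta))}$, where integrability near $u=s$ is guaranteed by $\alpha+\gamma>\frac12$ and the admissible range $\rho\in[-\kappa,\kappa]$ given in Assumption \ref{ass.Noise}.

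On the far interval $[0,s-\tau]$ I would use the sharper difference bounds of Lemma \ref{lem.Opera}(2) for part (1) and of Lemma \ref{lem.Opera}(3) for parts (2) and (3). Schematically, both read
\begin{align*}
\bigl\| A^{\rho/2}\bigl(\cS_\gamma(t-u)-\cS_\gamma(s-u)\bigr) \bigr\|_{\cL(H)} \lesssim \int_{s-u}^{t-u} v^{a}\rd v \lesssim \tau\,(s-u)^{a}
\end{align*}
with $a=-1-\alpha\rho/(2\beta)$ in the critical regime $\alpha+\gamma=1$ (part (1)) and $a=\alpha+\gamma-2-\alpha\rho^+/(2\beta)$ otherwise (parts (2), (3)); the second inequality uses $s-u\ge\tau$ to dominate $v$ on the integration window (reversing the choice of endpoint when the exponent is non-negative). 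Squaring this bound and integrating the resulting $(s-u)^{2a}$ against $\tau^2$ over $[0,s-\tau]$, the antiderivative evaluated at the lower limit $u=s-\tau$ produces $\tau^{2a+1}$, and multiplication by $\tau^2$ gives again $\tau^{2(\alpha+\gamma-1/2-\alpha\rho^+/(2\beta))}$ in parts (1) and (3) (where $2a\neq-1$), matching the near-part contribution.

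Parts (1) and (2) are then distinguished by the exceptional exponents. In part (1), $\alpha+\gamma=1$ replaces $\rho^+$ by the full $\rho$ (Lemma \ref{lem.Opera}(2) being valid for $\rho\in[-2\beta,2\beta]$), which is exactly what yields the sharp exponent $2(\tfrac12-\tfrac{\alpha\rho}{2\beta})$. In part (2), the critical value $\rho=(\alpha+\gamma-1)\frac{2\beta}{\alpha}$ forces $2a=-1$, so the far-part antiderivative produces a logarithmic factor $\log(s/\tau)$; I would absorb this log into the slightly lower effective exponent $\min\{\alpha+\gamma-\tfrac12,\tfrac12\}$, which evaluates to $\tfrac12$ when $\alpha+\gamma>1$ and to $\alpha+\gamma-\tfrac12$ when $\alpha+\gamma<1$. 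In part (3), whenever the generic polynomial exponent would exceed $1$ (i.e., the far-part integrand is integrable without singularity at $u=s-\tau$ and $\mathrm{I}_{\mathrm{far}}\lesssim\tau^2$), I truncate the claimed bound at $1-\varepsilon$ to absorb any residual logarithmic correction.

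The main obstacle I anticipate is the case bookkeeping: one must track separately the sign of $\rho$ (which determines whether $\rho^+=\rho$ or $\rho^+=0$ in Lemma \ref{lem.Opera}(3)), the position of $\alpha+\gamma$ relative to $1$, and whether $\rho$ equals the critical value $(\alpha+\gamma-1)\frac{2\beta}{\alpha}$, then verify in every sub-case that the exponents from $\mathrm{I}_{\mathrm{near}}$ and $\mathrm{I}_{\mathrm{far}}$ match the stated bound, paying particular attention to the cases where $\rho<0$ (so that Lemma \ref{lem.Opera}(3) yields a $\rho$-independent bound that must nevertheless be compared against the claimed $\rho$-dependent exponent through the uniform bound $\|A^{\rho/2}\|_{\cL(H)}\le\lambda_1^{\rho/2}$).
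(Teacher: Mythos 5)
Your overall strategy is genuinely different from the paper's. The paper evaluates the integral in Lemma \ref{lem.Opera}(2)--(3) to a difference of powers and then invokes the external estimate \cite[Lemma 3.2]{DaiXiaoBu2021} on $\int_0^s\bigl((t-u)^b-(s-u)^b\bigr)^2\rd u$, plus the elementary inequality \eqref{inequ.minus1squre} for the borderline exponent $-1$. Your near/far split on $[0,s]=[0,s-\tau]\cup[s-\tau,s]$ with $\tau=t-s$ is a more self-contained route that avoids the citation, and the far-part bookkeeping you sketch (squaring $\tau(s-u)^a$ and integrating over $[0,s-\tau]$) correctly reproduces the exponent $2(\alpha+\gamma-\tfrac12-\tfrac{\alpha\rho^+}{2\beta})$ whenever $2a\ne -1$.

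However, your near-part estimate has a real gap in part (1). You propose to drop the difference by the triangle inequality and bound $A^{\rho/2}\cS_{1-\alpha}(s-u)$ by Lemma \ref{lem.Opera}(1), which produces the exponent $-\tfrac{\alpha\rho^+}{2\beta}$. For $\rho<0$ this yields $\mathrm{I}_{\mathrm{near}}\lesssim\tau$, but the claim of part (1) is the strictly stronger $\tau^{2(\tfrac12-\tfrac{\alpha\rho}{2\beta})}$ with $\rho$, not $\rho^+$, and $2(\tfrac12-\tfrac{\alpha\rho}{2\beta})>1$ when $\rho<0$; the triangle-inequality bound simply cannot see the extra gain from negative $\rho$. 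The remark you append about the uniform bound $\|A^{\rho/2}\|_{\cL(H)}\le\lambda_1^{\rho/2}$ goes the wrong way (it reduces to the $\rho=0$ rate, which is too weak). The correct fix is to apply Lemma \ref{lem.Opera}(2) directly on the near window as well: for $u\in[s-\tau,s]$ one has $s-u\le\tau$ and $t-u\le2\tau$, so $\int_{s-u}^{t-u}v^{-1-\alpha\rho/(2\beta)}\rd v\lesssim\tau^{-\alpha\rho/(2\beta)}$, and squaring and integrating over a window of length $\tau$ recovers exactly $\tau^{1-\alpha\rho/\beta}$. (The same remark applies to your $s<\tau$ fallback, which also uses Lemma \ref{lem.Opera}(1) and therefore also loses the $\rho$-dependence in part (1).) The subtlety here is precisely why the paper proves the sharper, $\rho$-dependent (not $\rho^+$-dependent) difference bound in Lemma \ref{lem.Opera}(2), exploiting the better decay $|E_{\alpha,\alpha}(z)|\lesssim(1+|z|)^{-2}$ available in the case $\alpha+\eta=1$.

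There is also a bookkeeping error in your identification of the logarithmic borderline. In part (2) with $\rho=(\alpha+\gamma-1)\tfrac{2\beta}{\alpha}$ (and $\alpha+\gamma>1$), Lemma \ref{lem.Opera}(3) gives $a=\alpha+\gamma-2-\tfrac{\alpha\rho}{2\beta}=-1$, hence $2a=-2$, not $-1$, and no logarithm appears: $\int_{s-u}^{t-u}v^{-1}\rd v=\ln(1+\tau/(s-u))\le\tau/(s-u)$, and squaring and integrating yields the clean $\tau$. This is important because the exponent $\min\{\alpha+\gamma-\tfrac12,\tfrac12\}$ in part (2) carries no $\varepsilon$ loss, so there is no room to absorb a log there. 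The true borderline $2a=-1$ (i.e.\ $\alpha+\gamma-\tfrac{\alpha\rho^+}{2\beta}=\tfrac32$, reached when $\rho=(\alpha+\gamma-\tfrac32)\tfrac{2\beta}{\alpha}\ge0$ or when $\rho<0$ and $\alpha+\gamma=\tfrac32$) belongs to part (3), and it is precisely the $\min\{\cdot,1-\varepsilon\}$ there that absorbs the resulting $\ln(s/\tau)$. Finally, your assertion that $2a\ne-1$ in part (1) is correct but must be verified: it rests on the observation (made explicitly in the paper) that the definition $\kappa=\min\{(\alpha+\gamma-\tfrac12)\tfrac{2\beta}{\alpha},2\beta\}-\varepsilon_0$ together with $\alpha+\gamma=1$ forces $-\tfrac{\alpha\rho}{2\beta}\in(-\tfrac12,\tfrac12)$ for all $\rho\in[-\kappa,\kappa]$, hence $a=-1-\tfrac{\alpha\rho}{2\beta}\in(-\tfrac32,-\tfrac12)$ stays strictly below $-\tfrac12$.
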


\begin{proof}
To facilitate the proof, we firstly give the estimate
\begin{align} \label{inequ.minus1squre}
\int_0^s \Big| \int_{s}^{t} (v-u)^{-1} \rd v \Big|^2 \rd u
&\leq \int_0^s \int_{s}^{t} (v-u)^{-\frac{1}{2}} \rd v \int_{s}^{t} (v-u)^{-\frac{3}{2}} \rd v \rd u \nonumber\\
&\leq C(t-s)^{\frac{1}{2}} \int_0^s | (t-u)^{-\frac{1}{2}} - (s-u)^{-\frac{1}{2}} | \rd u \nonumber\\
&\leq C(t-s).
\end{align}

(1) Applying Lemma \ref{lem.Opera}(2) and the change of variables shows
\begin{align*}
\int_0^s \| A^{\frac{\rho}{2}} ( \cS_{1-\alpha}(t-u) - \cS_{1-\alpha}(s-u) ) \|_{\cL(H)}^2 \rd u
\leq C \int_0^s \Big| \int_{s-u}^{t-u} v^{-1-\frac{\alpha\rho}{2\beta}} \rd v \Big|^2 \rd u 
= C \int_0^s \Big| \int_{s}^{t} (v-u)^{-1-\frac{\alpha\rho}{2\beta}} \rd v \Big|^2 \rd u.
\end{align*}
Thus, when $\rho = 0$, \eqref{inequ.minus1squre} yields the desired result; while $\rho \in [-\kappa,0) \cup (0,\kappa]$, it follows from \cite[Lemma 3.2]{DaiXiaoBu2021} that
\begin{align*}
\int_0^s \| A^{\frac{\rho}{2}} ( \cS_{1-\alpha}(t-u) - \cS_{1-\alpha}(s-u) ) \|_{\cL(H)}^2 \rd u
\leq C \int_0^s \Big( (t-u)^{-\frac{\alpha\rho}{2\beta}} - (s-u)^{-\frac{\alpha\rho}{2\beta}} \Big)^2 \rd u  
\leq C (t-s)^{1 - \frac{\alpha\rho}{\beta}},
\end{align*}
where the last step also used the fact $-\frac{\alpha\rho}{2\beta} \in (-\frac{1}{2},\frac{1}{2})$. Indeed, it follows from $\alpha + \gamma = 1$ and $\kappa = \min\{ (\alpha+\gamma-\frac{1}{2})\frac{2\beta}{\alpha}, 2\beta \}-\varepsilon_0$ that $-\frac{\alpha\rho}{2\beta} \in (-\frac{1}{2},\frac{1}{2})$, since
\begin{itemize}
\item[$\bullet$] if $\alpha \in (0, \frac{1}{2}]$, then $\kappa = 2\beta-\varepsilon_0$ and $\rho \in (-2\beta, 2\beta)$;

\item[$\bullet$] if $\alpha \in (\frac{1}{2}, 1)$, then $\kappa = \frac{\beta}{\alpha}-\varepsilon_0$ and $\rho \in (-\frac{\beta}{\alpha}, \frac{\beta}{\alpha})$.
\end{itemize}

(2) When $\alpha + \gamma \in (\frac{1}{2}, 1)$, using Lemma \ref{lem.Opera}(3) and \cite[Lemma 3.2]{DaiXiaoBu2021} reads
\begin{align*}
\int_0^s \| A^{\frac{\rho}{2}} ( \cS_{\gamma}(t-u) - \cS_{\gamma}(s-u) ) \|_{\cL(H)}^2 \rd u
\leq C\int_0^s \big( (t-u)^{\alpha+\gamma-1} - (s-u)^{\alpha+\gamma-1} \big)^2 \rd u  
\leq C (t-s)^{2(\alpha+\gamma-\frac{1}{2})}.
\end{align*}
While $\alpha + \gamma \in (1, 2)$, using Lemma \ref{lem.Opera}(3) with $\rho = (\alpha+\gamma-1)\frac{2\beta}{\alpha} > 0$ as well as \eqref{inequ.minus1squre} reveals
\begin{align*}
\int_0^s \| A^{\frac{\rho}{2}} ( \cS_{\gamma}(t-u) - \cS_{\gamma}(s-u) ) \|_{\cL(H)}^2 \rd u
\leq C \int_0^s \Big| \int_{s}^{t} (v-u)^{-1} \rd v \ \Big|^2 \rd u 
\leq C(t-s).
\end{align*}

(3) It follows from Lemma \ref{lem.Opera}(3) and \cite[Lemma 3.2]{DaiXiaoBu2021} that
\begin{align*}
&\quad\ \int_0^s \| A^{\frac{\rho}{2}} ( \cS_{\gamma}(t-u) - \cS_{\gamma}(s-u) ) \|_{\cL(H)}^2 \rd u \\
&\leq C \int_0^s \left( (t-u)^{\alpha+\gamma-1-\frac{\alpha}{2\beta}\rho^{+}} - (s-u)^{\alpha+\gamma-1-\frac{\alpha}{2\beta}\rho^{+}} \right)^2 \rd u \\
&\leq
\begin{cases}
C (t-s)^{2-\varepsilon}, & \mbox{if } \rho \geq 0 \mbox{ and } \rho = (\alpha+\gamma-\frac{3}{2})\frac{2\beta}{\alpha}, \\
C (t-s)^{2\min\{\alpha+\gamma-\frac{1}{2}-\frac{\alpha\rho}{2\beta}, 1\}}, & \mbox{if } \rho \geq 0 \mbox{ and } \rho \neq (\alpha+\gamma-\frac{3}{2})\frac{2\beta}{\alpha}, \\
C (t-s)^{2-\varepsilon}, & \mbox{if } \rho < 0 \mbox{ and } \alpha+\gamma = \frac{3}{2}, \\
C (t-s)^{2\min\{\alpha+\gamma-\frac{1}{2}, 1\}}, & \mbox{if } \rho < 0 \mbox{ and } \alpha+\gamma \neq \frac{3}{2}
\end{cases} \\
&\leq C(t-s)^{2\min\{\alpha+\gamma-\frac{1}{2}-\frac{\alpha}{2\beta} \rho^{+}, 1-\varepsilon\}}. 
\end{align*}
The proof is completed. \qed
\end{proof}

\section{The multiplicative noise case}
\label{sec.pfofwellpose}
Consider the stochastic space-time fractional diffusion equation with multiplicative noise
\begin{align} \label{Model multiplicative}
\partial_t^{\alpha} U(t) + A^{\beta} U(t) = F(U(t)) + \mathcal{I}_t^{\gamma}  \big[ \sigma(U(t)) \dot{W}(t) \big], \qquad \forall\, t \in (0,T] 
\end{align} 
with $U(0) = X_0$, where $\sigma$ satisfies the following assumption.

\begin{Ass}\label{asp:Multi}
Let $\alpha \in (0,1)$, $\beta \in (0,1]$, $\gamma \in [0,1]$ with $\alpha + \gamma > \frac{1}{2}$. Let $\sigma: H \rightarrow \mathcal{L}_2(Q^{\frac12}(H),\dot{H}^{r-\kappa})$ for some $r \in (0,\kappa]$ with $\kappa := \min\{ (\alpha+\gamma-\frac{1}{2})\frac{2\beta}{\alpha}, 2\beta \} - \varepsilon_0$ ($\varepsilon_0 > 0$ is arbitrarily small). Moreover, there exists $L>0$ such that
\begin{align*}
& \big\| A^{\frac{r-\kappa}{2}} \sigma(\phi) Q^{\frac{1}{2}} \big\|_{\cL_2(H)} \leq L( 1 + \|\phi\| ), \qquad\quad\ \ \forall\, \phi \in H, \\
& \big\| \big( \sigma(\phi) - \sigma(\psi) \big) Q^{\frac{1}{2}} \big\|_{\mathcal{L}_2(H)} \leq L\|\phi - \psi\|, \qquad\  \forall\, \phi, \psi \in H. 
\end{align*}
\end{Ass}

Notice that if for all $\phi \in H$, $\sigma(\phi) = I$ is the identity operator on $H$, then Assumption \ref{asp:Multi} is equivalent to Assumption \ref{ass.Noise}.
 Hence Theorem \ref{thm.ExisUniq} is a special version of
the following theorem.

\begin{Theo} [existence and uniqueness] \label{thm.ExisUniq-Multi}
Let $X_0 \in L^p(\Omega,H)$ with some $p \geq 2$, and Assumptions \ref{asp:Multi} and \ref{ass.Lip1} hold. Then Eq.\ \eqref{Model multiplicative} admits a unique mild solution $U \in \cC([0,T], L^p(\Omega,H))$ given by 
\begin{align*} 
U(t) = \cS_{1-\alpha}(t) X_0 + \int_0^t \cS_{0}(t-s) F(U(s)) \rd s + \int_0^t \cS_{\gamma}(t-s) \sigma(U(s)) \rd W(s), \quad t \in [0,T].  
\end{align*}
\end{Theo}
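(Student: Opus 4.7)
The plan is to establish existence and uniqueness via a Picard fixed-point argument in the Banach space $\cC([0,T], L^p(\Omega,H))$, closely mirroring the proof structure of Theorem \ref{thm.ExisUniq} but treating the stochastic convolution $\int_0^t \cS_\gamma(t-s)\sigma(U(s))\rd W(s)$ in place of $\Lambda(t)$. Define the operator $\Phi$ on $\cC([0,T], L^p(\Omega,H))$ by setting $\Phi(U)(t)$ equal to the right-hand side of the mild formulation. The first step is to check that $\Phi$ maps the space into itself. The deterministic drift term is controlled by Lemma \ref{lem.Opera}(1) with $\rho=0, \eta=0$ together with the linear growth in Assumption \ref{ass.Lip1}, exactly as in Proposition \ref{prop.Regu}(1). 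For the initial term, Lemma \ref{lem.Opera}(1) with $\eta = 1-\alpha$ gives $\|\cS_{1-\alpha}(t)\|_{\cL(H)} \leq C$.

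The key new term is the stochastic convolution. By the Burkholder--Davis--Gundy inequality and the factorization $\cS_\gamma(t-s)\sigma(U(s))Q^{\frac12} = A^{(\kappa-r)/2}\cS_\gamma(t-s)\cdot A^{(r-\kappa)/2}\sigma(U(s))Q^{\frac12}$, combined with Lemma \ref{lem.Opera}(1) (valid since $\kappa-r \in [0,2\beta]$) and the growth bound of Assumption \ref{asp:Multi},
\begin{align*}
\Big\|\int_0^t \cS_\gamma(t-s)\sigma(U(s))\rd W(s)\Big\|_{L^p(\Omega,H)}^2
\leq C\int_0^t (t-s)^{2(\alpha+\gamma-1-\frac{\alpha(\kappa-r)}{2\beta})}\bigl(1+\|U(s)\|_{L^p(\Omega,H)}\bigr)^2\rd s.
\end{align*}
The exponent is strictly greater than $-1$ because $\kappa - r \leq \kappa \leq (\alpha+\gamma-\frac12)\frac{2\beta}{\alpha} - \varepsilon_0$, which ensures $\alpha+\gamma-\frac12-\frac{\alpha(\kappa-r)}{2\beta} \geq \frac{\alpha\varepsilon_0}{2\beta}>0$; this is precisely why Assumption \ref{asp:Multi} is tailored this way. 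The time continuity of $\Phi(U)$ in $L^p(\Omega,H)$ follows by the same kind of splitting used in Proposition \ref{prop.Lambda} (increment of the stochastic convolution into the fresh part on $[s,t]$ and the difference of propagators on $[0,s]$, bounded by Lemma \ref{le.DiffOpera}).

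For contraction, given two candidates $U,V \in \cC([0,T], L^p(\Omega,H))$, the Lipschitz conditions on $F$ (from Assumption \ref{ass.Lip1}) and on $\sigma$ (from Assumption \ref{asp:Multi}) yield
\begin{align*}
\|\Phi(U)(t)-\Phi(V)(t)\|_{L^p(\Omega,H)}^2
\leq C\int_0^t \bigl[(t-s)^{2(\alpha-1)} + (t-s)^{2(\alpha+\gamma-1)-\frac{\alpha(\kappa-r)}{\beta}}\bigr]\|U(s)-V(s)\|_{L^p(\Omega,H)}^2\rd s.
\end{align*}
Iterating $\Phi$ sufficiently many times and invoking the singular-type Gr\"onwall inequality (as cited in the proof of Proposition \ref{prop.Regu}) shows that some power $\Phi^N$ is a strict contraction on $\cC([0,T], L^p(\Omega,H))$, yielding a unique fixed point $U$, which is the required mild solution.

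The main obstacle is confirming that the singular kernel appearing in the stochastic convolution estimate is square-integrable near $s=t$; this hinges on the compatibility condition $r \in (0,\kappa]$ in Assumption \ref{asp:Multi}. Once this integrability is secured, the remaining arguments are routine adaptations of the additive noise case in Theorem \ref{thm.ExisUniq}, with the Lipschitz continuity of $\sigma$ (in the Hilbert--Schmidt norm relative to $Q^{\frac12}$) replacing the trivial Lipschitz behaviour of the constant noise coefficient.
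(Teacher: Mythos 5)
Your fixed-point framework is correct and, at the level of the invariance estimate, matches the paper's: the same factorization $\cS_\gamma(t-s)\sigma(U(s))Q^{1/2} = A^{(\kappa-r)/2}\cS_\gamma(t-s)\cdot A^{(r-\kappa)/2}\sigma(U(s))Q^{1/2}$ together with the Burkholder--Davis--Gundy inequality, Lemma \ref{lem.Opera}(1), and the growth bound in Assumption \ref{asp:Multi}, and the same verification (via $\kappa < (\alpha+\gamma-\tfrac12)\tfrac{2\beta}{\alpha}$) that the resulting kernel is square-integrable near the diagonal. Where you diverge from the paper is the fixed-point mechanism itself. You argue that some power $\Phi^N$ is a strict contraction in the unweighted space $\cC([0,T],L^p(\Omega,H))$, which is the classical Picard-iteration route. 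The paper instead equips $\cC([0,T],L^p(\Omega,H))$ with a Bielecki-type weighted norm $\|\phi\|_{\varpi,p} = \sup_t \bigl(\hE\|\phi(t)\|^p / E_{\upsilon,1}(\varpi t^{\upsilon})\bigr)^{1/p}$ built from the Mittag--Leffler function with $\upsilon = \min\{\alpha, 2(\alpha+\gamma-\tfrac12)\}$, and then uses the identity $\frac{\varpi}{\Gamma(\upsilon)}\int_0^t (t-s)^{\upsilon-1}E_{\upsilon,1}(\varpi s^\upsilon)\rd s \le E_{\upsilon,1}(\varpi t^\upsilon)$ to make the operator $\cT$ itself a contraction after choosing $\varpi$ large. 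The weighted-norm approach is a one-step contraction argument tailored to the time-fractional singular kernel; the iterated-power approach requires you to track the product of Beta-function constants $C_0^N\Gamma(\upsilon)^N T^{N\upsilon}/\Gamma(N\upsilon+1)$ and note it tends to zero — both are standard and yield the same conclusion.

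One small but substantive slip: in your contraction inequality you wrote the stochastic-convolution kernel exponent as $2(\alpha+\gamma-1)-\tfrac{\alpha(\kappa-r)}{\beta}$. The Lipschitz bound in Assumption \ref{asp:Multi} is stated in the plain $\cL_2(H)$ norm, i.e.\ $\|(\sigma(\phi)-\sigma(\psi))Q^{1/2}\|_{\cL_2(H)} \le L\|\phi-\psi\|$, with no $A^{(r-\kappa)/2}$ weighting — the weighting only appears in the \emph{growth} bound. So for the contraction estimate you should pair it with $\|\cS_\gamma(t-s)\|_{\cL(H)}^2 \le C(t-s)^{2(\alpha+\gamma-1)}$ directly, yielding exponent $2(\alpha+\gamma-1)$ as in the paper, not the more singular exponent you wrote. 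Your version is still integrable (since $2(\alpha+\gamma-1)-\tfrac{\alpha(\kappa-r)}{\beta}>-1$, as you show in the invariance step), so the argument is not broken, but the exponent is misattributed: it would follow only if the Lipschitz condition also carried the $A^{(r-\kappa)/2}$ factor, which it does not.
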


\begin{proof}
Let $\varpi > 0$ to be determined and denote $\upsilon := \min\{\alpha,2(\alpha+\gamma-\frac12)\}$. Set 
\begin{align*}
\cC([0,T], L^p(\Omega,H))_{\varpi} := \left\{ \phi \in \cC([0,T], L^p(\Omega,H))\, \Big| \, \| \phi \|_{\varpi,p} := \sup_{t\in[0,T]} \left( \frac{\hE \| \phi (t) \|^p}{E_{\upsilon,1}(\varpi t^{\upsilon})} \right)^{1/p} < \infty \right\}.
\end{align*}
Note that 
it is a Banach space, since the weighted norm $\| \cdot \|_{\varpi,p}$ is equivalent to the standard norm of $\cC([0,T], L^p(\Omega,H))$ for the fixed $\varpi > 0$ and $p \geq 2$.
For any $U\in \cC([0,T], L^p(\Omega,H))_{\varpi}$, define 
\begin{align*}
\cT U(t) := \cS_{1-\alpha}(t) X_0 + \int_0^t \cS_{0}(t-s) F(U(s)) \rd s + \int_0^t \cS_{\gamma}(t-s) \sigma(U(s)) \rd W(s),\quad t\in[0,T].
\end{align*}

\textit{Invariance}:\ Let $U \in \cC([0,T], L^p(\Omega,H))_{\varpi}$ be arbitrary. It follows from the Burkholder--Davis--Gundy inequality (see e.g., \cite{DaPrato2014Book}), Lemma \ref{lem.Opera}(1), Assumptions \ref{asp:Multi} and \ref{ass.Lip1} that
\begin{align*}
\| \cT U(t) \|_{L^p(\Omega,H)}
&\leq \| \cS_{1-\alpha}(t) \|_{\cL(H)} \| X_0 \|_{L^p(\Omega,H)} + \int_0^t \| \cS_{0}(t-s) \|_{\cL(H)} \| F(U(s)) \|_{L^p(\Omega,H)} \rd s \\
&\quad + C \Big( \int_0^t \| A^{\frac{\kappa-r}{2}} \cS_{\gamma}(t-s) \|_{\cL(H)}^2 \| A^{\frac{r-\kappa}{2}} \sigma(U(s)) Q^{\frac{1}{2}} \|_{L^p(\Omega,\mathcal{L}_2(H))}^2 \rd s \Big)^{1/2} \\
&\leq C + C \int_0^t (t-s)^{\alpha-1} \rd s \, \Big( 1 + \| U \|_{\cC([0,T], L^p(\Omega,H))} \Big) \\
&\quad + C \Big( \int_0^t (t-s)^{2(\alpha+\gamma-1 - \frac{\alpha}{2\beta}(\kappa-r))} \rd s \Big)^{1/2} \, \Big( 1 + \| U \|_{\cC([0,T], L^p(\Omega,H))} \Big) \\
&\leq C,\qquad \forall\,t\in[0,T],
\end{align*}
which implies $\cT U \in \cC([0,T], L^p(\Omega,H))_{\varpi}$. This means that 
the space $\cC([0,T], L^p(\Omega,H))_{\varpi}$ is invariant under the mapping $\mathcal T$.

\textit{Contraction}:\ Let $U_1, U_2 \in \cC([0,T], L^p(\Omega,H))_{\varpi}$ be arbitrary. According to the Burkholder--Davis--Gundy inequality, Lemma \ref{lem.Opera}(1), Assumptions \ref{asp:Multi} and \ref{ass.Lip1}, and H\"older's inequality, one can read
\begin{align*}
\hE \| \cT U_1(t) - \cT U_2(t) \|^p
&\leq \Big( \int_0^t \| \cS_{0}(t-s) \|_{\cL(H)} \| F(U_1(s)) - F(U_2(s)) \|_{L^p(\Omega,H)} \rd s \Big)^p \\
&\quad + C \Big( \int_0^t \| \cS_{\gamma}(t-s) \|_{\cL(H)}^2 \| \big( \sigma(U_1(s)) - \sigma(U_2(s)) \big) Q^{\frac{1}{2}} \|_{L^p(\Omega,\mathcal{L}_2(H))}^2 \rd s \Big)^{p/2} \\ 
&\leq C \Big( \int_0^t (t-s)^{\alpha-1} \| U_1(s) - U_2(s) \|_{L^p(\Omega,H)} \rd s \Big)^p \\
&\quad + C \Big( \int_0^t (t-s)^{2(\alpha+\gamma-1)} \| U_1(s) - U_2(s) \|_{L^p(\Omega,H)}^2 \rd s \Big)^{p/2} \\ 
&\leq C_0 \int_0^t (t-s)^{\upsilon-1} \| U_1(s) - U_2(s) \|_{L^p(\Omega,H)}^p \rd s. 
\end{align*}
Then, it follows from $\frac{\varpi}{\Gamma(\upsilon)}\int_0^t (t-s)^{\upsilon-1} E_{\upsilon,1}(\varpi s^{\upsilon}) \rd s\le E_{\upsilon,1}(\varpi t^{\upsilon})$ that 
\begin{align*}
\frac{\hE \| \cT U_1(t) - \cT U_2(t) \|^p}{E_{\upsilon,1}(\varpi t^{\upsilon})}
&\leq \frac{C_0}{E_{\upsilon,1}(\varpi t^{\upsilon})} \int_0^t (t-s)^{\upsilon-1} E_{\upsilon,1}(\varpi s^{\upsilon}) \rd s \, \| U_1 - U_2 \|_{\varpi,p}^p \\ 
&\leq \frac{C_0 \Gamma(\upsilon)}{\varpi} \| U_1 - U_2 \|_{\varpi,p}^p,\qquad \forall\,t\in[0,T].
\end{align*}
Since the positive constant $C_0$ is independent of $\varpi$, one can take $\varpi=2^pC_0 \Gamma(\upsilon)$ such that 
$\| \cT U_1 - \cT U_2 \|_{\varpi,p} \leq \frac{1}{2} \| U_1 - U_2 \|_{\varpi,p},$
and the mapping $\mathcal T$ is therefore a contraction on $\cC([0,T], L^p(\Omega,H))_{\varpi}$. 

As a result, the mapping $\mathcal T$ admits a unique fixed point in $\cC([0,T], L^p(\Omega,H))$, so the model \eqref{Model multiplicative} admits a unique mild solution $U \in \cC([0,T], L^p(\Omega,H))$. The proof is completed.
\qed
\end{proof}

\begin{Prop} \label{prop.Regu-Multi}
Let $p \geq 2$, and Assumptions \ref{asp:Multi} and \ref{ass.Lip1} hold.
\begin{enumerate}
\item[(1)] If $X_0 \in L^p(\Omega,\dot{H}^{r})$, then $U \in \cC([0,T], L^p(\Omega,\dot{H}^{r}))$.

\item[(2)] If $X_0 \in L^p(\Omega,\dot{H}^{2\beta})$, then there exists some positive constant $C$ such that
\begin{align*}
\| U(t) - U(s) \|_{L^p(\Omega,H)}
\leq C (t-s)^{ \min\{ \alpha, \frac{\alpha r}{2\beta} + (\gamma-\frac{1}{2})^{+} \} }, \qquad \forall \, 0\leq s < t \leq T.
\end{align*}

\item[(3)] If $X_0 \in L^p(\Omega,\dot{H}^{2\beta})$, then there exists some positive constant $C_{\varepsilon}$ such that
\begin{align*}
\| U(t) - U(s) \|_{L^p(\Omega,H)}
\leq C_{\varepsilon} s^{\varepsilon-\frac{1}{2}} (t-s)^{\min\{ \frac{\alpha r}{2\beta} + (\gamma-\frac{1}{2})^{+}, 1-\varepsilon \}}, \quad\ \ \forall\, 0 < s < t \leq T.
\end{align*}
\end{enumerate}
\end{Prop}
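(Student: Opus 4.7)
The plan is to mirror the arguments of Propositions~\ref{prop.Regu} and \ref{th.sinRegu} from the additive-noise case, adapting the key estimates to accommodate the $U$-dependent diffusion coefficient $\sigma(U)$. The unifying observation is that, since $\cS_\gamma(t-u)$ is a function of $A$, one has the factorization
\begin{align*}
A^{\rho/2}\cS_\gamma(t-u)\sigma(U(u))Q^{1/2} = A^{(\rho+\kappa-r)/2}\cS_\gamma(t-u)\cdot A^{(r-\kappa)/2}\sigma(U(u))Q^{1/2},
\end{align*}
whose Hilbert--Schmidt norm can be bounded by $\|A^{(\rho+\kappa-r)/2}\cS_\gamma(t-u)\|_{\cL(H)}\cdot L(1+\|U(u)\|)$ by virtue of Assumption~\ref{asp:Multi}. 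This effectively plays the role of $\|A^{(r-\kappa)/2}Q^{1/2}\|_{\cL_2(H)}$ in the additive-noise case, so that Lemma~\ref{lem.Opera} and Lemma~\ref{le.DiffOpera} can be invoked verbatim.

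For part (1), the initial and drift terms of the mild formula are handled exactly as in Proposition~\ref{prop.Regu}(1). For the stochastic convolution, I would apply the Burkholder--Davis--Gundy inequality combined with the factorization above (with $\rho=r$) to reduce the $\dot H^r$-norm to $\bigl(\int_0^t (t-u)^{2(\alpha+\gamma-1-\alpha\kappa/(2\beta))}(1+\|U(u)\|_{L^p(\Omega,H)}^2)\,\rd u\bigr)^{1/2}$. The integrability follows because $\alpha+\gamma-\tfrac12-\alpha\kappa/(2\beta)\ge \alpha\varepsilon_0/(2\beta)>0$ by the choice of $\kappa$, while $\sup_{u\in[0,T]}\|U(u)\|_{L^p(\Omega,H)}<\infty$ from Theorem~\ref{thm.ExisUniq-Multi}. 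For part (2), decompose $U(t)-U(s)$ into the three contributions of the mild formula. The initial part is bounded by Lemma~\ref{lem.Opera}(2) using $X_0\in L^p(\Omega,\dot H^{2\beta})$, and the drift part is bounded by $C(t-s)^\alpha$ via \eqref{eq.Way1Holder} and Assumption~\ref{ass.Lip1}. The stochastic part is split as
\begin{align*}
\int_s^t\cS_\gamma(t-u)\sigma(U(u))\rd W(u)+\int_0^s\bigl(\cS_\gamma(t-u)-\cS_\gamma(s-u)\bigr)\sigma(U(u))\rd W(u),
\end{align*}
and each piece is estimated with BDG together with Lemmas~\ref{lem.Opera}(1) and \ref{le.DiffOpera} applied to $A^{(\kappa-r)/2}\cS_\gamma$, following the proof of Proposition~\ref{prop.Lambda}; the uniform bound $\|A^{(r-\kappa)/2}\sigma(U(u))Q^{1/2}\|_{\cL_2(H)}\le L(1+\|U(u)\|_{L^p(\Omega,H)})$ just contributes a harmless multiplicative constant.

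For part (3), I would run the bootstrap of Proposition~\ref{th.sinRegu} verbatim with $G=F\circ U$. The crucial point is that the stochastic-convolution increment bound derived in part (2) has no singular factor in $s$, so only the convolution $\Upsilon_{F\circ U}$ needs to be improved by repeated applications of Proposition~\ref{prop.UpsilonG}: each iteration feeds the current H\"older estimate of $U$ (encoded in the parameters $\mu$ and $\nu$) back into Proposition~\ref{prop.UpsilonG} and raises the exponent by $\alpha$, until the cap $\min\{\alpha r/(2\beta)+(\gamma-\tfrac12)^+,1-\varepsilon\}$ is reached. The main technical obstacle is the stochastic estimate in part (2): one has to combine the operator factorization with the $\dot H^{r-\kappa}$-regularity assumption on $\sigma$ in a way that reproduces precisely the exponents appearing in Proposition~\ref{prop.Lambda}. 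Once this is in place, parts (1) and (3) follow with only cosmetic changes from the additive-noise proofs.
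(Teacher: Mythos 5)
Your proposal is correct and follows essentially the same strategy as the paper: decompose $U(t)$ into $\cS_{1-\alpha}(t)X_0 + \Upsilon_{F\circ U}(t) + \Lambda_{\sigma\circ U}(t)$, handle the first two terms exactly as in Propositions~\ref{prop.Regu} and~\ref{th.sinRegu}, and control the stochastic convolution $\Lambda_{\sigma\circ U}$ by BDG together with the factorization that replaces the fixed factor $\|A^{(r-\kappa)/2}Q^{1/2}\|_{\cL_2(H)}$ of the additive case by the Assumption~\ref{asp:Multi} bound $\|A^{(r-\kappa)/2}\sigma(U(u))Q^{1/2}\|_{\cL_2(H)}\le L(1+\|U(u)\|)$, so that Lemmas~\ref{lem.Opera} and~\ref{le.DiffOpera} apply unchanged. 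The only cosmetic difference is in part (1): the paper closes the estimate by feeding $\|\cdot\|_{L^p(\Omega,H)}\le C\|\cdot\|_{L^p(\Omega,\dot H^r)}$ back into a singular Gr\"onwall inequality, whereas you appeal directly to the a priori bound $\sup_{t\in[0,T]}\|U(t)\|_{L^p(\Omega,H)}<\infty$ from Theorem~\ref{thm.ExisUniq-Multi}; both are valid and lead to the same conclusion.
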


\begin{proof}
For the multiplicative noise case, we redefine the stochastic convolution 
\begin{align*} 
\Lambda_{\sigma\circ U}(t) := \int_0^{t} \cS_{\gamma}(t-s) \sigma(U(s)) \rd W(s), \qquad \forall\, t \in [0,T].
\end{align*}
Then 
$U(t) = \cS_{1-\alpha}(t) X_0 +\Upsilon_{F\circ U}(t) +\Lambda_{\sigma\circ U}(t)$. In terms of the proof of the properties (1)--(3),
 the estimate of $\cS_{1-\alpha}(t) X_0 +\Upsilon_{F\circ U}(t) $ are the same as that of $\cS_{1-\alpha}(t) X_0 +\Upsilon_{F\circ X}(t) $
  in the proofs of Propositions \ref{prop.Regu} and \ref{th.sinRegu}. Hence it suffices to prove the properties (1)--(3) for the special case that $U(t)=\Lambda_{\sigma\circ U}(t)$ (i.e., $X_0=0$ and $F\equiv0$).

By the Burkholder--Davis--Gundy inequality, Lemma \ref{lem.Opera}(1) and Assumption \ref{asp:Multi}, 
\begin{align*}
\| U(t) \|^2_{L^p(\Omega,\dot{H}^{r})}&=\| \Lambda_{\sigma\circ U}(t) \|^2_{L^p(\Omega,\dot{H}^{r})}\\
&\leq  C \int_0^t \| A^{\frac{\kappa}{2}} \cS_{\gamma}(t-s) \|_{\cL(H)}^2 \| A^{\frac{r-\kappa}{2}} \sigma(U(s)) Q^{\frac{1}{2}} \|_{L^p(\Omega,\mathcal{L}_2(H))}^2 \rd s  \\
&\leq C \int_0^t (t-s)^{ 2(\alpha+\gamma-1-\frac{\alpha \kappa}{2\beta}) } \big(1 + \|U(s)\|_{L^p(\Omega,H)}^2\big) \rd s , \qquad \forall\, t \in [0,T], 
\end{align*}
in which the norm $\| \cdot \|_{L^p(\Omega,H)} $ can be further bounded by $C \| \cdot \|_{L^p(\Omega,\dot{H}^{r})}$. 
Thus the singular-type Gr\"onwall's inequality yields the spatial regularity result $U \in \cC([0,T], L^p(\Omega,\dot{H}^{r}))$.

When $U(t)=\Lambda_{\sigma\circ U}(t)$, the properties (2) and (3) can be obtained by the following inequality
\begin{align} \label{eq:GGG} 
\| \Lambda_{\sigma\circ U}(t) - \Lambda_{\sigma\circ U}(s) \|_{L^p(\Omega,H)} 
\leq C (t-s)^{ \min\{ 1-\varepsilon, \frac{\alpha r}{2\beta} + (\gamma-\frac{1}{2})^{+} \} }, \quad\ \ \forall\, 0 < s < t \leq T.
\end{align}
Therefore it suffices to prove \eqref{eq:GGG}. Indeed,
similarly to the proof of Proposition \ref{prop.Lambda}, using the Burkholder--Davis--Gundy inequality, Lemma \ref{lem.Opera}, Assumption \ref{asp:Multi} and Theorem \ref{thm.ExisUniq-Multi} shows 
\begin{align*} 
 \| \Lambda_{\sigma\circ U}(t) - \Lambda_{\sigma\circ U}(s) \|_{L^p(\Omega,H)}^2 
&\leq C \Big\| \int_s^{t} \cS_{\gamma}(t-\tau) \sigma(U(\tau)) \rd W(\tau) \Big\|_{L^p(\Omega,H)}^2 \\
&\quad + C \Big\| \int_0^{s} \big( \cS_{\gamma}(t-\tau) - \cS_{\gamma}(s-\tau) \big) \sigma(U(\tau)) \rd W(\tau) \Big\|_{L^p(\Omega,H)}^2 \\ 
&\leq C \int_{s}^t \big\| \cS_{\gamma}(t-\tau) \sigma(U(\tau)) Q^{\frac{1}{2}} \big\|_{L^p(\Omega,\cL_2(H))}^2 \rd \tau \\
&\quad + C \int_0^{s} \big\| \big( \cS_{\gamma}(t-\tau) - \cS_{\gamma}(s-\tau) \big) \sigma(U(\tau)) Q^{\frac{1}{2}} \big\|_{L^p(\Omega,\cL_2(H))}^2 \rd \tau \\
&\leq C \int_{s}^t \big\| A^{\frac{\kappa-r}{2}} \cS_{\gamma}(t-\tau) \big\|_{\cL(H)}^2 
\big\| A^{\frac{r-\kappa}{2}} \sigma(U(\tau)) Q^{\frac{1}{2}} \big\|_{L^p(\Omega,\cL_2(H))}^2 \rd \tau \\ 
&\quad + C \int_0^{s} \big\| A^{\frac{\kappa-r}{2}} \big( \cS_{\gamma}(t-\tau) - \cS_{\gamma}(s-\tau) \big) \big\|_{\cL(H)}^2\big\| A^{\frac{r-\kappa}{2}} \sigma(U(\tau)) Q^{\frac{1}{2}} \big\|_{L^p(\Omega,\cL_2(H))}^2 \rd \tau \\
&\leq C (t-s)^{2( \alpha + \gamma - \frac{1}{2} - \frac{\alpha}{2\beta} (\kappa- r)) } + 
\begin{cases}
C (t-s)^{2(\frac{1}{2} - \frac{\alpha}{2\beta}(\kappa- r)) }, & \mbox{if } \alpha + \gamma = 1, \\
C (t-s)^{2\min\{\alpha+\gamma-\frac{1}{2}-\frac{\alpha}{2\beta} (\kappa-r), 1-\varepsilon\}}, & \mbox{if } \alpha + \gamma \neq 1
\end{cases}\\
&\leq C (t-s)^{ 2\min\{ \frac{\alpha r}{2\beta} + (\gamma-\frac{1}{2})^{+}, 1-\varepsilon \} }, 
\end{align*}
as required. The proof is completed.
\qed
\end{proof}

\end{appendices}

\noindent \textbf{Funding} This work is supported by National key R\&D Program of China (No.\ 2020YFA0713701), National Natural Science Foundation of China (Nos.\ 11971470, 12031020), and China Postdoctoral Science Foundation (No.\ 2022M713313).

\pdfbookmark[1]{References}{anchor}
\bibliographystyle{siam}
\bibliography{RefBib}

%\pdfbookmark[1]{References}{anchor} % it need \usepackage{hyperref}, Dai added for Bookmark
%\bibliographystyle{spmpsci}
%%\bibliography{RefBib}
%

\end{document}